\documentclass[12pt,a4paper,reqno,twoside]{amsart}

\usepackage[english]{babel}
\usepackage{stmaryrd}
\usepackage{dsfont}
\usepackage[symbol*,ragged]{footmisc}
\usepackage[colorlinks,linkcolor=red,anchorcolor=blue,citecolor=blue,urlcolor=blue]{hyperref}
\usepackage{color,xcolor}

\usepackage{geometry}
\usepackage{amssymb}
\usepackage{amsmath}
\usepackage{mathrsfs}
\usepackage{amsfonts}
\usepackage{epsfig}

\usepackage{amsthm}
\usepackage{amsxtra}
\usepackage{bbding}
\usepackage{epsfig}
\usepackage{graphicx}
\usepackage{latexsym}
\usepackage{mathbbol}
\usepackage{bbold}

\usepackage{pifont}
\usepackage{wasysym}
\usepackage{skull}
\usepackage{float}

\DeclareSymbolFontAlphabet{\mathbb}{AMSb}
\DeclareSymbolFontAlphabet{\mathbbol}{bbold}

\usepackage{amscd}
\usepackage[all]{xy}
\allowdisplaybreaks[4]
\usepackage{setspace}

\geometry{left=2cm,right=2cm,top=2cm,bottom=2cm}

\theoremstyle{plain}
\newtheorem{theorem}{\normalfont\scshape Theorem}[section]
\newtheorem{proposition}{\normalfont\scshape Proposition}[section]
\newtheorem{lemma}[proposition]{\normalfont\scshape Lemma}
\newtheorem{corollary}[theorem]{\normalfont\scshape Corollary}
\newtheorem*{corollary*}{\normalfont\scshape Corollary}
\newtheorem{remark}{\normalfont\scshape Remark}

\theoremstyle{remark}
\newtheorem*{remark*}{\normalfont\scshape Remark}
\newtheorem*{notation}{\normalfont\scshape Notation}

\numberwithin{equation}{section}
\addtocounter{footnote}{1}

\renewcommand{\footnoterule}{
  \kern -3pt
  \hrule width 2.5in height 0.4pt
  \kern 3pt
}

\makeatletter
\@ifundefined{MakeUppercase}{}{}
\makeatother


\begin{document}
	
\title[On Chen's theorem over Piatetski--Shapiro type primes and almost--primes]
	  {On Chen's theorem over Piatetski--Shapiro type primes and almost--primes}

\author[Jinjiang Li, Fei Xue, Min Zhang]
       {Jinjiang Li \quad \& \quad Fei Xue \quad \& \quad Min Zhang }

\address{Department of Mathematics, China University of Mining and Technology,
         Beijing 100083, People's Republic of China}

\email{jinjiang.li.math@gmail.com}


\address{Department of Mathematics, China University of Mining and Technology,
         Beijing 100083, People's Republic of China}

\email{fei.xue.math@gmail.com}
	

\address{(Corresponding author) School of Applied Science, Beijing Information Science and Technology University,
		 Beijing 100192, People's Republic of China  }

\email{min.zhang.math@gmail.com}

\date{}

\footnotetext[1]{Min Zhang is the corresponding author. \\
  \quad\,\,
{\textbf{Keywords}}: Bombieri--Vinogradov theorem; Chen's theorem; Piatetski--Shapiro sequence; almost--prime; exponential sum \\

\quad\,\,
{\textbf{MR(2020) Subject Classification}}: 11L20, 11N05, 11N36, 11P32

}


\begin{abstract}
In this paper, we establish a new mean value theorem of Bombieri--Vinogradov type over Piatetski--Shapiro sequence. Namely, it is proved that for any given constant $A>0$ and any sufficiently small $\varepsilon>0$, there holds
\begin{equation*}
   \sum_{\substack{d\leqslant x^\xi\\ (d,l)=1}}\Bigg|\sum_{\substack{A_1(x)\leqslant a<A_2(x)\\ (a,d)=1}}g(a)
   \Bigg(\sum_{\substack{ap\leqslant x\\ ap\equiv l\!\!\!\!\!\pmod d\\
          ap=[k^{1/\gamma}]}}1-\frac{1}{\varphi(d)}\sum_{\substack{ap\leqslant x\\ ap=[k^{1/\gamma}] }}
          1\Bigg)\Bigg|\ll\frac{x^\gamma}{(\log x)^A},
\end{equation*}
provided that $1\leqslant A_1(x)<A_2(x)\leqslant x^{1-\varepsilon}$ and $g(a)\ll \tau_r^s(a)$, where
$l\not=0$ is a fixed integer and
\begin{equation*}
\xi:=\xi(\gamma)=\frac{2^{38}+17}{38}\gamma-\frac{2^{38}-1}{38}-\varepsilon
\end{equation*}
with \begin{equation*}
  1-\frac{18}{2^{38}+17}<\gamma<1.
\end{equation*}
Moreover, for $\gamma$ satisfying 
\begin{equation*}
1-\frac{0.03208}{2^{38}+17}<\gamma<1,
\end{equation*}
we prove that there exist infinitely many primes $p$ such that $p+2=\mathcal{P}_2$ with $\mathcal{P}_2$ being Piatetski--Shapiro almost--primes of type $\gamma$, and there exist infinitely many Piatetski--Shapiro primes $p$ of type $\gamma$ such that $p+2=\mathcal{P}_2$.
These results generalize the result of Pan and Ding \cite{Pan-Ding-1979} and constitutes an improvement upon a series of previous results of \cite{Li-Zhang-Xue-2022,Lu-2018,Peneva-2003,Wang-Cai-2011}.
\end{abstract}

\maketitle

\section{Introduction and main result}
The ternary Goldbach problem asserts that every odd integer $n\geqslant9$ can be represented in the form
\begin{equation}\label{three-prime}
   n=p_1+p_2+p_3,
\end{equation}
where $p_1,p_2,p_3$ are odd prime numbers. In 1937, Vinogradov \cite{Vinogradov-1937} solved that asymptotic form of the representation of the type (\ref{three-prime}) exists for every sufficiently large odd integer. Recently, Helfgott \cite{Helfgott-1,Helfgott-2,Helfgott-3,Helfgott-4} completely solved the problem and proved that the ternary Goldbach conjecture is true. The binary Goldbach problem, which states that every even integer $N\geqslant6$ can be written as the sum of two odd primes, also remains unsettled. Another central problem in the theory of prime distribution, namely the twin prime conjecture, states that there exist infinitely many primes $p$ such that $p+2$ is also prime. Although the conjecture has resisted all attacks, there have been spectacular partial achievements. One of the well--known results is due to Chen \cite{Chen-1966,Chen-1973}, who proved that there exist infinitely many primes $p$ such that $p+2$ has at most $2$ prime factors.

An important approach for investigating the binary Goldbach problem is by the use of sieve methods, especially for Selberg's sieve method. As usual, we denote by $\mathcal{P}_r$ an almost--prime with at most $r$ prime factors, counted according to multiplicity. In Selberg's sieve method, the estimate of the error term is quite crucial. The mean value theorem can be used to achieve this estimate on average over special arithmetic progressions. Generally speaking, stronger results can be derived if a better improvement can be made on the extension of the `level' of the mean value theorem.

In 1947, by using the large sieve method developed by Linnik, R\'{e}nyi \cite{Renyi-1947} was the first to
prove that the following estimate
\begin{equation}\label{B-V-ini}
 \sum_{d\leqslant x^\xi}\max_{y\leqslant x}\max_{(l,d)=1}\Bigg|
 \sum_{\substack{n\leqslant y\\ n\equiv l\!\!\!\!\!\pmod d}}\Lambda(n)-\frac{y}{\varphi(d)}\Bigg|\ll
 \frac{x}{(\log x)^A}
\end{equation}
holds for $\xi<\xi_0$ with some fixed $\xi_0<1$, where $A>0$ is arbitrary. From (\ref{B-V-ini}), R\'{e}nyi \cite{Renyi-1947} showed that there exists an $r\in\mathbb{N}^+$ such that every sufficiently large even integer $N$ is representable in the form
\begin{equation}\label{binary-Goldbach}
   N=p+\mathcal{P}_r,
\end{equation}
where $p$ is a prime number. However, R\'{e}nyi did not give a quantitative connection between $\xi_0$ and $r$.
Later, Barban \cite{Barban-1961} and Pan \cite{Pan-1963-1,Pan-1964-1} proved that (\ref{B-V-ini}) holds for $\xi_0=1/6$
and $\xi_0=1/3$, independently and respectively. When $\xi_0=1/3$, Pan \cite{Pan-1963-1} firstly gave a quantitative estimate of $r$ and showed that (\ref{binary-Goldbach}) holds for $r=5$. Afterwards, Barban \cite{Barban-1963} and Pan \cite{Pan-1963-2}, independently and  respectively, proved that (\ref{B-V-ini}) holds for $\xi_0=3/8$, from which they both obtained that (\ref{binary-Goldbach}) holds for $r=4$. It is easy to see
that, under the generalized Riemann Hypothesis (GRH), (\ref{B-V-ini}) holds for $\xi_0=1/2$. In 1965, Bombieri
\cite{Bombieri-1965} and Vinogradov \cite{Vinogradov-1965} independently and unconditionally showed that (\ref{B-V-ini}) holds for
$\xi_0=1/2$. To be specific, Bombieri \cite{Bombieri-1965} proved that, for any given $A>0$, there holds
\begin{equation}\label{B-V-1965}
 \sum_{d\leqslant x^{1/2}(\log x)^{-B}}\max_{y\leqslant x}\max_{(l,d)=1}\Bigg|
 \sum_{\substack{n\leqslant y\\ n\equiv l\!\!\!\!\!\pmod d}}\Lambda(n)-\frac{y}{\varphi(d)}\Bigg|\ll
 \frac{x}{(\log x)^A},
\end{equation}
provided that $B\geqslant3A+23$. Afterwards, Gallagher \cite{Gallagher-1968} give a valuable simple proof of
(\ref{B-V-1965}). In 1965, by using the result of Barban \cite{Barban-1963}, i.e. $\xi_0=3/8$, Buchstab  \cite{Buchstab-1965} proved that (\ref{binary-Goldbach}) holds for $r=3$. The  hitherto best result in this direction is due to Chen \cite{Chen-1966,Chen-1973} who showed that (\ref{binary-Goldbach}) holds for $r=2$
by using the `weighted sieve' established by himself and the `switching principle' constructed by himself.
In his famous work, Chen \cite{Chen-1973} had to prove a result which is a varied theorem of Bombieri \cite{Bombieri-1965} on the distribution of primes in arithmetic progressions. Subsequently, Pan and Ding
\cite{Pan-Ding-1975-1,Pan-Ding-1975-2,Pan-Ding-1979} formulate a new mean value theorem which is a generalized
form of Bombieri--Vinogradov theorem that incorporates Chen's case. In \cite{Pan-Ding-1979}, Pan and Ding showed that, for any given $A>0$, if $1\leqslant A_1(x)<A_2(x)\leqslant x^{1-\varepsilon}$, there holds
\begin{equation}\label{Pan-Ding-mean}
\sum_{d\leqslant x^{1/2}(\log x)^{-B}}\max_{y\leqslant x}\max_{(l,d)=1}\Bigg|
\sum_{\substack{A_1(y)\leqslant a<A_2(y)\\ (a,d)=1}}f(a)
\Bigg(\sum_{\substack{an\leqslant y\\ an\equiv l\!\!\!\!\pmod d}}\Lambda(n)-\frac{y}{a\varphi(d)}\Bigg)
\Bigg|\ll\frac{x}{(\log x)^A},
\end{equation}
where $f(a)\ll\tau^k(a)$ and $B\geqslant2A+2^{2k+2}(2^{2k+2}+1)+21$, $\tau(a)$ denotes the Dirichlet divisor function. Here we emphasize that (\ref{B-V-1965}) can be deduced from (\ref{Pan-Ding-mean}) by choosing appropriate
function $f(a)$.

For $1/2<\gamma<1$, the Piatetski--Shapiro sequences are sequences of the form $([n^{1/\gamma}])_{n=1}^\infty$.
Such sequences have been named in honor of Piatetski--Shapiro, who, in \cite{Piatetski-Shapiro-1953},
proved that $([n^{1/\gamma}])_{n=1}^\infty$ contains infinitely many primes for $\frac{11}{12}<\gamma<1$. To be specific, Piatetski--Shapiro showed that, for $\gamma\in(\frac{11}{12},1)$, the counting function
\begin{equation*}
   \pi_\gamma(x):=\#\big\{p\leqslant x: p=[n^{1/\gamma}]\,\,\,\textrm{for some}\,\,\,n\in\mathbb{N}^+\big\}
\end{equation*}
satisfies the asymptotic formula
\begin{equation}\label{PS-asymp}
   \pi_\gamma(x)=\frac{x^\gamma}{\log x}(1+o(1))
\end{equation}
as $x\to\infty$. The range of $\gamma$ in the above asymptotic formula, in which it is
known that $([n^{1/\gamma}])_{n=1}^\infty$ contains infinitely many primes, has been extended many times
over the years due to a number of mathematicians \cite{BHR-1995,Heath-Brown-1983,Jia-1993,Jia-1994,Kolesnik-1972,Kolesnik-1985,Kumchev-1999,
Leitmann-1980,Liu-Rivat-1992,Rivat-1992}. The hitherto best result which makes (\ref{PS-asymp}) work is currently known to hold for all $\gamma\in(\frac{2426}{2817},1)$ thanks to Rivat and Sargos \cite{Rivat-Sargos-2001}. Moreover,
Rivat and Wu \cite{Rivat-Wu-2001} also showed that there exist infinitely many Piatetski--Shapiro primes for
$\gamma\in(\frac{205}{243},1)$ by showing a lower bound of $\pi_\gamma(x)$ with the excepted order of magnitude.

In 1992, Balog and Friedlander \cite{Balog-Friedlander-1992} firstly found an asymptotic formula for the number of solutions of the equation (\ref{three-prime}) with variables restricted to the Piatetski--Shapiro primes. An interesting corollary of their theorem is that every sufficiently large odd integer can be written as the sum of two primes and a Piatetski--Shapiro prime of type $\gamma$, provided that $\gamma\in(\frac{8}{9},1)$. Afterwards, their studies in this direction were subsequently continued by Jia \cite{Jia-1995} and by Kumchev \cite{Kumchev-1997}, and generalized by Cui \cite{Cui-2004} and Li and Zhang \cite{Li-Zhang-2018}, consecutively and respectively.

Based on the above results, it is interesting to investigate the solvability of the equation (\ref{binary-Goldbach}) when $p$ is a Piatetski--Shapiro prime. It is naturally expected that a theorem of Bombieri--Vinogradov type holds for the Piatetski--Shapiro primes. In the early days, the only result in this direction, due to Leitmann \cite{Leitmann-1977}, gives a very low level of distribution which does not allow us to determine the value of the parameter $r$.

In 2003, Peneva \cite{Peneva-2003} obtained a mean value theorem of Bombieri--Vinigradov's type for Piatetski--Shapiro primes. Namely, she proved that, for any given constant $A>0$ and any sufficiently small $\varepsilon>0$, there holds
\begin{equation}\label{Peneva-mean}
   \sum_{\substack{d\leqslant x^\xi\\ (d,l)=1}}\Bigg|\sum_{\substack{p\leqslant x\\ p\equiv l\!\!\!\!\!\pmod d\\
          p=[k^{1/\gamma}]}}1-\frac{1}{\varphi(d)}\pi_\gamma(x)\Bigg|\ll\frac{x^\gamma}{(\log x)^A},
\end{equation}
where $l\not=0$ is a fixed integer and
\begin{equation}\label{level-1}
  \xi=\xi(\gamma)=
  \begin{cases}
    \frac{755}{424}\gamma-\frac{331}{212}-\varepsilon, & \textrm{for
                 $\frac{662}{755}<\gamma\leqslant\frac{608}{675}$},\\
    \frac{5}{4}\gamma-\frac{13}{12}-\varepsilon, & \textrm{for $\frac{608}{675}<\gamma<1$}.
  \end{cases}
\end{equation}
By (\ref{level-1}) and sieve methods, Peneva \cite{Peneva-2003} firstly showed that for every sufficiently large even integer $N$, (\ref{binary-Goldbach}) is solvable with $p=[n^{1/\gamma}]$ a Piatetski--Shapiro prime, and $r$ being the least positive integer satisfying the inequality
\begin{equation*}
  r+1-\frac{\log\frac{4}{1+3^{-r}}}{\log 3}>\frac{1}{\xi(\gamma)}+\varepsilon.
\end{equation*}
Using the above level $\xi$ in (\ref{level-1}), Peneva \cite{Peneva-2003} proved that (\ref{binary-Goldbach}) is solvable for $r=7$ with a Piatetski--Shapiro prime $p=[n^{1/\gamma}]$ and $0.9854<\gamma<1$. Essentially, from the arguments similar to that in Peneva \cite{Peneva-2003}, one can obtain that, there exist infinitely many Piatetski--Shapiro primes of type $\gamma$ such that $p+2=\mathcal{P}_7$ with $0.9854<\gamma<1$.

In 2011, by using the same level $\xi$ in (\ref{level-1}), Wang and Cai \cite{Wang-Cai-2011} improved the result
of Peneva \cite{Peneva-2003}, and showed that there exist infinitely many Piatetski--Shapiro primes of type $\gamma$ such that $p+2=\mathcal{P}_5$ with $\frac{29}{30}<\gamma<1$. Afterwards, Lu \cite{Lu-2018}, in 2018, reestablished a mean value theorem of Bombieri--Vinigradov's type with level $\xi=\xi(\gamma)=(13\gamma-12)/4-\varepsilon$ for $\frac{12}{13}<\gamma<1$. By using this level, Lu \cite{Lu-2018} strengthened the result of Wang and Cai \cite{Wang-Cai-2011}. He proved that there exist infinitely many Piatetski--Shapiro primes of type $\gamma$ such that $p+2=\mathcal{P}_4$ with $0.9993<\gamma<1$. Subsequently,
Li, Zhang and Xue \cite{Li-Zhang-Xue-2022} reconstructed the mean value theorem of Bombieri--Vinigradov's type with level
\begin{equation*}
\xi=\xi(\gamma)=\frac{129}{4}\gamma-\frac{255}{8}-\varepsilon
\end{equation*}
for $\frac{85}{86}<\gamma<1$, by which and the weighted sieve of Richert they proved that there exist infinitely many Piatetski--Shapiro primes of type $\gamma$ such that $p+2=\mathcal{P}_3$ with $0.9989445<\gamma<1$.

In this paper, we shall firstly generalize the result of Pan and Ding \cite{Pan-Ding-1979} and
formulate a new mean value theorem which is a generalized form of Bombieri--Vinogradov's type over Piatetski--Shapiro sequences that incorporates Chen's argument \cite{Chen-1973}. After that, by using Chen's trick, i.e. `Chen's weighted sieve' and `switching principle', we shall establish stronger conclusion and improve the previous result about the topic of shifted prime on Piatetski--Shapiro sequence.

\begin{theorem}\label{Theorem-1}
  Suppose that $l\not=0$ is a fixed integer, and $\gamma$ is a real number satisfying
\begin{equation*}
  1-\frac{18}{2^{38}+17}<\gamma<1.
\end{equation*}
Then for any given constant $A>0$ and any sufficiently small $\varepsilon>0$, there holds
\begin{equation}\label{Thm-1-eq}
   \sum_{\substack{d\leqslant x^\xi\\ (d,l)=1}}\Bigg|\sum_{\substack{A_1(x)\leqslant a<A_2(x)\\ (a,d)=1}}g(a)
   \Bigg(\sum_{\substack{ap\leqslant x\\ ap\equiv l\!\!\!\!\!\pmod d\\
          ap=[k^{1/\gamma}]}}1-\frac{1}{\varphi(d)}\sum_{\substack{ap\leqslant x\\ ap=[k^{1/\gamma}] }}
          1\Bigg)\Bigg|\ll\frac{x^\gamma}{(\log x)^A},
\end{equation}
provided that $1\leqslant A_1(x)<A_2(x)\leqslant x^{1-\varepsilon}$ and $g(a)\ll \tau_r^s(a)$, where
\begin{equation*}
\xi:=\xi(\gamma)=\frac{2^{38}+17}{38}\gamma-\frac{2^{38}-1}{38}-\varepsilon;
\end{equation*}
the implied constant in (\ref{Thm-1-eq}) depends only on $A$ and $\varepsilon$.
\end{theorem}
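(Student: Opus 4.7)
The plan is to follow the standard template for Bombieri--Vinogradov type theorems over Piatetski--Shapiro sequences, beginning with the identity
$$\mathbb{1}_{n=[k^{1/\gamma}]\text{ for some }k} \;=\; [-n^\gamma]-[-(n+1)^\gamma] \;=\; \gamma n^{\gamma-1}+\psi(-(n+1)^\gamma)-\psi(-n^\gamma)+O(n^{\gamma-2}),$$
where $\psi(t)=t-[t]-1/2$. Substituting $n=ap$ into (\ref{Thm-1-eq}), the smooth main term $\gamma(ap)^{\gamma-1}$ can be stripped of its slowly varying factor by partial summation and then absorbed, together with the weight $g(a)\ll\tau_r^s(a)\ll\tau^{rs}(a)$, into the Pan--Ding mean value estimate (\ref{Pan-Ding-mean}) with $f(a)=g(a)$; since $\xi<1/2$ this contribution is already $O(x^\gamma/(\log x)^A)$. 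What remains is the contribution of the two sawtooth terms. After applying Vaaler's Fourier approximation with cutoff $H=x^{1-\gamma}(\log x)^{4A}$, extracting unimodular coefficients $c(d)$ from the outer absolute values, and detecting $ap\equiv l\pmod{d}$ through Dirichlet characters, the problem reduces to bounding bilinear exponential sums of the form
$$\mathcal{S}\;=\;\sum_{d\leqslant x^\xi}c(d)\sum_{\chi\!\!\pmod d}\bar{\chi}(l)\sum_{\substack{A_1\leqslant a<A_2\\ (a,d)=1}}g(a)\,\chi(a)\sum_{1\leqslant h\leqslant H}\frac{1}{h}\sum_{p\leqslant x/a}\chi(p)\,e\!\bigl(h(ap)^\gamma\bigr)$$
with savings of $(\log x)^{-A}$ uniformly in $d$.

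My next step is to apply a Heath--Brown combinatorial identity to the prime variable $p$, which expresses the inner sum, up to $O(\log^C x)$ additive pieces, as a combination of Type I sums (in which one variable runs smoothly over a long interval) and Type II sums (genuine bilinear forms in two variables of comparable length). Type I pieces are handled by applying the van der Corput $B$--process directly to the smooth variable after isolating the character; these are comparatively painless. The decisive pieces are the Type II sums, where the strategy is: first Cauchy--Schwarz to eliminate the character twist and reduce to a diagonal expression in $e\bigl(h((amn)^\gamma-(am n')^\gamma)\bigr)$, then a long iteration of van der Corput's $AB$--procedure. It is this iteration, carried out thirty--eight times, that produces the factor $2^{38}$ in the level
$$\xi \;=\; \frac{2^{38}+17}{38}\gamma-\frac{2^{38}-1}{38}-\varepsilon,$$
while the affine terms $17\gamma/38$ and $(2^{38}-1)/38$ arise from balancing the resulting exponential sum saving against the length of the outer $a$--summation and against the $d^\xi$ loss from summing over moduli, the latter dispatched by the multiplicative large sieve inequality for characters.

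The main obstacle, and exactly what pins down the narrow range $1-18/(2^{38}+17)<\gamma<1$, will be arranging for the Type II estimate to save a factor $(\log x)^{-A}$ uniformly while simultaneously absorbing four separate losses: the $H\asymp x^{1-\gamma}$ loss from Vaaler truncation, the $x^\xi$ loss from the $d$--summation, the loss of $\sqrt{\varphi(d)}$ per character from the large sieve, and the divisor--type weight $g(a)\ll\tau_r^s(a)$, which I would handle by H\"older's inequality against a bounded moment of $\tau$. Requiring all four losses to fit under a single van der Corput bound with $38$ differencings forces $\gamma$ into the stated range. Once the Type II bound is in place, the Type I sums, the harmonic $h$--summation (which contributes only $\log H$), and the final $d$--summation (trivial with the saved power of $\log x$) are routine, and assembling all contributions yields the claimed estimate $x^\gamma/(\log x)^A$.
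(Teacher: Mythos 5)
The broad architecture you lay out (split off the smooth term $\gamma(ap)^{\gamma-1}$ via Pan--Ding, treat the sawtooth contribution with a Fourier cut-off, decompose the prime sum with a Heath--Brown identity into Type~I/II pieces, and invoke exponent pairs) does align with the paper's strategy. However, two of your central claims point in the wrong direction and would derail the actual computation.

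First, you attribute the factor $2^{38}$ to ``a long iteration of van der Corput's $AB$-procedure, carried out thirty-eight times'' in the Type~II bilinear sums, and you call the Type~II estimate ``the decisive piece.'' In the paper it is exactly the opposite: the Type~II estimate uses only the modest exponent pair $A^3(\tfrac12,\tfrac12)=(\tfrac{1}{30},\tfrac{13}{15})$, while the Type~I (smooth) sums are where thirty-six applications of the $A$-process are spent, giving $(\kappa,\ell)=A^{36}(\tfrac12,\tfrac12)=\bigl(\tfrac{1}{2^{38}-2},\,1-\tfrac{37}{2^{38}-2}\bigr)$. The level $\xi=\frac{2^{38}+17}{38}\gamma-\frac{2^{38}-1}{38}$ then drops out of the Balog--Friedlander decomposition condition $1-\mathfrak{a}<\mathfrak{c}/2$ applied to the Type~I threshold $\mathfrak{a}=\frac{\gamma-\ell}{\kappa-\ell+1}-\xi-\eta$ and the Type~II ceiling $\mathfrak{c}=\gamma-\eta$; the Type~II range constraint $X^{\frac{61(1-\gamma)+5\xi}{4}+\eta}\ll M$ is the \emph{easier} side. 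If you follow your own plan and pour $38$ differencings into the Type~II piece, you will not recover this level, because the bottleneck is elsewhere.

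Second, the paper does not detect the congruence $ap\equiv l\pmod d$ via Dirichlet characters, and does not invoke the multiplicative large sieve with a $\sqrt{\varphi(d)}$ loss per character. Instead, after unimodularization of the $d$-sum, the congruence is retained literally and converted into a divisor condition, yielding coefficients of the form $\Xi_{h,a}(n)=\sum_{d\leqslant D,\ d\mid an-l}c(d,h,a)$; the exponent-pair estimate for complete exponential sums in an arithmetic progression (the analogue of Heath--Brown's Lemma~1) is then applied directly to $\sum_{m\equiv \mathfrak{g}\,(\mathrm{mod}\ [d_1,d_2])}e(\lambda m^\gamma)$ after Cauchy--Schwarz. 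That Cauchy--Schwarz, moreover, is taken in the opposite variable to yours: the paper fixes $m$ and diagonalizes the pairs $(n,h)$ via a spacing lemma counting solutions of $|h_1 n_1^\gamma-h_2 n_2^\gamma|\leqslant\Delta$ (Heath--Brown's $\mathscr{N}(\Delta)$ bound). You do not mention any such spacing argument, and without it the Type~II estimate cannot close. Your character-plus-large-sieve route is a different and unsubstantiated path; it may be salvageable, but as written it is not a proof of the theorem, and the heuristic bookkeeping of ``four losses'' you sketch does not by itself explain where the range $1-\tfrac{18}{2^{38}+17}<\gamma<1$ comes from.
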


In Theorem \ref{Theorem-1}, if we take $A_1(x)\equiv1$ and
\begin{equation}\label{g-spe}
g(a)=\begin{cases}
1, & \textrm{if}\,\,a=1,\\
0, & \textrm{if}\,\,a>1.
\end{cases}
\end{equation}
Then (\ref{Thm-1-eq}) becomes
\begin{equation*}
 \sum_{\substack{d\leqslant x^\xi\\ (d,l)=1}}\Bigg|
 \sum_{\substack{p\leqslant x\\ p\equiv l\!\!\!\!\!\pmod d\\ p=[k^{1/\gamma}]}}1
 -\frac{1}{\varphi(d)}\sum_{\substack{p\leqslant x\\ p=[k^{1/\gamma}] }}1
 \Bigg|\ll\frac{x^\gamma}{(\log x)^A},
\end{equation*}
so that Theorem \ref{Theorem-1} is a generalization of (\ref{Peneva-mean}) with its enlarged level $\xi\to\frac{9}{19}=0.473684\dots$ as $\gamma$ tends to $1$. Thus, one can obtain the following corollary.

\begin{corollary}\label{Coro-1}
 Suppose that $l\not=0$ is a fixed integer, and $\gamma$ is a real number satisfying
\begin{equation*}
  1-\frac{18}{2^{38}+17}<\gamma<1.
\end{equation*}
Then for any given constant $A>0$ and any sufficiently small $\varepsilon>0$, there holds
\begin{equation}\label{Coro-1-eq}
 \sum_{\substack{d\leqslant x^\xi\\ (d,l)=1}}\Bigg|
 \sum_{\substack{p\leqslant x\\ p\equiv l\!\!\!\!\!\pmod d\\ p=[k^{1/\gamma}]}}1
 -\frac{1}{\varphi(d)}\sum_{\substack{p\leqslant x\\ p=[k^{1/\gamma}] }}1
 \Bigg|\ll\frac{x^\gamma}{(\log x)^A},
\end{equation}
where
\begin{equation*}
\xi:=\xi(\gamma)=\frac{2^{38}+17}{38}\gamma-\frac{2^{38}-1}{38}-\varepsilon;
\end{equation*}
the implied constant in (\ref{Coro-1-eq}) depends only on $A$ and $\varepsilon$.
\end{corollary}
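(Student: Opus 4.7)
The plan is to derive Corollary \ref{Coro-1} as an immediate specialization of Theorem \ref{Theorem-1}, following exactly the recipe sketched in the paragraph that precedes the corollary statement. First I would set $A_1(x)\equiv 1$ and pick any fixed constant $A_2(x)\equiv C>1$ (for instance $C=2$), so the hypothesis $1\leqslant A_1(x)<A_2(x)\leqslant x^{1-\varepsilon}$ is trivially met for all sufficiently large $x$. With this choice, the inner summation over $a$ in (\ref{Thm-1-eq}) ranges over the single integer $a=1$.

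Next I would take the arithmetic function $g$ defined in (\ref{g-spe}), namely $g(1)=1$ and $g(a)=0$ for $a>1$. The growth condition $g(a)\ll \tau_r^s(a)$ is satisfied trivially because $|g(a)|\leqslant 1\leqslant \tau_r^s(a)$ for every admissible choice of $r,s$. The coprimality condition $(a,d)=1$ is automatic when $a=1$, and the weight $g(a)$ annihilates every term except $a=1$. Consequently, the constraints $ap\leqslant x$, $ap\equiv l\pmod d$, and $ap=[k^{1/\gamma}]$ reduce to $p\leqslant x$, $p\equiv l\pmod d$, and $p=[k^{1/\gamma}]$, respectively, and the double sum on the left of (\ref{Thm-1-eq}) collapses onto the single-variable sum on the left of (\ref{Coro-1-eq}).

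Inserting these specializations into (\ref{Thm-1-eq}) therefore yields (\ref{Coro-1-eq}) directly, with the same level $\xi=\xi(\gamma)$ and the same range of admissible $\gamma$ inherited verbatim from Theorem \ref{Theorem-1}; the implicit constant still depends only on $A$ and $\varepsilon$ because these are the only parameters on which the implied constant in (\ref{Thm-1-eq}) depends. There is no real obstacle to overcome at the level of the corollary: the entire analytic difficulty is concentrated in Theorem \ref{Theorem-1}, where one must handle the general weight $g(a)\ll\tau_r^s(a)$ and the general dyadic range $[A_1(x),A_2(x))$. Once Theorem \ref{Theorem-1} is in hand, the derivation of Corollary \ref{Coro-1} occupies only the few lines above.
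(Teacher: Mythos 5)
Your proposal is correct and follows precisely the route the paper itself sketches in the paragraph preceding the corollary: set $A_1(x)\equiv 1$, take $g$ as in (\ref{g-spe}), and observe that (\ref{Thm-1-eq}) collapses to (\ref{Coro-1-eq}). The only detail you make explicit that the paper leaves implicit is the harmless choice $A_2(x)\equiv 2$, which is needed merely so that the hypothesis $A_1(x)<A_2(x)$ is satisfied; since $g$ vanishes for $a>1$, any such fixed constant works.
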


\begin{remark}
Under exponent pair hypothesis, i.e. $(\varepsilon,\frac{1}{2}+\varepsilon)$ is an exponent pair, it follows from Lemma \ref{Type-II-es}, Lemma \ref{Type-I-es} and Lemma \ref{exponen-fenjie} that
(\ref{Thm-1-eq}) holds for
\begin{equation}\label{EPH-level}
\xi=\xi(\gamma)=\frac{5}{2}\gamma-2-\varepsilon
\end{equation}
with $\frac{4}{5}<\gamma<1$, from which and (\ref{g-spe}) one can obtain (\ref{Peneva-mean}) with `level' defined as in (\ref{EPH-level}) provided that $\frac{4}{5}<\gamma<1$.
\end{remark}

\begin{remark}
 By using the method exactly the same as that during proving Theorem \ref{Theorem-1}, one can establish the following mean value theorem with Piatetski--Shapiro type's shifted product $ap$ as follows
\begin{equation}\label{Thm-1-eq-shifted}
   \sum_{\substack{d\leqslant x^\xi\\ (d,l)=1}}\Bigg|\sum_{\substack{A_1(x)\leqslant a<A_2(x)\\ (a,d)=1}}g(a)
   \Bigg(\sum_{\substack{ap\leqslant x\\ ap\equiv l\!\!\!\!\!\pmod d\\ ap\pm2=[k^{1/\gamma}]}}1
          -\frac{1}{\varphi(d)}\sum_{\substack{ap\leqslant x\\ ap\pm2=[k^{1/\gamma}] }}
          1\Bigg)\Bigg|\ll\frac{x^\gamma}{(\log x)^A},
\end{equation}
where the explicit formula of $\xi=\xi(\gamma)$ and the range of $\gamma$, which makes the (\ref{Thm-1-eq-shifted}) work, are exactly the same as is shown in Theorem \ref{Theorem-1}.
\end{remark}

\begin{theorem}\label{Theorem-2}
Suppose that $\gamma$ is a real number satisfying
\begin{equation*}
1-\frac{0.03208}{2^{38}+17}<\gamma<1.
\end{equation*}
Then there exist infinitely many primes $p$ such that
\begin{equation*}
p+2=\mathcal{P}_2
\end{equation*}
with $\mathcal{P}_2$ being Piatetski--Shapiro almost--primes of type $\gamma$.
\end{theorem}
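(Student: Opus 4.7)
The plan is to apply Chen's weighted sieve, adapted to the Piatetski--Shapiro setting, to the sequence
\[
\mathcal{A}=\bigl\{n\le x:\ n-2\ \text{is prime},\ n=[k^{1/\gamma}]\ \text{for some}\ k\in\mathbb{N}^+\bigr\},
\]
with the goal of showing that a positive proportion of $n\in\mathcal{A}$ satisfy $\Omega(n)\le 2$, which is precisely the conclusion of Theorem \ref{Theorem-2}. Chen's weighting scheme encodes an arithmetic function supported (with non-negative weight) on elements with at most two prime factors; a standard combinatorial argument then shows that a suitable lower bound on a weighted count of the form
\[
W(\mathcal{A};z,y)=S(\mathcal{A},z)-\tfrac{1}{2}\sum_{z\le q<y}S(\mathcal{A}_q,z)+\cdots,
\]
with $z=x^{1/u}$ and $y=x^{1/v}$, suffices to produce the required elements.

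First I would bound $S(\mathcal{A},z)$ from below by the Jurkat--Richert linear sieve, taking the level of distribution $\xi(\gamma)$ from Corollary \ref{Coro-1} (with $l=-2$). The sieve density is the classical $\omega(d)=d/\varphi(d)$ on odd moduli, and all sieve error terms collapse onto the left-hand side of (\ref{Coro-1-eq}); the main term is $\mathfrak{S}\,\gamma\,x^\gamma(\log x)^{-2}\,f(u\xi)$, where $\mathfrak{S}$ is the usual twin--prime singular series and $f$ is the Iwaniec lower-bound sieve function.

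For the subtracted sum $\sum_q S(\mathcal{A}_q,z)$, the sub-range $z\le q<x^{\xi-\varepsilon}$ is handled directly by the linear sieve upper bound with error once again controlled by Corollary \ref{Coro-1}. The critical range $x^\xi\le q<y$ requires the switching principle: writing each $n\in\mathcal{A}_q$ as $n=qm$, one reverses the roles of $q$ and $m$, so that $m$ becomes the outer coefficient variable while $q$ plays the role of the inner prime. This recasts the error as
\[
\sum_{\substack{d\le x^\xi\\(d,2)=1}}\Bigg|\sum_{m}g(m)\Bigg(\sum_{\substack{mq\le x\\ mq\equiv -2\!\!\!\pmod d\\ mq=[k^{1/\gamma}]}}1-\frac{1}{\varphi(d)}\sum_{\substack{mq\le x\\ mq=[k^{1/\gamma}]}}1\Bigg)\Bigg|,
\]
with $g(m)\ll\tau_r^s(m)$ by a standard counting estimate on the switched weights, which is exactly the shape admitted by Theorem \ref{Theorem-1}.

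The main obstacle is the numerical optimisation. Since $\xi(\gamma)\to 9/19=0.4736\ldots$ as $\gamma\to 1$, we are marginally \emph{below} the Bombieri--Vinogradov threshold $1/2$ on which Chen's original $\mathcal{P}_2$ theorem is built, so the sieve parameters $u,v$ and the switching cut-off have to be chosen with great care. Concretely, one must verify a Chen--Richert--type inequality of the shape $F(s_1)-\sum_i \alpha_i\,f(s_i)>0$, where the $s_i$ are determined by $\xi(\gamma)$ together with the logarithmic scales of the various subtracted ranges; it is precisely this single inequality that fixes the extremely narrow window $1-0.03208/(2^{38}+17)<\gamma<1$, and the main computational burden will be to confirm numerically that the constant $0.03208$ is admissible.
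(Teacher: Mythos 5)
Your outline follows essentially the same strategy as the paper: Chen's three-part weighted inequality $\mathds{1}_{\mathcal{P}_2}(a)\geqslant 1-\tfrac12\varrho_1(a)-\tfrac12\varrho_2(a)-\varrho_3(a)$, a linear-sieve lower bound for the main count, linear-sieve upper bounds (after the switching principle) for the subtracted sums, and a final numerical inequality that pins the narrow window for $\gamma$. Two points deserve correction or elaboration.

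First, for the main sieve term $S$ the level of distribution does \emph{not} come from Corollary \ref{Coro-1} with $l=-2$. Corollary \ref{Coro-1} detects primes $p$ that are themselves Piatetski--Shapiro numbers, whereas the sifting sequence here is $\mathscr{A}=\{a\leqslant x:\,a=p+2,\,a=[k^{1/\gamma}]\}$, so the error $\sum_d\bigl|\,|\mathscr{A}_d|-\mathcal{X}/\varphi(d)\,\bigr|$ involves primes $p$ with $p+2=[k^{1/\gamma}]$; this requires the \emph{shifted} variant (\ref{Thm-1-eq-shifted}) of Theorem \ref{Theorem-1} (established in Remark 2 by the same method, but it is a distinct statement).

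Second, and more substantively, treating the switched sums $S_2,S_3$ is not a black-box application of Theorem \ref{Theorem-1}. Once one passes to $\ell=ep-2$ with $e=p_1p_2\in\mathscr{E}$ and sifts the set $\mathscr{B}=\{\ell\in\mathscr{L}:\ell+2=[k^{1/\gamma}]\}$, the error splits as $|\mathscr{B}_d|=\varphi(d)^{-1}\mathcal{X}+\mathscr{R}_d^{(1)}+\mathscr{R}_d^{(2)}$: the smooth-weight piece $\mathscr{R}_d^{(1)}$ is controlled by the Pan--Ding theorem (\ref{Pan-Ding-mean}), but the fractional-part piece $\mathscr{R}_d^{(2)}$ has no $\varphi(d)^{-1}$ correction term and hence is not of the shape of Theorem \ref{Theorem-1}. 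Its treatment (Lemma \ref{R_2-err-mean}) relies on a dedicated combinatorial observation tailored to the three-prime structure of $\ell+2=p_1p_2p_3$: namely, that for $\ell\sim X>x^{1-\eta}$ there is always a subproduct of $p_1p_2p_3$ lying in the Type II window $[X^{45/76+\eta},X^{\gamma_0-\eta}]$, which then feeds into Lemma \ref{Type-II-es}. That verification is a genuine extra step beyond invoking the mean value theorem, and it is also where the range $e\in[x^{0.3955},x^{0.652}]$ and the cutoffs $x^{1/10.92},x^{1/3.29}$ are used; your write-up treats this as routine when it is in fact the place where the argument could fail for a less careful choice of parameters. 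You do correctly identify the $l=2$ congruence and the coefficient condition $g\ll\tau_r^s$ for the $\mathscr{R}_d^{(1)}$ piece, and you correctly flag that the admissibility of the constant $0.03208$ rests on a delicate numerical inequality, here realized as the positivity of the bracketed expression at the end of the paper's computation of $S-\tfrac12 S_1-\tfrac12 S_2-S_3$.
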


\begin{theorem}\label{Theorem-3}
Suppose that $\gamma$ is a real number satisfying
\begin{equation*}
1-\frac{0.03208}{2^{38}+17}<\gamma<1.
\end{equation*}
Then there exist infinitely many Piatetski--Shapiro primes $p$ of type $\gamma$ such that
\begin{equation*}
   p+2=\mathcal{P}_2.
\end{equation*}
\end{theorem}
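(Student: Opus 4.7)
The plan is to apply Chen's weighted sieve to the set
\[
\mathcal{A} := \{\, p + 2 : p \leq x, \; p \text{ prime}, \; p = [k^{1/\gamma}]\,\},
\]
and extract a positive lower bound for the subset whose elements belong to $\mathcal{P}_2$. The starting point is Corollary \ref{Coro-1} applied in the residue class $l = -2$: for any $A > 0$,
\[
\sum_{\substack{d \leq x^\xi \\ (d,2)=1}} \left| |\mathcal{A}_d| - \frac{|\mathcal{A}|}{\varphi(d)} \right| \ll \frac{x^\gamma}{(\log x)^A},
\]
where $|\mathcal{A}_d|$ counts the elements of $\mathcal{A}$ divisible by $d$. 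Here $\xi = \xi(\gamma) \to 9/19$ as $\gamma \to 1^-$, and $|\mathcal{A}| \sim x^\gamma/\log x$; this supplies the level of distribution required by the sieve.

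Next I would introduce Chen's weight with two auxiliary parameters $z = x^\alpha$ and $y = x^\beta$, forming
\[
\Delta(x) := S(\mathcal{A}, z) - \tfrac{1}{2}\sum_{z \leq q < y} S(\mathcal{A}_q, z) - \tfrac{1}{2}\Omega,
\]
where $\Omega$ absorbs the contribution of those $n \in \mathcal{A}$ possessing three prime factors in $[z, y)$. The first two summands are estimated by the Rosser--Iwaniec linear sieve at level $\xi$: a lower bound for $S(\mathcal{A}, z)$ in terms of the sieve function $f$, and an upper bound for each $S(\mathcal{A}_q, z)$ in terms of $F$. In both cases the remainder control comes from Theorem \ref{Theorem-1}, taking $g(a)$ to be the indicator of $a = 1$ for $S(\mathcal{A}, z)$ and of $a = q$ (prime) for the inner sums $S(\mathcal{A}_q, z)$.

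The decisive step is Chen's switching principle, which handles $\Omega$. A bad triple $p + 2 = p_1 p_2 p_3$ with $p_1, p_2 \in [z, y)$ and $p_3 > y$ is rewritten by fixing $a := p_1 p_2$ and treating $p_3$ as the free prime variable, so that $p = a p_3 - 2$ must itself be the Piatetski--Shapiro prime $[k^{1/\gamma}]$. With $g(a)$ chosen as the indicator of $a$ being a product of two primes in the prescribed range (hence $g(a) \ll \tau(a) \ll \tau_r^s(a)$), the sum defining $\Omega$ falls precisely into the framework of the shifted mean-value inequality (\ref{Thm-1-eq-shifted}) recorded in the Remark following Theorem \ref{Theorem-1}, and therefore $\Omega \ll x^\gamma (\log x)^{-A}$.

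Combining the three ingredients, I would verify that $\Delta(x) \gg x^\gamma (\log x)^{-2}$ for a suitable choice of $\alpha, \beta$; since every $p$ contributing positively to $\Delta$ is a Piatetski--Shapiro prime with $p + 2 \in \mathcal{P}_2$, the conclusion follows by letting $x \to \infty$. The main obstacle is the numerical positivity of the leading constant in $\Delta(x)$: because $\xi(\gamma)$ falls short of $\tfrac{1}{2}$, one must carefully track the linear-sieve functions to confirm that $\xi(\gamma)$ exceeds the threshold required for Chen's argument to yield $\mathcal{P}_2$ rather than a larger almost-prime. The strengthened constraint $\gamma > 1 - 0.03208/(2^{38}+17)$ (as opposed to the larger range $\gamma > 1 - 18/(2^{38}+17)$ in Theorem \ref{Theorem-1}) is calibrated precisely so that this positivity survives; the verification is intricate but introduces no genuinely new difficulty beyond Theorem \ref{Theorem-1} itself and its shifted counterpart.
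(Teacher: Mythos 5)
Your proposal is correct and follows the same strategy as the paper: Chen's weighted sieve applied to the set $\mathcal{A}=\{p+2:p\leqslant x,\;p=[k^{1/\gamma}]\}$, with the main sieve's level of distribution supplied by Corollary~\ref{Coro-1} (for $S(\mathcal{A},z)$ and the $S(\mathcal{A}_q,z)$) and the remainder of the switched sets controlled by the shifted estimate (\ref{Thm-1-eq-shifted}) with $g$ the indicator of the admissible products $p_1p_2$. That is exactly what the Remark after Theorem~\ref{Theorem-3} says: the argument for Theorem~\ref{Theorem-2} carries over with Corollary~\ref{Coro-1} replacing the shifted version in the main sieve, and vice versa for the switched part.

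Two small remarks on precision, neither of which is a genuine gap. First, your shorthand $\Delta(x)=S(\mathcal{A},z)-\tfrac12\sum_q S(\mathcal{A}_q,z)-\tfrac12\Omega$ does not literally match the weight $1-\tfrac12\varrho_1-\tfrac12\varrho_2-\varrho_3$ used in the paper, where $\varrho_2$ and $\varrho_3$ are the two distinct switched configurations (one with a factor in $[z,y)$ and two factors $\geqslant y$, the other with all three factors $\geqslant y$); your $\Omega$ would have to encode $\varrho_2+2\varrho_3$ for the coefficients to agree. Second, you invoke (\ref{Thm-1-eq-shifted}) directly to dispose of the switched remainders, whereas the paper proves specialized lemmas (Lemmas~\ref{R_1-err-mean} and~\ref{R_2-err-mean}, and their obvious analogues for Theorem~\ref{Theorem-3}) that split the remainder $r_d$ into an arithmetic-progression part (handled by the Pan--Ding theorem (\ref{Pan-Ding-mean})) and a $\psi$-correction part (handled by exploiting the explicit trilinear structure $\ell=p_1p_2p_3-2$ of the switched set to produce a factor already in the Type~II range, bypassing the Heath--Brown decomposition altogether). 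This is essentially the internal machinery of (\ref{Thm-1-eq-shifted}) rerun in a simpler special case, so your citation is legitimate and the conclusion unaffected. Your identification of the numerical positivity of the sieve constant — forcing the much narrower $\gamma$-window — as the decisive step is exactly right.
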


\begin{remark}
 The process of the proof of Theorem \ref{Theorem-3} is essentially the same as that of Theorem \ref{Theorem-2} except replacing Theorem \ref{Theorem-1} with Corollary \ref{Coro-1}.  Therefore, we only give the details of the proof of Theorem \ref{Theorem-2} and omit the proof of Theorem \ref{Theorem-3}.
\end{remark}

\begin{remark}
 The key point of improving the number $r$ such that $p+2=\mathcal{P}_r$ with Piatetski--Shapiro prime or almost--prime is to enlarge the level $\xi=\xi(\gamma)$, for $\gamma$ near to $1$, of the mean value theorem of Bombieri--Vinigradov's type over Piatetski--Shapiro sequence. In order to compare the result in this paper with the previous results (e.g., see the literatures \cite{Peneva-2003,Lu-2018,Li-Zhang-Xue-2022}), we list the numerical result as follows:
\begin{align*}
  \xi(\gamma)=&\,\, \frac{5}{4}\gamma-\frac{13}{12}-\varepsilon\to\frac{1}{6}=0.1666\dots, \quad\textrm{for $\gamma\to1$};    \\
  \xi(\gamma)=&\,\, \frac{13\gamma-12}{4}-\varepsilon\to \frac{1}{4}=0.25, \qquad\quad\,\, \textrm{for $\gamma\to1$};   \\
  \xi(\gamma)=&\,\, \frac{129}{4}\gamma-\frac{255}{8}-\varepsilon\to\frac{3}{8}=0.375, \quad\,\,\,\textrm{for $\gamma\to1$};   \\
  \xi(\gamma)=&\,\, \frac{2^{38}+17}{38}\gamma-\frac{2^{38}-1}{38}-\varepsilon\to \frac{9}{19}=0.473684\dots,
  \quad\textrm{for $\gamma\to1$};   \\
  \xi(\gamma)=&\,\, \frac{5}{2}\gamma-2-\varepsilon\to\frac{1}{2}=0.5,\quad\,\,\,
  \textrm{for $\gamma\to1$. \quad (under exponent pair hypothesis)}
\end{align*}
\end{remark}

\begin{notation}
 Throughout this paper, $x$ is a sufficiently large number; $\varepsilon$ and $\eta$ are sufficiently small positive numbers, which may be different in each occurrences. Let $p$ and $q$, with or without subscripts, always denote a prime number. We use $[x],\,\{x\}$ and $\|x\|$ to denote the integral part of $x$, the fractional part of $x$ and the distance from $x$ to the nearest integer, respectively.
Denote by $\mathcal{P}_r$ an almost--prime with at most $r$ prime factors, counted according to multiplicity.
As usual, $\varphi(n),\Lambda(n),\tau_r(n)$ and $\mu(n)$ denote Euler's function, von Mangoldt's function, the Dirichlet divisor function of dimension $r$, and M\"{o}bius' function, respectively. Let $(m_1,m_2,\dots,m_k)$ and $[m_1,m_2,\dots,m_k]$ be the greatest common divisor and the least common multiple of $m_1,m_2,\dots,m_k$, respectively. We write $L=\log x$; $e(t)=\exp(2\pi it)$; $\psi(t)=t-[t]-\frac{1}{2}$. The notation $n\sim X$ means that $n$ runs through a subinterval
of $(X,2X]$, whose endpoints are not necessarily the same in the different occurrences and may depend on the outer summation variables. $f(x)\ll g(x)$ means that $f(x)=O(g(x))$; $f(x)\asymp g(x)$ means that $f(x)\ll g(x)\ll f(x)$.
\end{notation}

\section{Preliminaries}\label{Preliminary}
In this section, we shall reduce the problem of estimating the sum in (\ref{Thm-1-eq}) to estimating exponential sums over primes.

For $1/2<\gamma<1$, it is easy to see that
\begin{equation*}
  [-k^\gamma]-[-(k+1)^\gamma]=
  \begin{cases}
    1, & \textrm{if\,\,$k=[m^{1/\gamma}]$\,\, for some $m\in\mathbb{N}^+$},\\
    0, & \textrm{otherwise}.
  \end{cases}
\end{equation*}
For convenience, we put $D=x^\xi$. In order to prove (\ref{Thm-1-eq}), we only need to show that
\begin{align*}
 & \,\,\sum_{\substack{d\leqslant D\\ (d,l)=1}}\Bigg|\sum_{\substack{A_1(x)\leqslant a<A_2(x)\\ (a,d)=1}}g(a)
       \Bigg(\sum_{\substack{an\leqslant x\\ an\equiv l\!\!\!\!\!\pmod d}}\Lambda(n)
       \big([-(an)^\gamma]-[-(an+1)^\gamma]\big)
                \nonumber \\
 & \,\, \qquad \qquad\qquad\qquad \qquad\qquad -\frac{1}{\varphi(d)}\sum_{an\leqslant x}
       \Lambda(n)\big([-(an)^\gamma]-[-(an+1)^\gamma]\big)
          \Bigg)\Bigg|\ll\frac{x^\gamma}{(\log x)^A}.
\end{align*}
Therefore, it is sufficient to prove that
\begin{align}\label{suffi-1}
 & \,\, \sum_{\substack{d\leqslant D\\ (d,l)=1}}\Bigg|\sum_{\substack{A_1(x)\leqslant a<A_2(x)\\ (a,d)=1}}g(a)
        \Bigg(\sum_{\substack{an\leqslant x\\ an\equiv l\!\!\!\!\!\pmod d}}\Lambda(n)
        \big((an+1)^\gamma-(an)^\gamma\big)
                \nonumber \\
 & \,\, \qquad\qquad\qquad\qquad\qquad\qquad -\frac{1}{\varphi(d)}\sum_{an\leqslant x}
        \Lambda(n)\big((an+1)^\gamma-(an)^\gamma\big)\Bigg)\Bigg|\ll\frac{x^\gamma}{(\log x)^A},
\end{align}
\begin{align}\label{suffi-2}
\sum_{\substack{d\leqslant D\\ (d,l)=1}}\Bigg|\sum_{\substack{A_1(x)\leqslant a<A_2(x)\\ (a,d)=1}}g(a)
\sum_{\substack{an\leqslant x\\ an\equiv l\!\!\!\!\!\pmod d}}\Lambda(n)
\big(\psi\big(-(an)^\gamma\big)-\psi\big(-(an+1)^\gamma\big)\big)\Bigg|\ll\frac{x^\gamma}{(\log x)^A},
\end{align}
and
\begin{align}\label{suffi-3}
\sum_{\substack{d\leqslant D\\ (d,l)=1}}\frac{1}{\varphi(d)}
\Bigg|\sum_{\substack{A_1(x)\leqslant a<A_2(x)\\ (a,d)=1}}g(a)\sum_{an\leqslant x}\Lambda(n)
\big(\psi\big(-(an)^\gamma\big)-\psi\big(-(an+1)^\gamma\big)\big)\Bigg|\ll\frac{x^\gamma}{(\log x)^A}.
\end{align}
The estimate (\ref{suffi-1}) can be obtained from the new mean value theorem of Bombieri--Vinogradov type, which was established by Pan and Ding \cite{Pan-Ding-1975-1,Pan-Ding-1975-2,Pan-Ding-1979}, by using partial summation to get rid of the smooth weighted function $f(t)=(t+1)^\gamma-t^\gamma$ and it holds for every $\gamma\in(1/2,1)$ and $D=x^{1/2-\varepsilon}$, where $\varepsilon>0$ is sufficiently small. Hence, we only have to show (\ref{suffi-2})
and (\ref{suffi-3}). First, we shall give the proof of (\ref{suffi-2}) in details, and (\ref{suffi-3}) can be treated similarly by following the processes of the proof of (\ref{suffi-2}). Clearly, (\ref{suffi-2}) will follow, if we can prove that for $X\leqslant x$, there holds
\begin{equation}\label{suffi-4}
\sum_{\substack{d\leqslant D\\ (d,l)=1}}\Bigg|\sum_{\substack{A_1(x)\leqslant a<A_2(x)\\ (a,d)=1}}g(a)
\sum_{\substack{an\sim X\\ an\equiv l\!\!\!\!\!\pmod d}}\Lambda(n)
\big(\psi\big(-(an)^\gamma\big)-\psi\big(-(an+1)^\gamma\big)\big)\Bigg|\ll\frac{x^\gamma}{(\log x)^A},
\end{equation}
Let $\eta>0$ be a sufficiently small number. If $X\leqslant x^{1-\eta}$, then the left--hand side of (\ref{suffi-4}) is
\begin{align*}
\ll & \,\, \sum_{\substack{d\leqslant D\\ (d,l)=1}}\Bigg|\sum_{\substack{A_1(x)\leqslant a<A_2(x)\\ (a,d)=1}}g(a)
\sum_{\substack{an\sim X\\ an\equiv l\!\!\!\!\!\pmod d}}\Lambda(n)
           \big((an+1)^\gamma-(an)^\gamma\big)\Bigg|
                   \nonumber \\
   & \,\,  +\sum_{\substack{d\leqslant D\\ (d,l)=1}}
           \Bigg|\sum_{\substack{A_1(x)\leqslant a<A_2(x)\\ (a,d)=1}}g(a)
           \sum_{\substack{an\sim X\\ an\equiv l\!\!\!\!\!\pmod d}}
           \Lambda(n)\big([-(an)^\gamma]-[-(an+1)^\gamma]\big)\Bigg|
                     \nonumber \\
\ll & \,\, L\sum_{A_1(x)\leqslant a<A_2(x)}\tau_r^s(a)\sum_{an\sim X}(an)^{\gamma-1}\tau(|an-l|)+L
           \sum_{A_1(x)\leqslant a<A_2(x)}\tau_r^s(a)
           \sum_{\substack{an\sim X\\ an=[k^{1/\gamma}]}}\tau(|an-l|)
                     \nonumber \\
\ll & \,\, X^{\frac{\eta}{3}}\sum_{b\sim X}b^{\gamma-1}\tau(b)+X^{\frac{\eta}{3}}
           \sum_{\substack{b\sim X\\ b=[k^{1/\gamma}]}}\tau(b)\ll X^{\gamma+\frac{\eta}{2}}\ll x^\gamma L^{-A}.
\end{align*}
Hence, we can assume that $x^{1-\eta}\leqslant X\leqslant x$. It is easy to see that, for $\xi\leqslant(1-\eta)/2$, there holds
\begin{equation*}
 X^\xi\leqslant D\leqslant X^{\xi+\frac{\eta}{2}}.
\end{equation*}
Now, we use the well--known expansions (e.g., see the arguments on page 245 of Heath--Brown \cite{Heath-Brown-1983})
\begin{equation}\label{psi-expan}
\psi(t)=-\sum_{0<|h|\leqslant H}\frac{e(th)}{2\pi i h}+O(E(t,H)),
\end{equation}
where
\begin{equation*}
E(t,H):=\min\bigg(1,\frac{1}{H\|t\|}\bigg)=\sum_{h=-\infty}^{\infty}b_he(th)
\end{equation*}
and
\begin{equation*}
b_h\ll\min\bigg(\frac{\log2H}{H},\frac{1}{|h|},\frac{H}{|h|^2}\bigg).
\end{equation*}
Inserting (\ref{psi-expan}) into the left--hand side of (\ref{suffi-4}), the contribution of the error term in   (\ref{psi-expan}) to the left--hand side of (\ref{suffi-4}) is
\begin{equation}\label{tri-fenjie}
\sum_{\substack{d\leqslant D\\ (d,l)=1}}\sum_{\substack{A_1(x)\leqslant a<A_2(x)\\ (a,d)=1}}g(a)
\sum_{\substack{an\sim X\\ an\equiv l\!\!\!\!\!\pmod d}}\Lambda(n)
\big(E((an)^\gamma,H)+E((an+1)^\gamma,H)\big)=E_1+E_2,
\end{equation}
say. We treat $E_1$ only, since the estimate of $E_2$ is exactly the same. For $E_1$, we have
\begin{align}\label{E_1-upper}
            E_1
 \ll & \,\, L\sum_{\substack{d\leqslant D\\ (d,l)=1}}
            \sum_{\substack{A_1(x)\leqslant a<A_2(x)\\ (a,d)=1}}\tau_r^s(a)
            \sum_{\substack{an\sim X\\ an\equiv l\!\!\!\!\!\pmod d}}E((an)^\gamma,H)
                  \nonumber \\
 \ll & \,\, X^{\eta/10}\sum_{\substack{d\leqslant D\\ (d,l)=1}}
            \sum_{\substack{m\sim X\\ m\equiv l\!\!\!\!\!\pmod d}}
            \Bigg(\sum_{\substack{m=an\\ A_1(x)\leqslant a<A_2(x)\\ (a,d)=1}}\Bigg)E(m^\gamma,H)
                  \nonumber \\
 \ll & \,\, X^{\eta/9}\sum_{\substack{d\leqslant D\\ (d,l)=1}}
            \sum_{\substack{m\sim X\\ m\equiv l\!\!\!\!\!\pmod d}}
            \sum_{h=-\infty}^\infty b_he(hm^\gamma)
                  \nonumber \\
 \ll & \,\, X^{\eta/9}\sum_{\substack{d\leqslant D\\ (d,l)=1}}\sum_{h=-\infty}^\infty|b_h|
            \Bigg|\sum_{\substack{m\sim X\\ m\equiv l\!\!\!\!\!\pmod d}}e(hm^\gamma)\Bigg|.
\end{align}
Now, we use the following estimate, which is an analogue of Lemma 1 of Heath--Brown \cite{Heath-Brown-1983} for arithmetic progressions, to give the upper bound estimate of the innermost sum in (\ref{E_1-upper}).
\begin{lemma}\label{ex-arith-pair}
  Let $1\leqslant d\leqslant X,\, X<X_1\leqslant 2X$. Then
\begin{equation*}
\sum_{\substack{X<m\leqslant X_1\\ m\equiv l\!\!\!\!\!\pmod d}}e(hm^\gamma)
\ll \min\Big(Xd^{-1}, d^{-1}|h|^{-1}X^{1-\gamma}+d^{\kappa-\ell}|h|^\kappa X^{\kappa\gamma-\kappa+\ell}\Big),
\end{equation*}
where $(\kappa,\ell)$ is an exponent pair.
\end{lemma}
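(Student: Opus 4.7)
\textbf{Proof plan for Lemma \ref{ex-arith-pair}.}

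The trivial bound $|S| \ll Xd^{-1}$ (where $S$ denotes the sum to be estimated) is immediate: the arithmetic progression $(X, X_1]\cap (l + d\mathbb{Z})$ contains at most $\lfloor (X_1-X)/d\rfloor + 1 \ll X/d$ terms, each of modulus one. For the non-trivial bound, my plan is to reparametrize the residue class by $m = l + dn$, reducing the problem to a sum over consecutive integers, and then to apply Kusmin--Landau together with the exponent pair machinery in exactly the style of Heath--Brown's Lemma~1 in \cite{Heath-Brown-1983}.

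After the change of variable, the sum becomes $S = \sum_{n \in I} e(g(n))$ with $g(t) := h(l + dt)^\gamma$, where $I$ is an interval of length $\asymp X/d$ on which $l + dt \asymp X$. The first derivative $g'(t) = h\gamma d(l+dt)^{\gamma-1}$ is monotonic and of size $|g'(t)| \asymp |h| d X^{\gamma - 1}$; all higher derivatives are similarly monotonic and of the correct magnitude, so the standard exponent pair formalism applies with constants depending only on $\gamma$. I would then split by the size of $|g'|$. When $|h|dX^{\gamma - 1}$ is smaller than a sufficiently small absolute constant, $\|g'(n)\| = |g'(n)|$ throughout $I$, and Kusmin--Landau's inequality yields
\[
|S| \ll \frac{1}{|h| d X^{\gamma-1}} = d^{-1}|h|^{-1} X^{1 - \gamma}.
\]
In the complementary regime, applying the exponent pair $(\kappa, \ell)$ with length parameter $N = X/d$ and derivative magnitude $F/N = |h|dX^{\gamma-1}$ (hence $F = |h| X^\gamma$) gives
\[
|S| \ll (F/N)^\kappa N^\ell = \big(|h|dX^{\gamma-1}\big)^\kappa (X/d)^\ell = d^{\kappa - \ell}|h|^\kappa X^{\kappa\gamma - \kappa + \ell}.
\]
The sum of the two regime bounds dominates $|S|$ in every case, and combining with the trivial estimate produces the required minimum.

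The only routine technical step is verifying the higher-derivative monotonicity and size conditions underlying the exponent pair formalism; since $g$ is essentially a pure power function after an affine change of variable, these conditions hold uniformly in $d$, $h$, $l$ with implicit constants depending only on $\gamma$. No real obstacle is expected, as this lemma is the arithmetic-progression analogue of a standard van der Corput estimate and its proof parallels that of Heath--Brown's Lemma~1 directly, with the factor $d$ appearing cleanly through the linear substitution $m = l+dn$.
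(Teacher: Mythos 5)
Your proof is correct. Note that this paper does not itself prove the lemma but simply cites Lemma~2.1 of Li, Zhang and Xue \cite{Li-Zhang-Xue-2022}; your argument---substituting $m=l'+dn$ (with $l'$ the least element of the progression exceeding $X$) to reduce to a length-$\asymp X/d$ sum with phase $g(t)=h(l'+dt)^\gamma$, then applying Kusmin--Landau when $|g'|\asymp|h|dX^{\gamma-1}$ is bounded below $1/2$ and the exponent pair $(\kappa,\ell)$ in the complementary regime---is exactly the standard arithmetic-progression extension of Heath--Brown's Lemma~1, with the factor $d$ entering cleanly through the linear change of variable, and is the approach one would expect in the cited source.
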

\begin{proof}
See Lemma 2.1 of Li, Zhang and Xue \cite{Li-Zhang-Xue-2022}.
\end{proof}

Taking $(\kappa,\ell)=(\frac{1}{2},\frac{1}{2})$ in Lemma \ref{ex-arith-pair}, we obtain
\begin{align*}
  E_1 \ll & \,\, X^{\eta/9}\sum_{d\leqslant D}
                 \bigg(|b_0|Xd^{-1}+\sum_{h\not=0}|b_h|\Big(|h|^{-1}X^{1-\gamma}d^{-1}+
                 |h|^{1/2}X^{\gamma/2}\Big)\bigg)
                         \nonumber \\
  \ll & \,\,X^{1+\eta/8}H^{-1}+X^{1-\gamma+\eta/9}\sum_{d\leqslant D}d^{-1}\sum_{h\not=0}|h|^{-2}
                         \nonumber \\
      & \,\,+X^{\gamma/2+\eta/9}D\Bigg(\sum_{0<|h|\leqslant H}|h|^{-1/2}+H\sum_{|h|>H}|h|^{-3/2}\Bigg)
                           \nonumber \\
 \ll & \,\, X^{1+\eta/8}H^{-1}+X^{1-\gamma+\eta/8}+X^{\gamma/2+\eta/8}H^{1/2}D\ll x^\gamma L^{-A},
\end{align*}
provided that
\begin{equation}\label{suffi-condi-1}
 H=X^{1-\gamma+\eta}\qquad \textrm{and} \qquad \gamma>\frac{1}{2}+\xi.
\end{equation}
Therefore, it remains to show that
\begin{align}\label{suffi-S-condi}
 S:= & \,\, \sum_{\substack{d\leqslant D\\ (d,l)=1}}\sum_{0<h\leqslant H}\frac{1}{h}
            \sum_{\substack{A_1(x)\leqslant a<A_2(x)\\ (a,d)=1}}\tau_r^s(a)
                  \nonumber \\
 & \,\, \qquad \times\Bigg|\sum_{\substack{an\sim X\\ an\equiv l\!\!\!\!\!\pmod d}}
 \Lambda(n)\Big(e\big(-h(an)^\gamma\big)-e\big(-h(an+1)^\gamma\big)\Big)\Bigg|\ll x^\gamma L^{-A}.
\end{align}
Define
\begin{equation*}
   \phi_{h,a}(n)=1-e\big(h((an)^\gamma-(an+1)^\gamma)\big).
\end{equation*}
By partial summation, the innermost sum on the left--hand side of (\ref{suffi-S-condi}) is
\begin{align}\label{inner-sum-upper}
     & \,\, \sum_{\substack{an\sim X\\ an\equiv l\!\!\!\!\!\pmod d}}\Lambda(n)e(-h(an)^\gamma)\phi_{h,a}(n)
                \nonumber \\
   = & \,\, \int_{\frac{X}{a}}^{\frac{2X}{a}}\phi_{h,a}(t)\mathrm{d}
            \Bigg(\sum_{\substack{X/a<n\leqslant t \\ an\equiv l\!\!\!\!\!\pmod d}}
            \Lambda(n)e(-h(an)^\gamma)\Bigg)
                \nonumber \\
 \ll &\,\, \bigg|\phi_{h,a}\bigg(\frac{2X}{a}\bigg)\bigg|
           \Bigg|\sum_{\substack{an\sim X\\ an\equiv l\!\!\!\!\!\pmod d}}\Lambda(n)e(-h(an)^\gamma)\Bigg|
                \nonumber \\
     &\,\, +\int_{\frac{X}{a}}^{\frac{2X}{a}}
            \Bigg|\sum_{\substack{X/a<n\leqslant t \\ an\equiv l\!\!\!\!\!\pmod d}}
            \Lambda(n)e(-h(an)^\gamma)\Bigg|\bigg|\frac{\partial\phi_{h,a}(t)}{\partial t}\bigg|\mathrm{d}t
                \nonumber \\
 \ll &\,\, hX^{\gamma-1}\times\max_{X/a<t\leqslant2X/a}
           \Bigg|\sum_{\substack{X/a<n\leqslant t \\ an\equiv l\!\!\!\!\!\pmod d}}
           \Lambda(n)e(-h(an)^\gamma)\Bigg|,
\end{align}
where we use the estimate
\begin{equation*}
   \phi_{h,a}(t)\ll ha^{\gamma-1}t^{\gamma-1}\qquad \textrm{and}\qquad\,
   \frac{\partial\phi_{h,a}(t)}{\partial t}\ll ha^{\gamma-1}t^{\gamma-2}.
\end{equation*}
Inserting (\ref{inner-sum-upper}) into the left--hand side of (\ref{suffi-S-condi}), we obtain
\begin{align*}
  S \ll & \,\, X^{\gamma-1}\times\sum_{\substack{d\leqslant D\\ (d,l)=1}}\sum_{0<h\leqslant H}
               \sum_{\substack{A_1(x)\leqslant a<A_2(x)\\ (a,d)=1}}\tau_r^s(a)
               \Bigg|\sum_{\substack{an\sim X\\ an\equiv l\!\!\!\!\!\pmod d}}\Lambda(n)e(-h(an)^\gamma)\Bigg|
                    \nonumber \\
      = & \,\, X^{\gamma-1}\times\sum_{\substack{d\leqslant D\\ (d,l)=1}}\sum_{0<h\leqslant H}
               \sum_{\substack{A_1(x)\leqslant a<A_2(x)\\ (a,d)=1}}\tau_r^s(a)\cdot c(d,h,a)
           \sum_{\substack{an\sim X\\ an\equiv l\!\!\!\!\!\pmod d}}\Lambda(n)e(-h(an)^\gamma)
                    \nonumber \\
    \ll & \,\, X^{\gamma-1}\times\sum_{A_1(x)\leqslant a<A_2(x)}\tau_r^s(a)
               \sum_{an\sim X}\Lambda(n)\sum_{0<h\leqslant H}e(-h(an)^\gamma)
               \sum_{\substack{d\leqslant D\\ (d,l)=1\\ d|an-l}}c(d,h,a)
                    \nonumber \\
      = & \,\, X^{\gamma-1}\times\sum_{A_1(x)\leqslant a<A_2(x)}\tau_r^s(a)\sum_{an\sim X}\Lambda(n)G_a(n),
\end{align*}
where
\begin{equation*}
  G_a(n)=\sum_{0<h\leqslant H}\Xi_{h,a}(n)e(-h(an)^\gamma)
\end{equation*}
and
\begin{equation*}
  \Xi_{h,a}(n)=\sum_{\substack{d\leqslant D\\ (d,l)=1\\ d|an-l}}c(d,h,a), \qquad |c(d,h,a)|=1.
\end{equation*}
Accordingly, in order to establish the estimate (\ref{suffi-S-condi}), it is sufficient to show that
\begin{equation}\label{suffi-genera}
  \Bigg|\sum_{A_1(x)\leqslant a<A_2(x)}\tau_r^s(a)\sum_{an\sim X}\Lambda(n)G_a(n)\Bigg|\ll XL^{-A}.
\end{equation}
A special case of the identity of Heath--Brown \cite{Heath-Brown-1982} is given by
\begin{equation*}
 -\frac{\zeta'}{\zeta}=-\frac{\zeta'}{\zeta}(1-Z\zeta)^3-\sum_{j=1}^3\binom{3}{j}(-1)^jZ^j\zeta^{j-1}(-\zeta'),
\end{equation*}
where $Z=Z(s)=\sum\limits_{k\leqslant (X/a)^{1/3}}\mu(k)k^{-s}$. From this we can decompose $\Lambda(n)$ for
$n\sim X/a$ as
\begin{equation*}
 \Lambda(n)=\sum_{j=1}^3\binom{3}{j}(-1)^{j-1}\sum_{k_1\dots k_{2j}=n}\mu(k_1)\cdots\mu(k_j)\log k_{2j}.
\end{equation*}
Thus, we can express $\sum\limits_{n\sim X/a}\Lambda(n)G_a(n)$ in terms of sums
\begin{equation*}
 \mathop{\sum\,\,\dots\,\,\sum}_{\substack{k_1\cdots k_{2j}\sim X/a\\ k_i\sim K_i}}\mu(k_1)\cdots\mu(k_j)
 (\log k_{2j})G_a(k_1\cdots k_{2j}),
\end{equation*}
where $1\leqslant j\leqslant3,\, K_1K_2\cdots K_{2j}\sim X/a$ and $K_1,\dots,K_j\leqslant (2X/a)^{1/3}$. By dividing the $K_j$ into two groups, we have
\begin{equation}\label{expo-fenjie}
 \Bigg|\sum_{A_1(x)\leqslant a<A_2(x)}\!\!\tau_r^s(a)\sum_{n\sim X/a}\Lambda(n)G_a(n)\Bigg|\ll_\eta X^\eta\max\Bigg|\sum_{A_1(x)\leqslant a<A_2(x)}\!\!\tau_r^s(a)
 \mathop{\sum\sum}_{\substack{kn\sim X/a \\ k\sim K}}\alpha(k)\beta(n)G_a(kn)\Bigg|,
\end{equation}
where the maximum is taken over all bilinear forms with coefficients satisfying one of
\begin{equation}\label{type-II-coeff}
   |\alpha(k)|\leqslant 1,\qquad \qquad |\beta(n)|\leqslant1,
\end{equation}
or
\begin{equation*}
   |\alpha(k)|\leqslant 1,\qquad \qquad \beta(n)=1,
\end{equation*}
or
\begin{equation*}
   |\alpha(k)|\leqslant 1,\qquad \qquad \beta(n)=\log n,
\end{equation*}
and also satisfying in all cases
\begin{equation}\label{gene-coeff-condi}
  K\leqslant X/a.
\end{equation}
We refer to the case (\ref{type-II-coeff}) as being Type II sums and to the other cases as being Type I sums and write for brevity $\Sigma_{II}$ and $\Sigma_{I}$, respectively. In the following two sections, we shall give appropriate upper bound estimates for the sums of Type II and Type I, respectively.

\section{Estimate of Type II Sums}\label{se-type-II}
We begin by putting the two variables $a$ and $k$ together, i.e. $m=ak$, and breaking up the ranges for $m$ and $h$ into intervals $(M,2M]$ and $(J,2J]$ so that $MN\asymp X$ and $\frac{1}{2}\leqslant J\leqslant H$. Then, for the Type II sums $\Sigma_{II}$, there holds
\begin{align*}
          \Sigma_{II}
 = & \,\, \mathop{\sum_{m\sim M}\sum_{n\sim N}}_{MN\sim X}
          \Bigg(\sum_{\substack{m=ak\\ A_1(x)\leqslant a<A_2(x)\\ k\sim K}}\tau_r^s(a)\alpha(k)\Bigg)\beta(n)
          \sum_{h\sim J}\Phi_h(mn)e\big(-h(mn)^\gamma\big)
                \nonumber \\
\ll & \,\, X^\eta\sum_{m\sim M}\Bigg|\sum_{\substack{n\sim N\\ mn\sim X}}
           \sum_{h\sim J}\beta(n)\Phi_h(mn)e(-h(mn)^\gamma)\Bigg|,
\end{align*}
where
\begin{equation*}
\Phi_h(mn)=\sum_{\substack{d\leqslant D\\ (d,l)=1\\ d|mn-l}}c'(d,h),\qquad |c'(d,h)|=1.
\end{equation*}
Denote by $T$ a parameter, which will be chosen later. We decompose the collection of available pairs $(n,h)$ into sets $\mathscr{S}_t$, for $1\leqslant t\leqslant T$, defined by
\begin{equation*}
  \mathscr{S}_t=\bigg\{(n,h):\,\,n\sim N,\, h\sim J, \,\frac{4JN^\gamma(t-1)}{T}<hn^\gamma\leqslant
    \frac{4JN^\gamma t}{T}\bigg\}.
\end{equation*}
Hence one has
\begin{equation*}
  \Sigma_{II}\ll X^\eta\sum_{1\leqslant t\leqslant T}\sum_{m\sim M}
  \Bigg|\mathop{\sum\sum}_{\substack{(n,h)\in\mathscr{S}_t\\ mn\sim X}}
  \beta(n)\Phi_h(mn)e(-h(mn)^\gamma)\Bigg|,
\end{equation*}
which combined with Cauchy's inequality yields
\begin{align*}
    & \,\, |\Sigma_{II}|^2 \ll X^\eta TM\sum_{1\leqslant t\leqslant T}\sum_{m\sim M}
           \Bigg|\mathop{\sum\sum}_{\substack{(n,h)\in\mathscr{S}_t\\ mn\sim X}}
           \beta(n)\Phi_h(mn)e(-h(mn)^\gamma)\Bigg|^2
                     \nonumber \\
\ll & \,\, X^\eta TM\sum_{1\leqslant t\leqslant T}\mathop{\sum\sum}_{(n_1,h_1)\in\mathscr{S}_t}
           \mathop{\sum\sum}_{(n_2,h_2)\in\mathscr{S}_t}\Bigg|\sum_{\substack{m\sim M\\ mn_1\sim X\\ mn_2\sim X}}
           \Phi_{h_1}(mn_1)\Phi_{h_2}(mn_2)e\big(\big(h_1n_1^\gamma-h_2n_2^\gamma\big)m^\gamma\big)\Bigg|
                     \nonumber \\
\ll & \,\, X^\eta TM\mathop{\sum_{n_1\sim N}\sum_{n_2\sim N}
           \sum_{h_1\sim J}\sum_{h_2\sim J}}_{|\lambda|\leqslant4JN^\gamma T^{-1}}
           \Bigg|\sum_{\substack{m\sim M\\ mn_1\sim X\\ mn_2\sim X}}\Phi_{h_1}(mn_1)\Phi_{h_2}(mn_2)
           e(\lambda m^\gamma)\Bigg|,
\end{align*}
where
\begin{equation*}
  \lambda=h_1n_1^\gamma-h_2n_2^\gamma.
\end{equation*}
Denote by $\mathcal{S}$ the innermost sum over $m$. By the definition of the quantity $\Phi_h(\cdot)$, we have
\begin{equation*}
 \mathcal{S}=\sum_{\substack{m\sim M\\ mn_1\sim X\\ mn_2\sim X}}
 \sum_{\substack{d_1\leqslant D\\ (d_1,l)=1\\ d_1|mn_1-l}}c'(d_1,h_1)
 \sum_{\substack{d_2\leqslant D\\ (d_2,l)=1\\ d_2|mn_2-l}}c'(d_2,h_2)e(\lambda m^\gamma).
\end{equation*}
If the system of the congruences
\begin{equation*}
\begin{cases}
  mn_1\equiv l\!\!\!\pmod {d_1} \\
  mn_2\equiv l\!\!\!\pmod {d_2}
\end{cases}
\end{equation*}
is not solvable, then $\mathcal{S}=0$. If the above system is solvable, then there exists some positive
integer $\mathfrak{g}=\mathfrak{g}(d_1,d_2,l,n_1,n_2)$ with $1\leqslant\mathfrak{g}\leqslant[d_1,d_2]$ such that the system is equivalent to $m\equiv\mathfrak{g}\!\pmod {[d_1,d_2]}$. In this case, we change the order of summation of $\mathcal{S}$ to derive that
\begin{equation*}
\mathcal{S}=\sum_{\substack{d_1\leqslant D\\ (d_1,l)=1}}c'(d_1,h_1)
\sum_{\substack{d_2\leqslant D\\ (d_2,l)=1}}c'(d_2,h_2)
\sum_{\substack{m\sim M\\ mn_1\sim X\\ mn_2\sim X\\ m\equiv\mathfrak{g}\!\!\!\!\!\pmod {[d_1,d_2]}}}
e\big(\lambda m^\gamma\big).
\end{equation*}
Therefore, by Lemma \ref{ex-arith-pair}, we deduce that for any exponent pair $(\kappa,\ell)$ there holds
\begin{align*}
              \mathcal{S}
 \ll & \,\, \sum_{\substack{d_1\leqslant D\\ (d_1,l)=1}}\sum_{\substack{d_2\leqslant D\\ (d_2,l)=1}}
   \Bigg|\sum_{\substack{m\sim M\\ mn_1\sim X\\ mn_2\sim X\\ m\equiv\mathfrak{g}\!\!\!\!\!\pmod {[d_1,d_2]}}}
            e(\lambda m^\gamma)\Bigg|
                 \nonumber \\
 \ll & \,\, \sum_{\substack{d_1\leqslant D\\ (d_1,l)=1}}\sum_{\substack{d_2\leqslant D\\ (d_2,l)=1}}
              \min\bigg(\frac{M}{[d_1,d_2]},\frac{M^{1-\gamma}}{|\lambda|[d_1,d_2]}+|\lambda|^{\kappa}
              [d_1,d_2]^{\kappa-\ell}M^{\kappa\gamma-\kappa+\ell}\bigg).
\end{align*}
In order to compute the contribution of the term
$|\lambda|^{\kappa}[d_1,d_2]^{\kappa-\ell}M^{\kappa\gamma-\kappa+\ell}$ to $|\Sigma_{II}|^2$, we need the following Lemma.
\begin{lemma}\label{fenduan-es}
For  $\frac{1}{2}<\alpha<1,\,J\geqslant1,\,N\geqslant1,\,\Delta>0$, let $\mathscr{N}(\Delta)$ denote the number of solutions of
the following inequality
\begin{equation*}
 \big|h_1n_1^\alpha-h_2n_2^\alpha\big|\leqslant\Delta,\qquad h_1,h_2\sim J,\quad n_1,n_2\sim N.
\end{equation*}
Then we have
\begin{equation*}
 \mathscr{N}(\Delta)\ll \Delta JN^{2-\alpha}+JN\log(JN).
\end{equation*}
\end{lemma}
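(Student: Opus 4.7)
My plan is to establish this counting bound by fixing three of the four variables and controlling the admissible values of the fourth via the mean value theorem. Specifically, for fixed $h_1, h_2 \sim J$ and $n_1 \sim N$, the constraint $|h_1 n_1^\alpha - h_2 n_2^\alpha|\leq\Delta$ confines $n_2^\alpha$ to an interval of length $2\Delta/h_2$ centred at $(h_1/h_2)n_1^\alpha \asymp N^\alpha$; by the mean value theorem applied to $t\mapsto t^{1/\alpha}$ on $t\asymp N^\alpha$, the corresponding range of $n_2$ is an interval of length $\ll \Delta N^{1-\alpha}/h_2$, and hence contains at most $1+O(\Delta N^{1-\alpha}/h_2)$ integers in $(N,2N]$.

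Summing the $\Delta N^{1-\alpha}/h_2$ contribution over $(h_1,n_1,h_2)\sim(J,N,J)$ and using the dyadic identity $\sum_{h_2\sim J}h_2^{-1}\asymp 1$ yields the first main term $\Delta JN^{2-\alpha}$. The subtle point is to bound the ``$+1$'' contribution: the naive total is $J^2 N$, and additional structure is required to shave it down to $JN\log(JN)$. My plan is to split the $h_2$-sum according to whether $h_2\leq 2\Delta N^{1-\alpha}$, in which case the $n_2$-interval has length at least $1/2$ and the ``$+1$'' is already dominated by $\Delta N^{1-\alpha}/h_2$ (and the resulting contribution is subsumed in the first main term, observing that $J\leq 2\Delta N^{1-\alpha}$ forces $J^2N\leq 2\Delta JN^{2-\alpha}$), or $h_2>2\Delta N^{1-\alpha}$, in which case the interval has length less than one and admits at most one integer $n_2$.

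In the latter (long) regime, the existence of such an admissible $n_2$ amounts to the Diophantine condition that the fractional part $\{n_1(h_1/h_2)^{1/\alpha}\}$ lies in a prescribed window of width $\Delta N^{1-\alpha}/h_2$. I would estimate the number of triples $(h_1,n_1,h_2)$ satisfying this condition by an Erd\H{o}s--Tur\'an/Vaaler reduction of the indicator of the short window to a truncated Fourier expansion, whereupon the resulting bilinear exponential sums in the smooth amplitude $n_1(h_1/h_2)^{1/\alpha}$ are estimated by the exponent pair $(1/2,1/2)$ (analogously to Lemma~\ref{ex-arith-pair}). A careful accounting of the dyadic sum $\sum_{h_2>\Delta N^{1-\alpha}}h_2^{-1}\ll \log(JN)$ then delivers the target bound $JN\log(JN)$.

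The principal obstacle is precisely this last step: extracting a genuine gain from the Diophantine information in the long regime. One must exploit the non-degeneracy of the derivatives of $n_1(h_1/h_2)^{1/\alpha}$ in each of the three parameters $h_1,n_1,h_2$ in order not to forfeit an otherwise unavoidable factor of $J$. A nearly identical counting lemma is proved (with essentially the same strategy) in Li, Zhang and Xue~\cite{Li-Zhang-Xue-2022}, whose argument I would follow here.
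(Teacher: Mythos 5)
The paper does not give a proof of this lemma; it simply refers to pp.\ 256--257 of Heath--Brown \cite{Heath-Brown-1983}, where the spacing estimate is established. Your proposal, by contrast, is an attempt at a proof from scratch, so there is no ``paper argument'' to compare against line by line; I will comment on the internal soundness of your sketch.

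The first two stages are correct. Fixing $(h_1,h_2,n_1)$ and counting $n_2$ yields $O(1+\Delta N^{1-\alpha}/h_2)$ admissible values, and summing the second term over $h_1\sim J$, $n_1\sim N$, $h_2\sim J$ (using that $h_2\sim J$ is a dyadic block so $\sum_{h_2\sim J}h_2^{-1}\asymp 1$) gives $\Delta JN^{2-\alpha}$. The dichotomy $h_2\le 2\Delta N^{1-\alpha}$ versus $h_2>2\Delta N^{1-\alpha}$ is also clean, since $h_2\sim J$ forces essentially all of $(J,2J]$ to lie on one side, and in the short regime the trivial ``$+1$'' bound $J^2N$ is indeed subsumed by $\Delta JN^{2-\alpha}$.

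The crucial part is the long regime, and there your write-up is only a sketch with genuine gaps. Two specific issues. First, the $\log(JN)$ you need does \emph{not} come from ``the dyadic sum $\sum_{h_2>\Delta N^{1-\alpha}}h_2^{-1}$'': since $h_2\sim J$, that sum is $\asymp 1$. If your Erd\H{o}s--Tur\'an/Vaaler route is to produce a logarithm at all, it must come from the harmonic sum $\sum_{k\le K}k^{-1}\asymp\log K$ over the Fourier modes retained in the truncated expansion of the short indicator window, and one must then verify that the truncation $K$ can be chosen so that $\log K\ll\log(JN)$ while the truncation error $N/K$ stays $\ll\Delta N^{2-\alpha}/h_2$ uniformly. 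Second, and more seriously, after the Fourier expansion the inner sum over $n_1$ is a linear geometric sum $\ll\min(N,\|k(h_1/h_2)^{1/\alpha}\|^{-1})$ (no exponent pair is involved in the $n_1$-variable), and you must then control
\begin{equation*}
\sum_{h_1,h_2\sim J}\sum_{k\le K}\frac{1}{k}\,\min\Big(N,\big\|k(h_1/h_2)^{1/\alpha}\big\|^{-1}\Big).
\end{equation*}
The diagonal $h_1=h_2$ alone contributes $JN\sum_{k\le K}k^{-1}\asymp JN\log K$, which is already of the target size; the near-diagonal $|h_1-h_2|\ll J/N$ and, more generally, the pairs for which $(h_1/h_2)^{1/\alpha}$ lies near a rational with small denominator give additional contributions whose size is a Diophantine question. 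Nothing in your proposal addresses how the $(1/2,1/2)$ exponent pair or ``non-degeneracy of derivatives'' actually closes off these contributions; ``analogously to Lemma~\ref{ex-arith-pair}'' is a gesture, not an argument, since Lemma~\ref{ex-arith-pair} estimates a one-dimensional exponential sum and does not directly control a weighted count of $\|\,\cdot\,\|^{-1}$. Finally, the reference you invoke for the missing step (Li, Zhang and Xue \cite{Li-Zhang-Xue-2022}) is, like the present paper, a user of this lemma rather than an independent source of its proof; it also points back to Heath--Brown. So the proposal as written leaves the decisive step of the argument unproved; what can be checked is correct, but the essential difficulty (beating the $J^2N$ barrier) is acknowledged rather than resolved.
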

\begin{proof}
 See the arguments on pp. 256--257 of Heath--Brown \cite{Heath-Brown-1983}.
\end{proof}

From the following estimate
\begin{align*}
            \sum_{d_1\leqslant D}\sum_{d_2\leqslant D}[d_1,d_2]^{\kappa-\ell}
 \ll & \,\, \sum_{d_1\leqslant D}\sum_{d_2\leqslant D}\bigg(\frac{(d_1,d_2)}{d_1d_2}\bigg)^{\ell-\kappa}
              =\sum_{1\leqslant r\leqslant D}\sum_{k_1\leqslant D/r}\sum_{k_2\leqslant D/r}
              \frac{1}{r^{\ell-\kappa}k_1^{\ell-\kappa}k_2^{\ell-\kappa}}
                 \nonumber \\
 \ll & \,\, \sum_{1\leqslant r\leqslant D}r^{\kappa-\ell}\Bigg(\sum_{k\leqslant D/r}k^{\kappa-\ell}\Bigg)^2
              \ll\sum_{1\leqslant r\leqslant D}r^{\kappa-\ell}\big(Dr^{-1}\big)^{2(\kappa-\ell+1)}\ll D^{\kappa-\ell+1},
\end{align*}
we know that the total contribution of the term  $|\lambda|^{\kappa}[d_1,d_2]^{\kappa-\ell}M^{\kappa\gamma-\kappa+\ell}$ to $|\Sigma_{II}|^2$ is
\begin{align}\label{s-upp-1}
 \ll & \,\, X^\eta TM\Bigg(\sum_{d_1\leqslant D}\sum_{d_2\leqslant D}[d_1,d_2]^{\kappa-\ell}\Bigg)
            |\lambda|^\kappa M^{\kappa\gamma-\kappa+\ell}\times
\mathop{\sum_{n_1\sim N}\sum_{n_2\sim N}\sum_{h_1\sim J}\sum_{h_2\sim J}}_{|\lambda|\leqslant4JN^\gamma T^{-1}}1
                   \nonumber \\
 \ll & \,\, X^\eta TMD^{\kappa-\ell+1}(JN^\gamma T^{-1})^\kappa
            M^{\kappa\gamma-\kappa+\ell}\cdot\mathscr{N}(4JN^\gamma T^{-1})
                   \nonumber \\
\ll & \,\,  X^\eta TM^{-\kappa+\ell+1}D^{\kappa-\ell+1}(JM^\gamma N^\gamma T^{-1})^\kappa
            \cdot\mathscr{N}(4JN^\gamma T^{-1})
                   \nonumber \\
\ll & \,\,  X^{\kappa\gamma+\eta}T^{1-\kappa}M^{-\kappa+\ell+1}D^{\kappa-\ell+1}J^\kappa
            \cdot\mathscr{N}(4JN^\gamma T^{-1}).
\end{align}
If $|\lambda|\leqslant M^{-\gamma}$, then $M[d_1,d_2]^{-1}\leqslant M^{1-\gamma}|\lambda|^{-1}[d_1,d_2]^{-1}$, and thus the total contribution of the term $M[d_1,d_2]^{-1}$ to $|\Sigma_{II}|^2$ is
\begin{equation}\label{S-B-es}
\ll X^\eta TM\cdot ML^3\times
\mathop{\sum_{n_1\sim N}\sum_{n_2\sim N}\sum_{h_1\sim J}\sum_{h_2\sim J}}_{|\lambda|\leqslant M^{-\gamma}}1
\ll X^\eta TM^2\cdot\mathscr{N}(M^{-\gamma}),
\end{equation}
where we use the elementary estimate
\begin{equation*}
\sum_{d_1\leqslant D}\sum_{d_2\leqslant D}[d_1,d_2]^{-1}\ll (\log D)^3.
\end{equation*}
If $|\lambda|> M^{-\gamma}$, then $M[d_1,d_2]^{-1}> M^{1-\gamma}|\lambda|^{-1}[d_1,d_2]^{-1}$. It follows from
the splitting argument that the  total contribution of the term $M^{1-\gamma}|\lambda|^{-1}[d_1,d_2]^{-1}$ to $|\Sigma_{II}|^2$ is
\begin{align}\label{L-B-es}
\ll & \,\, X^\eta TM\cdot M^{1-\gamma}L^3\times\max_{M^{-\gamma}\leqslant\Delta\leqslant4JN^\gamma T^{-1}}
           \mathop{\sum_{n_1\sim N}\sum_{n_2\sim N}\sum_{h_1\sim J}
           \sum_{h_2\sim J}}_{\Delta<|\lambda|\leqslant2\Delta}\frac{1}{|\lambda|}
                 \nonumber \\
\ll & \,\, X^\eta TM^{2-\gamma}\times\max_{M^{-\gamma}\leqslant\Delta\leqslant4JN^\gamma T^{-1}}
           \Delta^{-1}\cdot\mathscr{N}(2\Delta),
\end{align}
which covers the upper bound estimate (\ref{S-B-es}). From Lemma \ref{fenduan-es}, we know that
\begin{equation*}
\mathscr{N}(\Delta)\ll \Delta JN^{2-\gamma}+JNL,
\end{equation*}
which combined with (\ref{s-upp-1}) and (\ref{L-B-es}) yields
\begin{align}\label{Sigma-2-fi-1}
           \big|\Sigma_{II}\big|^2
\ll & \,\, X^{\kappa\gamma+\eta}T^{1-\kappa}M^{-\kappa+\ell+1}D^{\kappa-\ell+1}J^\kappa
           \cdot\mathscr{N}(4JN^\gamma T^{-1})
              \nonumber \\
& \,\, \qquad +X^\eta TM^{2-\gamma}\times\max_{M^{-\gamma}\leqslant\Delta\leqslant4JN^\gamma T^{-1}}
       (JN^{2-\gamma}+JN\Delta^{-1})
              \nonumber \\
\ll & \,\, X^\eta\big(X^{\kappa\gamma+2}T^{-\kappa}M^{-\kappa+\ell-1}D^{\kappa-\ell+1}J^{\kappa+2}
           +X^{\kappa\gamma+1}T^{1-\kappa}M^{-\kappa+\ell}D^{\kappa-\ell+1}J^{\kappa+1}
              \nonumber \\
    & \,\, \qquad +TJX^{2-\gamma}+TMXJ\big).
\end{align}
We choose $T$ such that the first term and the fourth term in the above estimate are equal. Accordingly, we take
\begin{equation}\label{T-chosen}
T=\big[X^{\frac{\kappa\gamma+1}{\kappa+1}}M^{\frac{-\kappa+\ell-2}{\kappa+1}}
D^{\frac{\kappa-\ell+1}{\kappa+1}}J\big]+1.
\end{equation}
Putting (\ref{T-chosen}) into (\ref{Sigma-2-fi-1}), we obtain
\begin{align*}
           \big|\Sigma_{II}\big|^2
\ll & \,\, X^\eta\Big(X^{\frac{\kappa(\gamma+1)+2}{\kappa+1}}M^{\frac{\ell-1}{\kappa+1}}
           D^{\frac{\kappa-\ell+1}{\kappa+1}}J^2
           +X^{\frac{2(\kappa\gamma+1)}{\kappa+1}}M^{\frac{2(\ell-1)}{\kappa+1}}
           D^{\frac{2(\kappa-\ell+1)}{\kappa+1}}J^2
                          \nonumber \\
    & \,\, \qquad+X^{\frac{2\kappa+3-\gamma}{\kappa+1}}M^{\frac{-\kappa+\ell-2}{\kappa+1}}
           D^{\frac{\kappa-\ell+1}{\kappa+1}}J^2
           +X^{\kappa\gamma+1}M^{-\kappa+\ell}D^{\kappa-\ell+1}J^{\kappa+1}
           +JX^{2-\gamma}+MXJ\Big),
\end{align*}
which combined with $J\ll H=X^{1-\gamma+\eta}$ and $D\leqslant X^{\xi+\eta/2}$ for $\xi\leqslant(1-\eta)/2$ yields
\begin{align*}
           \big|\Sigma_{II}\big|^2
\ll & \,\, X^\eta\Big(X^{\frac{3\kappa+4-(\kappa+2)\gamma}{\kappa+1}+\frac{\kappa-\ell+1}{\kappa+1}\xi}
           M^{\frac{\ell-1}{\kappa+1}}
           +X^{\frac{2(\kappa+2-\gamma)}{\kappa+1}+\frac{2(\kappa-\ell+1)}{\kappa+1}\xi}
           M^{\frac{2(\ell-1)}{\kappa+1}}
              \nonumber \\
    & \,\, \qquad+X^{\frac{4\kappa+5-(2\kappa+3)\gamma}{\kappa+1}+\frac{\kappa-\ell+1}{\kappa+1}\xi}
           M^{\frac{-\kappa+\ell-2}{\kappa+1}}
           +X^{\kappa+2-\gamma+(\kappa-\ell+1)\xi}M^{-\kappa+\ell}
           +X^{3-2\gamma}+MX^{2-\gamma}\Big).
\end{align*}
According to above arguments, we deduce the following lemma.
\begin{lemma}\label{Type-II-es}
Assume that $(\kappa,\ell)$ is an exponent pair. Suppose that $\frac{1}{2}<\gamma<1$ and $0<\xi\leqslant(1-\eta)/2$ subject to the conditions
\begin{equation}\label{Type-II-condi-1}
 \gamma>\frac{\kappa}{\kappa-\ell+1}+\xi+\eta
\end{equation}
and
\begin{equation}\label{Type-II-condi-2}
 \gamma>\frac{\kappa+2}{\kappa-\ell+3}+\frac{\kappa-\ell+1}{\kappa-\ell+3}\xi+\eta.
\end{equation}
If there holds
\begin{equation*}
 X^{\frac{(\kappa+2)(1-\gamma)+(\kappa-\ell+1)\xi}{1-\ell}+\eta}\ll M\ll X^{\gamma-\eta},
\end{equation*}
then we have
\begin{equation*}
 \Sigma_{II}\ll X^{1-\eta}.
\end{equation*}
\end{lemma}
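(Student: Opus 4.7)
The plan is to take the inequality for $|\Sigma_{II}|^2$ that appears in the display immediately preceding the lemma -- a sum of six explicit terms -- and to show that, under the hypotheses, each of those six summands is $\ll X^{2-2\eta}$. Taking square roots then gives $\Sigma_{II}\ll X^{1-\eta}$. No new analytic input is required; the proof is essentially bookkeeping that confirms the hypotheses are precisely what is needed to beat each of the six contributions.

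The two elementary summands are dispatched first. The contribution $X^{3-2\gamma}$ is $\ll X^{2-2\eta}$ because $\gamma>1/2$ (together with $\eta$ small), which is part of the lemma's standing assumption and is also implied by (\ref{Type-II-condi-1}). The contribution $MX^{2-\gamma}$ is $\ll X^{2-\eta}$ directly from the upper bound $M\ll X^{\gamma-\eta}$ in the hypothesis.

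The three summands carrying $M^{(\ell-1)/(\kappa+1)}$, $M^{2(\ell-1)/(\kappa+1)}$, and $M^{(-\kappa+\ell-2)/(\kappa+1)}$ all have a negative exponent of $M$, so each is maximized at the smallest admissible $M$. Writing $L := \frac{(\kappa+2)(1-\gamma)+(\kappa-\ell+1)\xi}{1-\ell}$ and substituting $M\asymp X^{L+\eta}$ into the first of these three terms reduces the exponent of $X$ to $2-\eta'$ by direct computation; this is precisely how the lower bound on $M$ in the hypothesis was engineered. A parallel calculation -- slightly shorter because the exponents are smaller -- shows that the second and third summands individually require a weaker lower bound on $M$, and both of those weaker thresholds are absorbed by $L$, so the common hypothesis $M\gg X^{L+\eta}$ handles all three at once.

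The fourth summand $X^{\kappa+2-\gamma+(\kappa-\ell+1)\xi}M^{\ell-\kappa}$ is best treated from the opposite side: since $\ell-\kappa\leq 0$ for standard exponent pairs, we apply the upper bound $M\ll X^{\gamma-\eta}$ and find the exponent of $X$ becomes $\kappa+2-\gamma+(\kappa-\ell+1)\xi+(\gamma-\eta)(\ell-\kappa)$. Requiring this to be $\leq 2-2\eta$ simplifies (after rearrangement) to $(\kappa-\ell+1)\gamma > \kappa + (\kappa-\ell+1)\xi$ up to an $\eta$-perturbation, which is exactly condition (\ref{Type-II-condi-1}). Finally, for the $M$-range in the hypothesis to be non-empty at all one needs $L+\eta < \gamma-\eta$, and expanding this inequality produces $\gamma > \frac{\kappa+2}{\kappa-\ell+3}+\frac{\kappa-\ell+1}{\kappa-\ell+3}\xi+O(\eta)$, which is condition (\ref{Type-II-condi-2}). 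The main obstacle, such as it is, is merely the careful verification that the lower bound $L$ tailored to the first summand also suffices for the second and third, and the clean management of the $\eta$-perturbations throughout.
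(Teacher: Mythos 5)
Your proposal takes exactly the route the paper leaves implicit — the paper displays the six-term bound for $|\Sigma_{II}|^2$ and then states the lemma with ``According to above arguments, we deduce the following lemma,'' so the verification really is the bookkeeping you carry out. Your allocation of the two boundary values of $M$ to the summands is right, your identification of the lower bound $L=\frac{(\kappa+2)(1-\gamma)+(\kappa-\ell+1)\xi}{1-\ell}$ as the balancer of the first summand is right (one checks that substituting $M=X^{L}$ there gives exponent exactly $2$), your check that the second and third summands are automatically subcritical once $M\gg X^{L+\eta}$ is right, and the derivation of (\ref{Type-II-condi-1}) from the fourth summand and of (\ref{Type-II-condi-2}) from the nonemptiness of the $M$-range both expand correctly.

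One slip worth fixing: in the treatment of the fourth summand $X^{\kappa+2-\gamma+(\kappa-\ell+1)\xi}M^{\ell-\kappa}$ you assert that $\ell-\kappa\leqslant 0$ for standard exponent pairs and then apply the \emph{upper} bound $M\ll X^{\gamma-\eta}$. Those two statements are in tension: if the exponent of $M$ were nonpositive the term would be maximized at \emph{small} $M$. In fact the inequality points the other way: every exponent pair in the Phillips hierarchy satisfies $0\leqslant\kappa\leqslant\tfrac12\leqslant\ell\leqslant1$, so $\ell-\kappa\geqslant 0$; in particular the paper's choice $(\kappa,\ell)=A^{3}(\tfrac12,\tfrac12)=(\tfrac1{30},\tfrac{13}{15})$ has $\ell-\kappa=\tfrac56>0$. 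Thus the fourth summand is increasing in $M$ and the upper bound $M\ll X^{\gamma-\eta}$ is indeed the right one to insert; your subsequent computation is correct, only the sign of $\ell-\kappa$ is stated backwards. One further minor point: you cite (\ref{Type-II-condi-1}) as implying $\gamma>\tfrac12$; this is not automatic for all exponent pairs (for small $\kappa$ and small $\xi$ the threshold in (\ref{Type-II-condi-1}) can be well below $\tfrac12$), but it is harmless since $\tfrac12<\gamma$ is a standing hypothesis of the lemma, as you also note.
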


\section{Estimate of Type I Sums}\label{se-type-I}
As in section \ref{se-type-II}, we also put $m=ak$ and breaking up the range for $m$ into intervals $(M,2M]$ such that $MN\asymp X$. According to the definition of the quantity $\Xi_h(\cdot)$, one derives that
\begin{equation*}
\Sigma_I=\mathop{\sum_{m\sim M}\sum_{n\sim N}}_{MN\sim X}\varpi(m)\beta(n)\sum_{0<|h|\leqslant H}e(-h(mn)^\gamma)
\sum_{\substack{d\leqslant D\\ (d,l)=1\\ d|mn-l}}c''(d,h),
\end{equation*}
where
\begin{equation*}
\varpi(m)=\sum_{\substack{m=ak\\ A_1(x)\leqslant a<A_2(x)\\ k\sim K}}\tau_r^s(a)\alpha(k)\ll X^\eta.
\end{equation*}
By changing the order of summation we obtain
\begin{equation}\label{Type-1-fenjie}
 \Sigma_I\ll L\sum_{0<h\leqslant H}\mathcal{H}_h,
\end{equation}
where
\begin{equation*}
\mathcal{H}_h=\sum_{\substack{d\leqslant D\\ (d,l)=1}}c''(d,h)\sum_{m\sim M}\varpi(m)
\sum_{\substack{n\sim N\\ mn\sim X\\ mn\equiv l \!\!\!\!\!\pmod d}}e(h(mn)^\gamma).
\end{equation*}
By Lemma \ref{ex-arith-pair}, we deduce that for any exponent pair $(\kappa,\ell)$ there holds
\begin{align}\label{H_h-sin}
           \mathcal{H}_h
\ll & \,\, X^\eta\sum_{\substack{d\leqslant D\\ (d,l)=1}}\sum_{m\sim M}
           \Bigg|\sum_{\substack{n\sim N\\ mn\sim X\\ mn\equiv l\!\!\!\!\!\pmod d}}e(h(mn)^\gamma)\Bigg|
                   \nonumber \\
\ll & \,\, X^\eta\sum_{\substack{d\leqslant D\\ (d,l)=1}}\sum_{m\sim M}\Big(h^{-1}X^{1-\gamma}M^{-1}d^{-1}+
           h^{\kappa}M^{\kappa-\ell}d^{\kappa-\ell}X^{\kappa\gamma-\kappa+\ell}\Big)
                    \nonumber \\
\ll & \,\, h^{-1}X^{1-\gamma+\eta}+h^{\kappa}X^{\kappa\gamma-\kappa+\ell+\eta}M^{\kappa-\ell+1}D^{\kappa-\ell+1}.
\end{align}
From (\ref{Type-1-fenjie}) and (\ref{H_h-sin}), one has
\begin{align*}
           \Sigma_I
\ll & \,\, X^{1-\gamma+\eta}+H^{\kappa+1}X^{\kappa\gamma-\kappa+\ell}M^{\kappa-\ell+1}D^{\kappa-\ell+1}
                \nonumber \\
\ll & \,\, X^{1-\gamma+\eta}+M^{\kappa-\ell+1}X^{1+\ell-\gamma+(\kappa-\ell+1)\xi+\eta}.
\end{align*}
According to above estimate, we obtain the following lemma.
\begin{lemma}\label{Type-I-es}
Assume that $(\kappa,\ell)$ is an exponent pair. Suppose that $M$ satisfies the condition
\begin{equation*}
 M\ll X^{\frac{\gamma-\ell}{\kappa-\ell+1}-\xi-\eta}.
\end{equation*}
Then we have
\begin{equation*}
 \Sigma_{I}\ll X^{1-\eta}.
\end{equation*}
\end{lemma}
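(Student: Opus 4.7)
The plan is to follow the standard template for Type~I estimates in sieve problems: expand the definition of $\Xi_{h,a}(n)$, exchange orders of summation so that the innermost object is a one-dimensional exponential sum over $n$ along an arithmetic progression mod $d$, and then apply the progression version of Heath--Brown's exponent-pair bound (Lemma \ref{ex-arith-pair}). First I would absorb the smooth weight $\varpi(m)$ (which is $\ll X^\eta$) into a power of $X^\eta$, so that $\Sigma_I$ is majorized by $L\sum_{0<h\leqslant H}\mathcal{H}_h$, where $\mathcal{H}_h$ is a triple sum over $d\leqslant D$, $m\sim M$, and the inner exponential sum in $n$ with $mn\equiv l\pmod d$ and $mn\sim X$.

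Next, in the innermost sum I would substitute $y=mn$ (or equivalently put $n\equiv l\bar{m}\pmod d$ and rescale), so that Lemma \ref{ex-arith-pair} applies to the progression $y\equiv l\pmod{d}$ with length $\sim X/m$, giving two contributions: one of order $d^{-1}h^{-1}X^{1-\gamma}m^{-1}$ and one of order $d^{\kappa-\ell}h^\kappa m^{\kappa-\ell}X^{\kappa\gamma-\kappa+\ell}$. Summing the first over $d\leqslant D$ (losing a $\log D$), over $m\sim M$ (which kills the $m^{-1}$), and over $h\leqslant H$ (losing a $\log H$) gives a contribution of $X^{1-\gamma+\eta}$. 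Summing the exponent-pair term uses $\sum_{d\leqslant D}d^{\kappa-\ell}\ll D^{\kappa-\ell+1}$, $\sum_{m\sim M}m^{\kappa-\ell}\ll M^{\kappa-\ell+1}$, and $\sum_{h\leqslant H}h^\kappa\ll H^{\kappa+1}$, producing the bound
\begin{equation*}
\Sigma_I\ll X^{1-\gamma+\eta}+H^{\kappa+1}M^{\kappa-\ell+1}D^{\kappa-\ell+1}X^{\kappa\gamma-\kappa+\ell+\eta}.
\end{equation*}

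Finally, inserting $H=X^{1-\gamma+\eta}$ and $D=X^\xi$ simplifies $H^{\kappa+1}X^{\kappa\gamma-\kappa+\ell}$ to $X^{1+\ell-\gamma}$ (with a negligible loss in the $\eta$-exponent), yielding
\begin{equation*}
\Sigma_I\ll X^{1-\gamma+\eta}+M^{\kappa-\ell+1}X^{1+\ell-\gamma+(\kappa-\ell+1)\xi+\eta}.
\end{equation*}
The first term is trivially $\ll X^{1-\eta}$ since $\gamma$ is close to $1$. For the second term, requiring $\ll X^{1-\eta}$ and solving for $M$ gives exactly the hypothesized threshold $M\ll X^{(\gamma-\ell)/(\kappa-\ell+1)-\xi-\eta}$ (after a harmless relabelling of $\eta$), and the lemma follows. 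The only mildly delicate point is the change of variable $y=mn$ in the exponential sum so that Lemma \ref{ex-arith-pair} applies cleanly to the progression $y\equiv l\pmod d$; everything else is bookkeeping.
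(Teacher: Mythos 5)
Your proposal follows the paper's proof essentially verbatim: absorb $\varpi(m)\ll X^\eta$, reduce to $L\sum_{0<h\leqslant H}\mathcal{H}_h$, apply Lemma \ref{ex-arith-pair} to the inner sum over $n$ (via $n\equiv l\bar m\ (\mathrm{mod}\ d)$ with effective frequency $hm^\gamma$), sum over $d$, $m$, $h$, and substitute $H=X^{1-\gamma+\eta}$, $D=X^\xi$ to arrive at the identical threshold for $M$. The only minor wrinkle is the first description "substitute $y=mn$" (which would yield a bivariate congruence modulo $[m,d]$ of length $\sim X$, not $\sim X/m$); your parenthetical alternative of working with $n\equiv l\bar m\ (\mathrm{mod}\ d)$ directly is the correct reading and is what produces the displayed bounds, so this is a slip in wording rather than in substance.
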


\section{Proof of Theorem \ref{Theorem-1}}
As the illustration in section \ref{se-type-II} and section \ref{se-type-I}, during the process of the treatment of exponential sum estimate, we regard $m=ak$ as one variable and $n$ as another. Hence one can deduce from (\ref{expo-fenjie}) that
\begin{equation}\label{expo-decom}
 \Bigg|\sum_{A_1(x)\leqslant a<A_2(x)}\!\!\tau_r^s(a)\sum_{n\sim X/a}\Lambda(n)G_a(n)\Bigg|\ll_\eta X^\eta\max\Bigg|\mathop{\sum_{m\sim M}\sum_{n\sim N}}_{MN\sim X}\varpi(m)\beta(n)\mathcal{F}(mn)\Bigg|,
\end{equation}
where the maximum is taken over all bilinear forms with coefficients satisfying one of
\begin{equation}\label{type-II-coeff-N}
   |\varpi(m)|\leqslant 1,\qquad \qquad |\beta(n)|\leqslant1,
\end{equation}
or
\begin{equation*}
   |\varpi(m)|\leqslant 1,\qquad \qquad \beta(n)=1,
\end{equation*}
or
\begin{equation*}
   |\varpi(m)|\leqslant 1,\qquad \quad \beta(n)=\log n,
\end{equation*}
and also satisfying in all cases
\begin{equation}\label{gene-coeff-condi-N}
  M\leqslant X.
\end{equation}
In the argument between (\ref{suffi-genera})--(\ref{type-II-coeff}), by dividing the $K_j$ into two groups in a judicious fashion, with making a new variable $m$ by bonding $a$ and corresponding $k_j$ in one group, we are able to reduce the range of $M$ from (\ref{gene-coeff-condi-N}).
\begin{lemma}\label{exponen-fenjie}
If we have real numbers $0<\mathfrak{a}<1,\,0<\mathfrak{b}<\mathfrak{c}<1$  satisfying
\begin{equation*}
\mathfrak{b}<\frac{2}{3},\qquad 1-\mathfrak{c}<\mathfrak{c}-\mathfrak{b}, \qquad 1-\mathfrak{a}<\frac{\mathfrak{c}}{2},
\end{equation*}
then (\ref{expo-decom}) still holds when (\ref{gene-coeff-condi-N}) is replaced by the conditions
\begin{equation*}
M\leqslant X^\mathfrak{a} \qquad \textrm{for Type I sums},
\end{equation*}
and
\begin{equation*}
X^\mathfrak{b}\leqslant M\leqslant X^\mathfrak{c} \qquad \textrm{for Type II sums}.
\end{equation*}
\end{lemma}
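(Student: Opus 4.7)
The plan is to unpack the Heath--Brown decomposition that already produced (\ref{expo-decom}) and, rather than use the crude bundling $M \leq X$, to perform a finer grouping of its six inner variables together with the outer variable $a$. Concretely, (\ref{expo-decom}) came from substituting the identity
\[
\Lambda(n) \;=\; \sum_{j=1}^{3} \binom{3}{j}(-1)^{j-1} \sum_{k_1 \cdots k_{2j} = n} \mu(k_1)\cdots\mu(k_j)\log k_{2j}
\]
for $n \sim X/a$, subject to the Heath--Brown constraint $k_i \leq (2X/a)^{1/3}$ for $i \leq j$, and then dyadically decomposing $a \sim A$, $k_i \sim K_i$ with $A\, K_1 \cdots K_{2j} \asymp X$. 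For each dyadic configuration the task is to choose a subset $S \subseteq \{1,\ldots,2j\}$ and set $m := a\prod_{i \in S} k_i$ and $n := \prod_{i \notin S} k_i$, so as to place the resulting bilinear form either in the Type I bin ($n$ a single variable, $M \leq X^{\mathfrak{a}}$) or in the Type II bin ($M \in [X^{\mathfrak{b}}, X^{\mathfrak{c}}]$).

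Writing normalized exponents $\tilde\alpha = \log A/\log X$ and $\alpha_i = \log K_i/\log X$ (so $\tilde\alpha + \sum \alpha_i = 1$ and $\alpha_i \leq (1-\tilde\alpha)/3$ for $i \leq j$), I split into two cases. In \emph{Case A}, some $\alpha_{i_0}$ with $i_0 > j$ satisfies $\alpha_{i_0} \geq 1 - \mathfrak{a}$; taking $S^{\mathrm{c}} = \{i_0\}$ yields $n = k_{i_0}$ (carrying the smooth weight $1$ or $\log n$) and $M = X/K_{i_0} \leq X^{\mathfrak{a}}$, realizing Type I. In the complementary \emph{Case B}, every $\alpha_i$ with $i > j$ is smaller than $1 - \mathfrak{a}$, which by hypothesis (iii) is $< \mathfrak{c}/2$, so together with the Heath--Brown bound $\alpha_i \leq 1/3$ for $i \leq j$, each individual $\alpha_i$ is at most $\max(1/3, \mathfrak{c}/2)$. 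A greedy procedure starting at $\sigma = \tilde\alpha$ and appending $\alpha_i$'s in decreasing size so long as $\sigma \leq \mathfrak{c}$ must terminate (since $\tilde\alpha + \sum \alpha_i = 1 > \mathfrak{c}$) at some $\sigma \leq \mathfrak{c}$ with a blocked $\alpha_{i^*}$ satisfying $\sigma + \alpha_{i^*} > \mathfrak{c}$; using (i) and (ii) to show $\max(1/3, \mathfrak{c}/2) \leq \mathfrak{c} - \mathfrak{b}$, one deduces $\sigma \geq \mathfrak{c} - \alpha_{i^*} \geq \mathfrak{b}$, so $\sigma \in [\mathfrak{b}, \mathfrak{c}]$, realizing Type II.

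The main obstacle is the tight numerical interplay among (i), (ii), and (iii): verifying $\max(1/3, \mathfrak{c}/2) \leq \mathfrak{c} - \mathfrak{b}$ requires a sub-case split on whether $\mathfrak{c} \leq 2/3$ (where (ii) yields $\mathfrak{c} - \mathfrak{b} > 1 - \mathfrak{c} \geq 1/3 \geq \mathfrak{c}/2$) or $\mathfrak{c} > 2/3$ (where (i) must be invoked alongside (ii) to control the $\mathfrak{c}/2$ branch). A parallel degenerate regime arises when $\tilde\alpha > \mathfrak{c}$: there $a$ alone overshoots the Type II window, but the Heath--Brown constraint tightens to $\alpha_i \leq (1-\mathfrak{c})/3 < \mathfrak{c} - \mathfrak{b}$ by (ii), so one restarts the greedy by first peeling small $k_i$'s from $m$ until $M$ descends through $[X^{\mathfrak{b}}, X^{\mathfrak{c}}]$. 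Confirming that no sliver of parameter space escapes both bins, and that the bundled coefficient $\varpi(m)$ remains divisor-bounded as demanded by (\ref{type-II-coeff-N}), is the delicate heart of the argument.
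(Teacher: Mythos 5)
The paper does not give its own proof of Lemma~\ref{exponen-fenjie}; it simply cites Proposition~1 of Balog and Friedlander. Your proposal is a blind reconstruction of that proposition via the Heath--Brown decomposition plus a greedy subset-sum argument, which is the right general strategy, but the key numerical step of your Case~B is false, and this is a genuine gap rather than a detail to be filled in later.

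Specifically, your Case~B argument hinges on the inequality $\max(1/3,\,\mathfrak{c}/2)\leqslant\mathfrak{c}-\mathfrak{b}$, so that the greedy procedure cannot leap over the window $[\mathfrak{b},\mathfrak{c}]$. You assert this follows from hypotheses (i) and (ii), after a sub-case split on whether $\mathfrak{c}\leqslant 2/3$. The sub-case $\mathfrak{c}\leqslant 2/3$ is fine, but in the sub-case $\mathfrak{c}>2/3$ the required inequality $\mathfrak{c}/2\leqslant\mathfrak{c}-\mathfrak{b}$ reduces to $\mathfrak{b}\leqslant\mathfrak{c}/2$, and neither (i) $\mathfrak{b}<2/3$ nor (ii) $\mathfrak{b}<2\mathfrak{c}-1$ implies this: for $\mathfrak{c}>2/3$ one always has $2\mathfrak{c}-1>\mathfrak{c}/2$, so (ii) is strictly weaker than the inequality you need. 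A concrete witness is $\mathfrak{b}=0.38,\ \mathfrak{c}=0.7,\ \mathfrak{a}=0.7$, where (i), (ii), (iii) all hold but $\mathfrak{c}/2=0.35>0.32=\mathfrak{c}-\mathfrak{b}$. Worse, the inequality fails precisely for the parameter choice the paper actually uses: as $\gamma\to 1$ one has $\mathfrak{c}\to 1$, $\mathfrak{b}\to 45/76$, $\mathfrak{c}-\mathfrak{b}\to 31/76\approx 0.408$, while $\mathfrak{c}/2\to 1/2$. So the sufficient condition on which your greedy step rests does not hold in the very application the lemma is meant to serve. The argument as written therefore does not establish that the greedy terminates inside $[\mathfrak{b},\mathfrak{c}]$; one needs a finer analysis (for instance using the sharper Heath--Brown constraint $\alpha_i\leqslant(1-\tilde\alpha)/3$ for the constrained variables, the fact that there are at most three of them, and the fact that a constrained variable that is close to $1/3$ forces $\tilde\alpha$ to be small), and such an analysis is exactly what you have deferred as ``the delicate heart of the argument.'' Deferring it is not an option: without it, Case~B is open.

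A second, smaller concern is the regime $\tilde\alpha>\mathfrak{c}$. In the paper's setup $a$ is always bonded into $m$, so $M\geqslant A$; if $A>X^{\mathfrak{c}}$ then Type~II is impossible outright, and Type~I requires a single smooth factor $k_{i_0}$ with $\alpha_{i_0}\geqslant 1-\mathfrak{a}$, which is also not automatic when $X/a$ is small. Your description --- ``restart the greedy by first peeling small $k_i$'s from $m$'' --- cannot lower $M$ below $A$ and so does not touch this case. Either one must argue that this range of $a$ is negligible (or excluded by an implicit constraint), or one must handle it explicitly; the proposal does neither. In short, the architecture (dyadic Heath--Brown decomposition, choose a subset exponent inside the window, Type~I when a smooth variable is long) is the standard route, but the two concrete verifications that make it go through are exactly the ones missing.
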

\begin{proof}
 See Proposition 1 of Balog and Friedlander \cite{Balog-Friedlander-1992}.
\end{proof}
In order to prove (\ref{suffi-2}), we need to take appropriate exponent pair $(\kappa,\ell)$ in Lemma \ref{Type-II-es} and Lemma \ref{Type-I-es}, respectively. It is easy to see that, for $n\geqslant1$, there holds
\begin{equation*}
A^n\bigg(\frac{1}{2},\frac{1}{2}\bigg)=\bigg(\frac{1}{2^{n+2}-2},1-\frac{n+1}{2^{n+2}-2}\bigg).
\end{equation*}
In Lemma \ref{Type-II-es}, we take $(\kappa,\ell)=A^3(\frac{1}{2},\frac{1}{2})=(\frac{1}{30},\frac{13}{15})$.
Then the conditions (\ref{Type-II-condi-1}) and (\ref{Type-II-condi-2}) make
\begin{equation}\label{spe-Type-II-condi-ex-1}
\gamma>\frac{1}{5}+\xi+\eta
\end{equation}
and
\begin{equation}\label{spe-Type-II-condi-ex-2}
 \gamma>\frac{61}{65}+\frac{1}{13}\xi+\eta.
\end{equation}
Hence $\Sigma_{II}\ll X^{1-\eta}$ provided that
\begin{equation*}
X^{\frac{61(1-\gamma)+5\xi}{4}+\eta}\ll M\ll X^{\gamma-\eta}.
\end{equation*}
In Lemma \ref{Type-I-es}, we take
\begin{equation*}
(\kappa,\ell)=A^{36}\bigg(\frac{1}{2},\frac{1}{2}\bigg)=\bigg(\frac{1}{2^{38}-2},1-\frac{37}{2^{38}-2}\bigg).
\end{equation*}
Then $\Sigma_{I}\ll X^{1-\eta}$ provided that
\begin{equation*}
M\ll X^{\frac{2^{37}-1}{19}\gamma-\frac{2^{38}-39}{38}-\xi-\eta}.
\end{equation*}
From Lemma \ref{exponen-fenjie}, we take
\begin{align*}
\mathfrak{a} = & \,\, \frac{2^{37}-1}{19}\gamma-\frac{2^{38}-39}{38}-\xi-\eta, \\
\mathfrak{b} = & \,\, \frac{61(1-\gamma)+5\xi}{4}+\eta,  \\
\mathfrak{c} = & \,\, \gamma-\eta.
\end{align*}
Then one can easily check that the conditions (\ref{suffi-condi-1}), (\ref{spe-Type-II-condi-ex-1}) and
(\ref{spe-Type-II-condi-ex-2}), as well as the inequalities in Lemma \ref{exponen-fenjie}, hold. Therefore, we
show that (\ref{suffi-genera}) holds, which is sufficient to complete the proof of (\ref{suffi-2}).

For (\ref{suffi-3}), we can follow the process from (\ref{suffi-4}) to (\ref{suffi-condi-1}). Then it suffices to
show that
\begin{align}\label{suffi-S-3-condi}
 S^*:= & \,\, \sum_{\substack{d\leqslant D\\ (d,l)=1}}\frac{1}{\varphi(d)}\sum_{0<h\leqslant H}\frac{1}{h}
            \sum_{\substack{A_1(x)\leqslant a<A_2(x)\\ (a,d)=1}}\tau_r^s(a)
                  \nonumber \\
 & \,\, \qquad \times\Bigg|\sum_{an\sim X}
        \Lambda(n)\Big(e\big(-h(an)^\gamma\big)-e\big(-h(an+1)^\gamma\big)\Big)\Bigg|\ll x^\gamma L^{-A}.
\end{align}
Afterwards, for the innermost sum on the right--hand side of (\ref{suffi-S-3-condi}), one can follow the routine
process as is shown in (\ref{inner-sum-upper}) to see that
\begin{align*}
S^* \ll & \,\, X^{\gamma-1}\times\sum_{\substack{d\leqslant D\\ (d,l)=1}}\frac{1}{\varphi(d)}
               \sum_{0<h\leqslant H}
               \sum_{\substack{A_1(x)\leqslant a<A_2(x)\\ (a,d)=1}}\tau_r^s(a)
               \Bigg|\sum_{an\sim X}\Lambda(n)e(-h(an)^\gamma)\Bigg|
                    \nonumber \\
      = & \,\, X^{\gamma-1}\times\sum_{\substack{d\leqslant D\\ (d,l)=1}}\frac{1}{\varphi(d)}\sum_{0<h\leqslant H}
               \sum_{\substack{A_1(x)\leqslant a<A_2(x)\\ (a,d)=1}}\tau_r^s(a)\cdot c^*(d,h,a)
               \sum_{an\sim X}\Lambda(n)e(-h(an)^\gamma)
                    \nonumber \\
    \ll & \,\, X^{\gamma-1}\times\sum_{A_1(x)\leqslant a<A_2(x)}\tau_r^s(a)
               \sum_{an\sim X}\Lambda(n)\sum_{0<h\leqslant H}e(-h(an)^\gamma)
               \sum_{\substack{d\leqslant D\\ (d,l)=1}}\frac{c^*(d,h,a)}{\varphi(d)}
                    \nonumber \\
      = & \,\, X^{\gamma-1}\times\sum_{A_1(x)\leqslant a<A_2(x)}\tau_r^s(a)\sum_{an\sim X}\Lambda(n)G_a^*(n),
\end{align*}
where
\begin{equation*}
  G_a^*(n)=\sum_{0<h\leqslant H}\Xi_{h,a}^*(n)e(-h(an)^\gamma)
\end{equation*}
and
\begin{equation*}
  \Xi_{h,a}^*(n)=\sum_{\substack{d\leqslant D\\ (d,l)=1}}\frac{c^*(d,h,a)}{\varphi(d)}, \qquad |c^*(d,h,a)|=1.
\end{equation*}
Hence, in order to show (\ref{suffi-3}), we only need to prove
\begin{equation*}
  \Bigg|\sum_{A_1(x)\leqslant a<A_2(x)}\tau_r^s(a)\sum_{an\sim X}\Lambda(n)G_a^*(n)\Bigg|\ll XL^{-A},
\end{equation*}
which can be treated almost exactly the same as the arguments in dealing with (\ref{suffi-2}). Consequently, we omit the details herein. This completes the proof of Theorem \ref{Theorem-1}.

\section{Proof of Theorem \ref{Theorem-2}}

In this section, we shall prove Theorem \ref{Theorem-2} according to the result of Theorem \ref{Theorem-1},
Corollary \ref{Coro-1}, (\ref{Thm-1-eq-shifted}), the `weighted sieve' and the `switching principle' constructed by Chen \cite{Chen-1973}. First, we list some lemmas which is necessary for proving Theorem \ref{Theorem-2}.

\subsection{Some Preliminary Lemmas}

Let $\mathscr{A}$ denote a finite set of integers, $\mathscr{P}$ denote an infinite set of primes,
$\overline{\mathscr{P}}$ denote the set of primes which do not belong to $\mathscr{P}$. For a given $z\geqslant2$, we set
\begin{equation*}
P(z)=\prod_{\substack{p<z\\ p\in\mathscr{P}}}p.
\end{equation*}
Define the sifting function as
\begin{equation*}
S(\mathscr{A},\mathscr{P},z)=\big|\big\{a\in\mathscr{A}:(a,P(z))=1\big\}\big|.
\end{equation*}
For $d|P(z)$, define $\mathscr{A}_d=\{a\in\mathscr{A}:a\equiv0\!\pmod d\}$.
Moreover, we assume that $|\mathscr{A}_d|$ may be written in the form
\begin{equation}\label{sieve-condi-1}
|\mathscr{A}_d|=\frac{\omega(d)}{d}X+r_d,\qquad \mu(d)\not=0,\qquad (d,\overline{\mathscr{P}})=1,
\end{equation}
where $\omega(d)$ is a multiplicative function such that $0\leqslant\omega(p)<p$, $X$ is a positive number
independent of $d$, and $r_d$ is an error term which is to be small on average so that $X$ approximates
to the cardinality of $\mathscr{A}$. Also, we assume that the function $\omega(p)$ is constant on average
over $p$ in $\mathscr{P}$, which means that
\begin{equation}\label{sieve-condi-2}
\sum_{\substack{z_1\leqslant p<z_2\\ p\in\mathscr{P}}}\bigg(1-\frac{\omega(p)}{p}\bigg)^{-1}
\leqslant\frac{\log z_2}{\log z_1}\bigg(1+\frac{\mathcal{K}}{\log z_1}\bigg)
\end{equation}
holds for all $z_2>z_1\geqslant2$, where $\mathcal{K}$ is a constant satisfying $\mathcal{K}\geqslant1$. For details of (\ref{sieve-condi-1}) and (\ref{sieve-condi-2}), one can see the arguments (4.12)--(4.15) on page 28 of Halberstam and Richert \cite{Halberstam-Richert-book}, and the arguments on page 205 of Iwaniec \cite{Iwaniec-1981}.
\begin{lemma}
Suppose that the conditions (\ref{sieve-condi-1}) and (\ref{sieve-condi-2}) hold. Then we have
\begin{align}
 S(\mathscr{A},\mathscr{P},z) \geqslant & \,\, XV(z)\big(f(s)+O\big(\log^{-1/3}D\big)\big)-R_D, \label{lower-sieve}
               \\
 S(\mathscr{A},\mathscr{P},z) \leqslant & \,\, XV(z)\big(F(s)+O\big(\log^{-1/3}D\big)\big)+R_D, \label{upper-sieve}
\end{align}
where
\begin{equation*}
R_D=\sum_{\substack{d<D\\ d|P(z)}}|r_d|,\qquad s=\frac{\log D}{\log z},
\end{equation*}
\begin{equation}\label{V(z)-def}
V(z)=\mathcal{C}(\omega)\frac{e^{-C_0}}{\log z}\bigg(1+O\bigg(\frac{1}{\log z}\bigg)\bigg),
\end{equation}
\begin{equation}\label{C(omega)-def}
\mathcal{C}(\omega)=\prod_p\bigg(1-\frac{\omega(p)}{p}\bigg)\bigg(1-\frac{1}{p}\bigg)^{-1},
\end{equation}
where $C_0$ denotes the Euler's constant, $f(s)$ and $F(s)$ denote the classical functions in the linear sieve
theory, which are determined by the following differential--difference equation
\begin{equation}
\begin{cases}\label{diff-eq}
F(s)=\displaystyle\frac{2e^{C_0}}{s},\quad f(s)=0, \quad 0<s\leqslant2,\\
\displaystyle\frac{\mathrm{d}}{\mathrm{d}s}(sF(s))=f(s-1),\quad \displaystyle\frac{\mathrm{d}}{\mathrm{d}s}(sf(s))=F(s-1),\quad s\geqslant2.
\end{cases}
\end{equation}
\end{lemma}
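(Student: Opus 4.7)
The plan is to invoke the Rosser--Iwaniec linear sieve machinery, adapted to the hypotheses (\ref{sieve-condi-1}) and (\ref{sieve-condi-2}). I would open with the Legendre identity
$S(\mathscr{A},\mathscr{P},z)=\sum_{d\mid P(z)}\mu(d)|\mathscr{A}_d|$,
and construct upper and lower bound sifting coefficients $\lambda^\pm_d$ (the Rosser sequences) supported on $d<D$ with $|\lambda^\pm_d|\leqslant 1$, which dominate the M\"{o}bius function from above and below at the level of divisor sums over $n\mid P(z)$. Substituting (\ref{sieve-condi-1}) produces
$\sum_{d\mid P(z)}\lambda^\pm_d|\mathscr{A}_d|=X\sum_{d\mid P(z)}\lambda^\pm_d\frac{\omega(d)}{d}+O(R_D)$,
so the problem reduces to an asymptotic evaluation of the main-term sum together with a trivial bound on the remainder $R_D$.

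Using the definition (\ref{V(z)-def}) of $V(z)$, the Mertens--Wirsing type hypothesis (\ref{sieve-condi-2}) yields the prime product evaluation with the constant $\mathcal{C}(\omega)$ appearing in (\ref{C(omega)-def}). An iterated application of Buchstab's identity
$S(\mathscr{A},\mathscr{P},z)=S(\mathscr{A},\mathscr{P},w)-\sum_{\substack{w\leqslant p<z\\ p\in\mathscr{P}}}S(\mathscr{A}_p,\mathscr{P},p)$,
combined with the combinatorial structure of the Rosser truncation, re-expresses $\sum_{d\mid P(z)}\lambda^\pm_d\omega(d)/d$ as a nested sum over strictly decreasing chains of primes. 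Its asymptotic evaluation, in the regime $D,z\to\infty$ with $s=\log D/\log z$ fixed, produces precisely the two sieve functions $F(s)$ and $f(s)$ characterised by the differential-difference equation (\ref{diff-eq}).

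The central technical task, and the main obstacle, is to quantify the rate of convergence of these iterated chain sums to $F(s)V(z)$ and $f(s)V(z)$ with the sharp error term $O(\log^{-1/3}D)$. One must exploit cancellation in the Rosser truncation, use (\ref{sieve-condi-2}) with $\mathcal{K}\geqslant 1$ to dominate the tails of the chain sums by convergent geometric-type series, and finally invoke a contour or saddle-point argument applied to the Dirichlet generating series associated with $F$ and $f$. The specific exponent $1/3$ arises from optimising the contour shift against the analytic behaviour of the auxiliary function that controls the mean value of $F(s)-f(s)$ along the critical chains; this is the delicate piece of the Iwaniec proof.

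Finally, packaging the upper and lower Rosser weights with the main-term asymptotics and bounding the remainder trivially by $|R_D|\leqslant\sum_{d<D,\,d\mid P(z)}|r_d|$ yields (\ref{lower-sieve}) and (\ref{upper-sieve}). The two subtle points that the plan must verify are (i) the sign combinatorics of the Rosser construction, ensuring that the weights $\lambda^\pm_d$ really do bracket $\sum_{d\mid n}\mu(d)$ from above and below for every $n\mid P(z)$, and (ii) the quantitative transfer of the differential-difference recursion into the error $O(\log^{-1/3}D)$; both are executed in full detail in Halberstam and Richert \cite{Halberstam-Richert-book} and Iwaniec \cite{Iwaniec-1981}, and my proof would follow their template verbatim.
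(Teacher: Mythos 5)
Your proposal correctly identifies that this is the standard Rosser--Iwaniec linear sieve statement and defers to Halberstam--Richert and Iwaniec for the full execution, which is precisely what the paper does: its entire proof consists of citing (6)--(9) on page~209 of Iwaniec~\cite{Iwaniec-1981} and (2.4), (2.5) of Chapter~5 of Halberstam--Richert~\cite{Halberstam-Richert-book}. Your outline of the Rosser weight construction, the Buchstab iteration, and the origin of $F,f$ from the differential--difference equation is an accurate sketch of the machinery behind those citations, so the two approaches coincide.
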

\begin{proof}
For (\ref{lower-sieve}) and (\ref{upper-sieve}), one can refer to (6), (7), (8) on page 209 of Iwaniec \cite{Iwaniec-1981}, while (\ref{diff-eq}) can be referred to as a special case with $\varkappa=1,\beta=2$ in
(9) of Iwaniec \cite{Iwaniec-1981}. Moreover, for (\ref{V(z)-def}) and (\ref{C(omega)-def}) one can see (2.4)
and (2.5) of Chapter $5$ in Halberstam and Richert \cite{Halberstam-Richert-book}.
\end{proof}

\begin{lemma}
Let $F(s)$ and $f(s)$ be defined as in (\ref{diff-eq}). Then we have
\begin{align*}
F(s) = & \,\, \frac{2e^{C_0}}{s},\qquad 0<s\leqslant3;
                \nonumber \\
F(s) = & \,\, \frac{2e^{C_0}}{s}\bigg(1+\int_{2}^{s-1}\frac{\log(t-1)}{t}\mathrm{d}t\bigg), \qquad
              3\leqslant s\leqslant5;
                \nonumber \\
f(s) = & \,\, \frac{2e^{C_0}\log(s-1)}{s},\qquad 2\leqslant s\leqslant4;
                \nonumber \\
f(s) = & \,\, \frac{2e^{C_0}}{s}\bigg(\log(s-1)+\int_3^{s-1}\frac{\mathrm{d}t_1}{t_1}\int_{2}^{t_1-1}
              \frac{\log(t_2-1)}{t_2}\mathrm{d}t_2\bigg),
              \qquad 4\leqslant s\leqslant6;
\end{align*}
where $C_0$ denotes Euler's constant.
\end{lemma}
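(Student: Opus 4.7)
The plan is to integrate the differential-difference system (\ref{diff-eq}) step by step, alternating between $F$ and $f$ on successive intervals of length one. Since the initial data $F(s)=2e^{C_0}/s$ and $f(s)=0$ on $(0,2]$ are explicit, and the recursions $(sF(s))'=f(s-1)$ and $(sf(s))'=F(s-1)$ each express the next piece in closed form once the previous piece is known, a short induction on $\lfloor s\rfloor$ will produce the four formulas. The integration constants are pinned down by requiring continuity of $sF(s)$ and $sf(s)$ at every integer breakpoint; these continuity conditions follow from the definitions since the antiderivatives involved are continuous.

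First I would extend $F$ from $(0,2]$ to $[2,3]$. For $s\in[2,3]$ one has $s-1\in[1,2]$, so $f(s-1)=0$ and therefore $(sF(s))'=0$. Hence $sF(s)$ is constant on $[2,3]$, and matching at $s=2$ via the initial data gives $sF(s)=2e^{C_0}$. This yields the first claim on all of $(0,3]$. Next, for $f$ on $[2,4]$, the recursion reads $(sf(s))'=F(s-1)=2e^{C_0}/(s-1)$; integrating from $2$ to $s$ with $f(2)=0$ gives $sf(s)=2e^{C_0}\log(s-1)$, which is the third claim.

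For $F$ on $[3,5]$, one has $s-1\in[2,4]$, so the previous step supplies $f(s-1)=2e^{C_0}\log(s-2)/(s-1)$; integrating $(sF(s))'$ from $3$ to $s$ with the boundary value $3F(3)=2e^{C_0}$ and substituting $t=u-1$ gives the claimed formula. For $f$ on $[4,6]$ one inserts the just-derived expression $F(s-1)=(2e^{C_0}/(s-1))\bigl(1+\int_2^{s-2}\log(t-1)/t\,\mathrm{d}t\bigr)$ (valid since $s-1\in[3,5]$) into $(sf(s))'=F(s-1)$, integrates from $4$ to $s$ with $4f(4)=2e^{C_0}\log 3$, and again substitutes $t_1=u-1$. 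The identity $\int_3^{s-1}\mathrm{d}t_1/t_1=\log(s-1)-\log 3$ absorbs the boundary contribution $\log 3$ exactly, leaving the iterated integral that appears in the stated formula.

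The argument is essentially a sequence of elementary integrations, so the only real obstacle is bookkeeping: one must carefully track the ranges of validity ($s-1$ must lie in the interval where the previously established formula applies), correctly translate the integration limits under the substitutions $t=u-1$ and $t_1=u-1$, and verify that the boundary constants $2F(2)$, $3F(3)$, $2f(2)$, $4f(4)$ produced by the preceding cases combine cleanly with the indefinite integrals to give the compact forms stated in the lemma. Once this is laid out interval by interval, the four identities follow without any additional input beyond the differential-difference equation itself.
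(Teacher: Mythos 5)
Your derivation is correct and is the standard step-by-step integration of the differential--difference system, which is exactly what the cited reference (Pan and Pan, Chapter VII, (7.51)--(7.54)) carries out; the paper itself gives no independent argument, only that citation. Your bookkeeping of the intervals, the boundary constants $3F(3)=2e^{C_0}$ and $4f(4)=2e^{C_0}\log 3$, the substitution $t_1=u-1$, and the cancellation of $\log 3$ against $\int_3^{s-1}\mathrm{d}t_1/t_1$ all check out.
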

\begin{proof}
See (7.51)--(7.54) on page 127 of Chapter VII in Pan and Pan \cite{Pan-Pan-book}.
\end{proof}

\subsection{Weighted Sieve Method}
Let $x$ be a sufficiently large number and set
\begin{equation*}
\mathscr{A}=\big\{a:a\leqslant x,\,a=p+2,\, a=[k^{1/\gamma}]\big\},\qquad \mathscr{P}=\big\{p:p>2\big\}.
\end{equation*}
Denote by $\mathds{1}_{\mathcal{A}}(n)$ the characteristic function of set $\mathcal{A}$, i.e.,
\begin{equation*}
\mathds{1}_{\mathcal{A}}(n)=
\begin{cases}
1, & \textrm{if}\,\, n\in\mathcal{A},  \\
0, & \textrm{if}\,\, n\not\in\mathcal{A}.
\end{cases}
\end{equation*}
Then one has
\begin{equation*}
\big|\big\{a:a\in\mathscr{A},\,\,a=\mathcal{P}_2\big\}\big|
\geqslant\sum_{\substack{a\in\mathscr{A}\\ (a,P(x^{1/10.92}))=1}}\mathds{1}_{\mathcal{P}_2}(a).
\end{equation*}
Trivially, we have
\begin{equation*}
\sum_{\substack{a\in\mathscr{A}\\ (a,P(x^{1/10.92}))=1\\ \mu(a)=0}}\mathds{1}_{\mathcal{P}_2}(a)
\ll \sum_{x^{1/10.92}\leqslant p\leqslant x^{1/2}}\frac{x}{p^2}\ll x^{1-\frac{1}{10.92}}.
\end{equation*}
Hence one gets
\begin{equation}\label{lower-tran-1}
\big|\big\{a:a\in\mathscr{A},\,\,a=\mathcal{P}_2\big\}\big|\geqslant
\sum_{\substack{a\in\mathscr{A}\\ (a,P(x^{1/10.92}))=1\\ (a,2)=1}}\mu^2(a)\mathds{1}_{\mathcal{P}_2}(a)
+O(x^{1-\frac{1}{10.92}}).
\end{equation}
Now, we claim that, for given integer $a$ with $a\leqslant x$ which subjects to $(a,P(x^{1/10.92}))=1,(a,2)=1$ and $\mu(a)\not=0$, there holds
\begin{equation}\label{weight-ineq}
\mathds{1}_{\mathcal{P}_2}(a)\geqslant1-\frac{1}{2}\varrho_1(a)-\frac{1}{2}\varrho_2(a)-\varrho_3(a),
\end{equation}
where
\begin{align*}
         \varrho_1(a)
= & \,\, \sum_{\substack{x^{1/10.92}\leqslant p<x^{1/3.29}\\ p|a}}1;
               \nonumber \\
         \varrho_2(a)
= & \,\, \begin{cases}
1, & \textrm{if}\,\, a=p_1p_2p_3,\,\,x^{1/10.92}\leqslant p_1<x^{1/3.29}\leqslant p_2<p_3,\,\,(a,2)=1, \\
0, & \textrm{otherwise};
\end{cases}
               \nonumber \\
         \varrho_3(a)
= & \,\, \begin{cases}
1, & \textrm{if}\,\, a=p_1p_2p_3,\,\,x^{1/3.29}\leqslant p_1<p_2<p_3,\,\,(a,2)=1, \\
0, & \textrm{otherwise}.
\end{cases}
\end{align*}
Actually, by noting the fact that $\varrho_1(a)\geqslant0,\varrho_2(a)\geqslant0$ and $\varrho_3(a)\geqslant0$, if
$\Omega(a)\leqslant2$, then obviously we have $\mathds{1}_{\mathcal{P}_2}(a)=1\geqslant1-\frac{1}{2}\varrho_1(a)-\frac{1}{2}\varrho_2(a)-\varrho_3(a)$. Now,
we always assume that $\Omega(a)\geqslant3$, then $\mathds{1}_{\mathcal{P}_2}(a)=0$. If $\varrho_1(a)\geqslant2$, then $\mathds{1}_{\mathcal{P}_2}(a)=0\geqslant1-\frac{1}{2}\varrho_1(a)\geqslant1-\frac{1}{2}\varrho_1(a)
-\frac{1}{2}\varrho_2(a)-\varrho_3(a)$. If $\varrho_1(a)=1$, then $a$ must not be represented as the form $a=p_1p_2p_3p_4$ with $x^{1/10.92}\leqslant p_1<x^{1/3.29}\leqslant p_2<p_3<p_4$. Otherwise, one deduces that
$a=p_1p_2p_3p_4\geqslant x^{1/10.92+3/3.29}>x$, which is a contradiction. Hence $a$ must be written as
$a=p_1p_2p_3$ with $x^{1/10.92}\leqslant p_1<x^{1/3.29}\leqslant p_2<p_3$ so that $\varrho_2(a)=1$ and $\varrho_3(a)=0$. In this case, there holds $\mathds{1}_{\mathcal{P}_2}(a)=0=1-\frac{1}{2}\varrho_1(a)-\frac{1}{2}\varrho_2(a)-\varrho_3(a)$.
If $\varrho_1(a)=0$, then $\varrho_2(a)=0$, and thus every prime factor of $a$ is not less than $x^{1/3.29}$ which
combined $\Omega(a)\geqslant3$ makes $a=p_1p_2p_3$ with $x^{1/3.29}\leqslant p_1<p_2<p_3$, i.e., $\varrho_3(a)=1$.
At this time, we also have $\mathds{1}_{\mathcal{P}_2}(a)=0=1-\frac{1}{2}\varrho_1(a)-\frac{1}{2}\varrho_2(a)-\varrho_3(a)$.
Above all, (\ref{weight-ineq}) holds under the conditions given as above.

On the other hand, one has
\begin{equation*}
\sum_{\substack{a\in\mathscr{A}\\ (a,P(x^{1/10.92}))=1\\ \mu(a)=0\\ (a,2)=1}}
\bigg(1-\frac{1}{2}\varrho_1(a)-\frac{1}{2}\varrho_2(a)-\varrho_3(a)\bigg)
\ll \sum_{x^{1/10.92}\leqslant p\leqslant x^{1/2}}\frac{x}{p^2}\ll x^{1-\frac{1}{10.92}},
\end{equation*}
which combined (\ref{lower-tran-1}) and (\ref{weight-ineq}) yields
\begin{align}\label{lower-tran-2}
                 \big|\big\{a:a\in\mathscr{A},\,\,a=\mathcal{P}_2\big\}\big|
\geqslant & \,\, \sum_{\substack{a\in\mathscr{A}\\ (a,P(x^{1/10.92}))=1\\ (a,2)=1}}
                 \bigg(1-\frac{1}{2}\varrho_1(a)-\frac{1}{2}\varrho_2(a)-\varrho_3(a)\bigg)
                 +O\big(x^{\frac{9.92}{10.92}}\big)
                    \nonumber \\
      = & \,\, S-\frac{1}{2}S_1-\frac{1}{2}S_2-S_3+O\big(x^{\frac{9.92}{10.92}}\big),
\end{align}
where
\begin{equation*}
S=\sum_{\substack{a\in\mathscr{A}\\ (a,P(x^{1/10.92}))=1\\ (a,2)=1}}1;\qquad\qquad
S_i=\sum_{\substack{a\in\mathscr{A}\\ (a,P(x^{1/10.92}))=1\\ (a,2)=1}}\varrho_i(a),\qquad i=1,2,3.
\end{equation*}
Now, we shall give the lower bound estimate of $S$ and the upper bound estimates of $S_i\,(i=1,2,3)$.

\subsubsection{Lower Bound Estimate for $S$}

We use (\ref{lower-sieve}) to  give the lower bound of $S$. Hence we take
\begin{equation*}
X=\pi_\gamma(x),\qquad D=x^\xi,\qquad
\omega(d)=
 \begin{cases}
   \displaystyle\frac{d}{\varphi(d)}, & \textrm{if $(d,2)=1$ and $\mu(d)\not=0$},\\
   \,\,\,\,\,0, & \textrm{otherwise}.
 \end{cases}
\end{equation*}
Then we have
\begin{equation*}
 \mathcal{C}(\omega)=\prod_p\bigg(1-\frac{\omega(p)}{p}\bigg)\bigg(1-\frac{1}{p}\bigg)^{-1}
 =2\prod_{p>2}\bigg(1-\frac{1}{(p-1)^2}\bigg)=:2\mathfrak{S},
\end{equation*}
say. It follows from (\ref{Thm-1-eq-shifted}) with (\ref{g-spe}) that
\begin{align*}
R_D = & \,\, \sum_{\substack{d<x^\xi\\ d|P(x^{1/10.92})}}
             \Bigg|\sum_{\substack{p\leqslant x-2\\ p\equiv-2\!\!\!\!\!\pmod d\\ p+2=[k^{1/\gamma}]}}1
             -\frac{1}{\varphi(d)}\sum_{\substack{p\leqslant x-2\\ p+2=[k^{1/\gamma}]}}1\Bigg|
                   \nonumber \\
\leqslant & \,\, \sum_{\substack{d\leqslant x^\xi\\ (d,2)=1}}
             \Bigg|\sum_{\substack{p\leqslant x\\ p\equiv-2\!\!\!\!\!\pmod d\\ p+2=[k^{1/\gamma}]}}1
             -\frac{1}{\varphi(d)}\sum_{\substack{p\leqslant x\\ p+2=[k^{1/\gamma}]}}1\Bigg|
             \ll\frac{x^\gamma}{(\log x)^A}.
\end{align*}
Then (\ref{lower-sieve}) gives
\begin{align}\label{S-lower-final}
S \geqslant & \,\,\pi_\gamma(x)V\big(x^{1/10.92}\big)\big(f(10.92\xi)+O\big(\log^{-1/3}x\big)\big)-R_D
                      \nonumber \\
\geqslant & \frac{4\mathfrak{S}}{\xi}\bigg(\log(10.92\xi-1)
            +\int_3^{10.92\xi-1}\frac{\mathrm{d}t_1}{t_1}\int_2^{t_1-1}\frac{\log(t_2-1)}{t_2}
            \mathrm{d}t_2\bigg)\frac{x^\gamma}{\log^2x}(1+o(1)).
\end{align}

\subsubsection{Upper Bound Estimate for $S_1$}

By the definition of $S_1$, we have
\begin{align}\label{S_1-upper-1}
S_1 = & \,\, \sum_{\substack{a\in\mathscr{A}\\ (a,P(x^{1/10.92}))=1\\ (a,2)=1}}
             \sum_{\substack{x^{1/10.92}\leqslant p<x^{1/3.29}\\ p|a}}1
             =\sum_{x^{1/10.92}\leqslant p<x^{1/3.29}}\sum_{\substack{a\in\mathscr{A}\\ (a,P(x^{1/10.92}))=1\\
             p|a,\,\, (a,2)=1}}1
                   \nonumber \\
\leqslant & \,\, \sum_{x^{1/10.92}\leqslant p<x^{1/3.29}}
             \sum_{\substack{a\in\mathscr{A}\\ (a,P(x^{1/10.92}))=1\\ p|a}}1
             =:\sum_{x^{1/10.92}\leqslant p<x^{1/3.29}}S(\mathscr{A}_p,\mathscr{P},x^{1/10.92}),
\end{align}
say. First we use (\ref{upper-sieve}) to  give the upper bound of $S(\mathscr{A}_p,\mathscr{P},x^{1/10.92})$. Hence for $x^{1/10.92}\leqslant p<x^{1/3.29}$, one has
\begin{align}\label{S_1-inner-1}
         & \,\,        S(\mathscr{A}_p,\mathscr{P},x^{1/10.92})
\leqslant \frac{1}{\varphi(p)}\pi_\gamma(x)V\big(x^{1/10.92}\big)
                 \Bigg(F\bigg(\frac{\log(x^\xi/p)}{\log x^{1/10.92}}\bigg)+O\big(\log^{-1/3}x\big)\Bigg)+R_D(p)
                 \nonumber \\
\leqslant & \,\, 21.84\mathfrak{S}e^{-C_0}\cdot\frac{1}{p-1}\cdot F\bigg(10.92\xi-\frac{10.92\log p}{\log x}\bigg)
                 \frac{x^\gamma}{\log^2x}(1+o(1))+R_D(p),
\end{align}
where $C_0$ is Euler's constant and
\begin{equation*}
R_D(p)=\sum_{\substack{d<x^\xi/p\\ d|P(x^{1/10.92})}}\big|r_{dp}\big|.
\end{equation*}
It follows from (\ref{Thm-1-eq-shifted}) with (\ref{g-spe}) that
\begin{align}\label{S_1-error-total}
          \sum_{x^{1/10.92}\leqslant p<x^{1/3.29}}R_D(p)
 = & \,\, \sum_{x^{1/10.92}\leqslant p<x^{1/3.29}}\sum_{\substack{d<x^\xi/p\\ d|P(x^{1/10.92})}}
 \Bigg|\sum_{\substack{q\leqslant x-2\\ q\equiv-2\!\!\!\!\!\pmod{dp}\\ q+2=[k^{1/\gamma}]}}1
  -\frac{1}{\varphi(dp)}\sum_{\substack{q\leqslant x-2\\ q+2=[k^{1/\gamma}]}}1\Bigg|
                \nonumber \\
 \leqslant & \,\, \sum_{\substack{d'\leqslant x^\xi\\ (d',2)=1}}
             \Bigg|\sum_{\substack{q\leqslant x-2\\ q\equiv-2\!\!\!\!\!\pmod {d'}\\ q+2=[k^{1/\gamma}]}}1
             -\frac{1}{\varphi(d')}\sum_{\substack{q\leqslant x-2\\ q+2=[k^{1/\gamma}]}}1\Bigg|
             \ll\frac{x^\gamma}{(\log x)^A}.
\end{align}
It follows from (\ref{S_1-upper-1})--(\ref{S_1-error-total}), prime number theorem, and partial summation that
\begin{align}\label{S_1-upper-final}
S_1\leqslant & \,\, 21.84\mathfrak{S}e^{-C_0}\frac{x^\gamma}{\log^2 x}(1+o(1))\times
                    \sum_{x^{1/10.92}\leqslant p<x^{1/3.29}}\frac{1}{p}\cdot
                    F\bigg(10.92\xi-\frac{10.92\log p}{\log x}\bigg)
                    \nonumber \\
 = & \,\, 4\mathfrak{S}(1+o(1))\frac{x^\gamma}{\log^2x}\times\Bigg(\int_{\xi-\frac{3}{10.92}}^{\frac{1}{3.29}}
          \frac{\mathrm{d}\alpha}{\alpha(\xi-\alpha)}
                    \nonumber \\
  & \,\,\qquad +\int_{\frac{1}{10.92}}^{\xi-\frac{3}{10.92}}
          \frac{1}{\alpha(\xi-\alpha)}\bigg(1+\int_{2}^{10.92(\xi-\alpha)-1}\frac{\log(\beta-1)}{\beta}
          \mathrm{d}\beta\bigg)\mathrm{d}\alpha\Bigg).
\end{align}

\subsubsection{Upper Bound Estimate for $S_2$}

By the definition of $\varrho_2(a)$, we have
\begin{align*}
          S_2
 = & \,\, \sum_{\substack{a\in\mathscr{A}\\ (a,P(x^{1/10.92}))=1\\ (a,2)=1}}
          \sum_{\substack{a=p_1p_2p_3\\ x^{1/10.92}\leqslant p_1<x^{1/3.29}\leqslant p_2<p_3}}1
                \nonumber \\
 = & \,\, \sum_{x^{1/10.92}\leqslant p_1<x^{1/3.29}}\sum_{x^{1/3.29}\leqslant p_2<(x/p_1)^{1/2}}
          \sum_{\substack{p=p_1p_2p_3-2\\ p_2<p_3\leqslant x/(p_1p_2)\\ p_1p_2p_3=[k^{1/\gamma}]}}1.
\end{align*}
Define
\begin{align*}
         \mathscr{E}
 = & \,\,\Big\{e:\,e=p_1p_2,\,x^{1/10.92}\leqslant p_1<x^{1/3.29}\leqslant p_2<(x/p_1)^{1/2}\Big\},
                        \nonumber \\
         \mathscr{L}
 = & \,\,\Big\{\ell:\,\ell=ep-2,\,e\in\mathscr{E},\,ep\leqslant x\Big\},
                        \nonumber \\
         \mathscr{B}
 = & \,\,\Big\{\ell:\ell\in\mathscr{L},\, \ell+2=[k^{1/\gamma}]\,\,\textrm{for some}\,\,k\in\mathbb{N}^+\Big\}.
\end{align*}
Obviously, there holds
\begin{equation*}
 \big|\mathscr{E}\big|\leqslant\sum_{x^{1/10.92}\leqslant p_1<x^{1/3.29}}\bigg(\frac{x}{p_1}\bigg)^{1/2}
 \ll\frac{x^{1/2+1/6.58}}{\log x}<x^{0.652}.
\end{equation*}
In addition, for $e\in\mathscr{E}$, one has
\begin{equation*}
\begin{cases}
e=p_1p_2\geqslant x^{1/3.29+1/10.92}\geqslant x^{0.3955}, \\
e=p_1p_2<p_1(x/p_1)^{1/2}=x^{1/2}p_1^{1/2}<x^{1/2+1/6.58}<x^{0.652}.
\end{cases}
\end{equation*}
We claim that the number of the elements in $\mathscr{L}$, which are not exceeding $x^{0.3955}$, is less than
$x^{0.652}$. Virtually, for such $\ell\in\mathscr{L}$ satisfying $\ell\leqslant x^{0.3955}$, then there exists
$e\in\mathscr{E}$ and prime $p$ such that
\begin{equation}\label{L-num}
0<ep-2=\ell\leqslant x^{0.3955}.
\end{equation}
If we fix the element $e\in\mathscr{E}$, the prime $p$ which satisfies the inequality (\ref{L-num}) must be unique.
Otherwise, if there exist two primes $p$ and $p'$ with $p\not=p'$ such that
\begin{equation*}
0<ep-2\leqslant x^{0.3955}\qquad \textrm{and}\qquad 0<ep'-2\leqslant x^{0.3955},
\end{equation*}
then we deduce that
\begin{equation*}
x^{0.3955}>\big|(ep-2)-(ep'-2)\big|=e|p-p'|\geqslant2e\geqslant2x^{0.3955},
\end{equation*}
which is a contradiction. Consequently, we obtain
\begin{align}\label{S_2-upper-1}
S_2 \leqslant & \,\, S(\mathscr{B},\mathscr{P},x^{0.3955})+O(x^{0.652})
                       \nonumber \\
\leqslant & \,\, S(\mathscr{B},\mathscr{P},z)+O(x^{0.652})
\end{align}
holds for $z\leqslant x^{0.3955}$.  Let $\mathscr{B}_d=\big\{\ell: \ell\in\mathscr{B}, \ell\equiv0\!\pmod d\big\}$. Then it is easy to see that
\begin{equation*}
   \big|\mathscr{B}_d\big|=\frac{1}{\varphi(d)}\mathcal{X}+\mathscr{R}_d^{(1)}+\mathscr{R}_d^{(2)},
\end{equation*}
where
\begin{equation*}
  \mathcal{X}=\sum_{\ell\in\mathscr{L}}\big((\ell+3)^\gamma-(\ell+2)^\gamma\big),
\end{equation*}
\begin{equation}\label{R_d(1)-def}
  \mathscr{R}_d^{(1)}=\sum_{\substack{\ell\in\mathscr{L}\\ \ell\equiv0\!\!\!\!\!\pmod d}}
  \big((\ell+3)^\gamma-(\ell+2)^\gamma\big)
  -\frac{1}{\varphi(d)}\sum_{\substack{\ell\in\mathscr{L}}}\big((\ell+3)^\gamma-(\ell+2)^\gamma\big),
\end{equation}
\begin{equation}\label{R_d(2)-def}
  \mathscr{R}_d^{(2)}=\sum_{\substack{\ell\in\mathscr{L}\\ \ell\equiv0\!\!\!\!\!\pmod d}}
  \Big(\psi\big(-(\ell+3)^\gamma\big)-\psi\big(-(\ell+2)^\gamma\big)\Big).
\end{equation}
In order to apply (\ref{upper-sieve}) to give upper bound estimate for $S(\mathscr{B},\mathscr{P},z)$, we need to show that
\begin{equation}\label{S_2-err-condi}
  \sum_{\substack{d\leqslant x^\xi\\ (d,2)=1}}\Big|\mathscr{R}_d^{(i)}\Big|\ll \frac{x^\gamma}{(\log x)^{A}},
  \qquad i=1,2.
\end{equation}
We shall prove (\ref{S_2-err-condi}) by two following lemmas. For convenience, we put $D=x^\xi$.
\begin{lemma}\label{R_1-err-mean}
Let $\mathscr{R}_d^{(1)}$ be defined as in (\ref{R_d(1)-def}). Then we have
\begin{equation*}
 \sum_{\substack{d\leqslant D\\ (d,2)=1}}\Big|\mathscr{R}_d^{(1)}\Big|\ll \frac{x^\gamma}{(\log x)^{A}}.
\end{equation*}
\end{lemma}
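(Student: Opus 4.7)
The plan is to deduce Lemma \ref{R_1-err-mean} from the Pan--Ding mean value theorem (\ref{Pan-Ding-mean}) after a single Abel partial summation. First, since $\ell=ep-2$ with $e\in\mathscr{E}$ and $p$ prime, the congruence $d\mid\ell$ becomes $ep\equiv 2\pmod d$; because $(d,2)=1$ the class $2$ is coprime to $d$, so only pairs with $(e,d)=1$ can contribute. Writing $u(t):=(t+1)^\gamma-t^\gamma$, which satisfies $u(t)\ll t^{\gamma-1}$ and $u'(t)\ll t^{\gamma-2}$ uniformly for $t\geqslant 1$, we rewrite
\begin{equation*}
\mathscr{R}_d^{(1)}=\sum_{\substack{e\in\mathscr{E}\\(e,d)=1}}\Bigg(\sum_{\substack{ep\leqslant x\\ ep\equiv 2\!\!\!\!\!\pmod d}}u(ep)-\frac{1}{\varphi(d)}\sum_{ep\leqslant x}u(ep)\Bigg).
\end{equation*}

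Introducing
\begin{equation*}
\pi_{e,d}(y):=\#\{p:\,ep\leqslant y,\,ep\equiv 2\!\!\!\pmod d\},\qquad \pi_{e}(y):=\#\{p:\,ep\leqslant y\},
\end{equation*}
and the discrepancy
\begin{equation*}
\Delta(y;d):=\sum_{\substack{e\in\mathscr{E}\\(e,d)=1}}\Bigg(\pi_{e,d}(y)-\frac{\pi_e(y)}{\varphi(d)}\Bigg),
\end{equation*}
Abel summation in the prime variable yields
\begin{equation*}
\big|\mathscr{R}_d^{(1)}\big|\leqslant u(x)\big|\Delta(x;d)\big|+\int_{2}^{x}|u'(t)|\,\big|\Delta(t;d)\big|\,\mathrm{d}t.
\end{equation*}
It is therefore enough to establish, uniformly for $2\leqslant y\leqslant x$, the estimate
\begin{equation*}
\sum_{\substack{d\leqslant D\\ (d,2)=1}}\big|\Delta(y;d)\big|\ll\frac{y}{(\log y)^{A+2}},
\end{equation*}
after which Lemma \ref{R_1-err-mean} follows because $u(x)\ll x^{\gamma-1}$ and $\int_{2}^{x}t^{\gamma-2}\cdot t(\log t)^{-A-2}\,\mathrm{d}t\ll x^\gamma(\log x)^{-A-2}$.

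This uniform estimate I would obtain by invoking the Pan--Ding theorem (\ref{Pan-Ding-mean}) with $f(a)=g(a)=\mathds{1}_{\mathscr{E}}(a)$, which satisfies the weight hypothesis $g(a)\leqslant\tau_2(a)$ since every $e\in\mathscr{E}$ has exactly two prime factors. The support $\mathscr{E}\subset[x^{0.3955},x^{0.652}]$ lies inside $[1,x^{1-\varepsilon}]$, and our level $D=x^\xi$ with $\xi<9/19$ lies well within Pan--Ding's permitted range $x^{1/2}(\log x)^{-B}$, so the hypotheses of (\ref{Pan-Ding-mean}) are met. The passage between $\Lambda$-weighted and prime-counting sums is standard: set $\theta_{e,d}(y):=\sum_{p:\,ep\leqslant y,\,ep\equiv 2\pmod d}\log p$, note that it differs from $\sum_{en\leqslant y,\,en\equiv 2\pmod d}\Lambda(n)$ by the prime-power contribution (bounded on $d$-average by $O(D\sqrt{y})$, which is admissible), then recover $\pi_{e,d}$ from $\theta_{e,d}$ by one further partial summation in the weight $\log^{-1}$.

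The main obstacle is really only the compatibility check between our specific setup and the hypotheses of (\ref{Pan-Ding-mean}); once that is in place, the proof reduces to the two routine partial-summation steps described above. The fact that $\mathscr{E}$ consists of semiprimes supplies the cheap divisor bound on $g$ for free, and the condition $\xi<1/2$ keeps us comfortably inside Pan--Ding's level of distribution. Neither point is hard, but both are essential for the argument to close, and no additional analytic input beyond (\ref{Pan-Ding-mean}) is required.
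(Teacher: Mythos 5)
Your reduction begins with the claim
\begin{equation*}
\mathscr{R}_d^{(1)}=\sum_{\substack{e\in\mathscr{E}\\(e,d)=1}}\Bigg(\sum_{\substack{ep\leqslant x\\ ep\equiv 2\!\!\!\!\!\pmod d}}u(ep)-\frac{1}{\varphi(d)}\sum_{ep\leqslant x}u(ep)\Bigg),
\end{equation*}
but this equation is false: the coprimality observation $(d,2)=1\Rightarrow(e,d)=1$ applies only to the first (congruence--restricted) sum inside the definition (\ref{R_d(1)-def}), since that is the only place the condition $d\mid\ell$ lives. The second, $\varphi(d)^{-1}$-averaged sum runs over all $\ell\in\mathscr{L}$, i.e.\ over all $e\in\mathscr{E}$ regardless of $(e,d)$. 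What is actually true is
\begin{equation*}
\mathscr{R}_d^{(1)}=\sum_{\substack{e\in\mathscr{E}\\(e,d)=1}}\Bigg(\sum_{\substack{ep\leqslant x\\ ep\equiv 2\!\!\!\!\!\pmod d}}u(ep)-\frac{1}{\varphi(d)}\sum_{ep\leqslant x}u(ep)\Bigg)-\frac{1}{\varphi(d)}\sum_{\substack{e\in\mathscr{E}\\(e,d)>1}}\sum_{ep\leqslant x}u(ep),
\end{equation*}
and you have silently dropped the second block. This is a genuine gap: that block does not cancel and is not a priori negligible. The paper isolates it as a separate quantity $\mathcal{R}_2$ and estimates it by a direct argument that exploits the arithmetic of $\mathscr{E}$: every $e\in\mathscr{E}$ is a product of two primes each at least $x^{1/10.92}$, so $(e,d)>1$ forces $(e,d)\geqslant x^{1/10.92}$; summing over the common divisor $m=(e,d)\geqslant x^{1/10.92}$ then gives $\mathcal{R}_2\ll x^{\gamma-1/10.92}\log^{2}x$, which is admissible. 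Without this piece your argument does not close.

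Apart from that omission, the route you take on the $(e,d)=1$ part --- Abel summation in the prime variable to strip the weight $u(ep)$, reduction to a uniform discrepancy estimate, and appeal to the Pan--Ding theorem (\ref{Pan-Ding-mean}) with the indicator $\mathds{1}_{\mathscr{E}}$ playing the role of $f(a)$ (bounded by $\tau_2$ since elements of $\mathscr{E}$ are semiprimes) --- is essentially the paper's proof of the $\mathcal{R}_1$-contribution, and is fine. Once you reinstate the $(e,d)>1$ term and supply the elementary bound for it, the proposal matches the paper.
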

\begin{proof}
By the definition of the set $\mathscr{L}$, we only need to show that
\begin{equation}\label{S_2-error-eqiv-1}
 \sum_{\substack{d\leqslant D\\ (d,2)=1}}\Bigg|\sum_{e\in\mathscr{E}}\Bigg(\sum_{\substack{p<x/e\\ ep\equiv2\!\!\!\!\!\pmod d}}\big((ep+1)^\gamma-(ep)^\gamma\big)
 -\frac{1}{\varphi(d)}\sum_{p<x/e}\big((ep+1)^\gamma-(ep)^\gamma\big)\Bigg)
 \Bigg|\ll \frac{x^\gamma}{(\log x)^{A}}.
\end{equation}
On the left--hand side of (\ref{S_2-error-eqiv-1}), for the sum over $e\in\mathscr{E}$, we decompose the sum into
two parts according to whether $(e,d)=1$ or $(e,d)>1$. If $(e,d)>1$, by noting the fact that $(d,2)=1$ , we always
have $ep\not\equiv2\!\pmod d$ for all $p<x/e$. Thus the first inner sum in the brackets on the left--hand side of
(\ref{S_2-error-eqiv-1}) vanishes in this case. Hence the left--hand side of (\ref{S_2-error-eqiv-1}) is
$\leqslant \mathcal{R}_1+\mathcal{R}_2$, where
\begin{equation*}
\mathcal{R}_1=\sum_{\substack{d\leqslant D\\ (d,2)=1}}\Bigg|\sum_{\substack{e\in\mathscr{E}\\ (e,d)=1}}
\Bigg(\sum_{\substack{p<x/e\\ ep\equiv2\!\!\!\!\!\pmod d}}\big((ep+1)^\gamma-(ep)^\gamma\big)
 -\frac{1}{\varphi(d)}\sum_{p<x/e}\big((ep+1)^\gamma-(ep)^\gamma\big)\Bigg)\Bigg|,
\end{equation*}
and
\begin{equation*}
\mathcal{R}_2=\sum_{\substack{d\leqslant D\\ (d,2)=1}}\frac{1}{\varphi(d)}
\Bigg|\sum_{\substack{e\in\mathscr{E}\\ (e,d)>1}}\sum_{p<x/e}\big((ep+1)^\gamma-(ep)^\gamma\big)\Bigg|.
\end{equation*}
In view of $x^{0.3955}\leqslant e<x^{0.652}$ for $e\in\mathscr{E}$, by setting $\mathds{1}_{\mathscr{E}}(a)$ the
characteristic function of $\mathscr{E}$, i.e.,
\begin{equation*}
\mathds{1}_{\mathscr{E}}(a)=
\begin{cases}
1, & \textrm{if}\,a\in\mathscr{E}, \\
0, & \textrm{if}\,a\not\in\mathscr{E}.
\end{cases}
\end{equation*}
we can write
\begin{equation*}
\mathcal{R}_1=\sum_{\substack{d\leqslant D\\ (d,2)=1}}
\Bigg|\sum_{\substack{x^{0.3955}\leqslant a<x^{0.652}\\ (a,d)=1}}\!\!\!\mathds{1}_{\mathscr{E}}(a)
\Bigg(\sum_{\substack{p<x/a\\ ap\equiv2\!\!\!\!\!\pmod d}}\!\!\!\!\big((ap+1)^\gamma-(ap)^\gamma\big)
-\frac{1}{\varphi(d)}\sum_{p<x/a}\!\!\big((ap+1)^\gamma-(ap)^\gamma\big)\Bigg)\Bigg|,
\end{equation*}
and
\begin{equation*}
\mathcal{R}_2=\sum_{\substack{d\leqslant D\\ (d,2)=1}}\frac{1}{\varphi(d)}
\sum_{\substack{x^{0.3955}\leqslant a<x^{0.652}\\ (a,d)\geqslant x^{1/10.92}}}\mathds{1}_{\mathscr{E}}(a)
\sum_{p<x/a}\big((ap+1)^\gamma-(ap)^\gamma\big).
\end{equation*}
In order to prove the result of $\mathcal{R}_1\ll x^\gamma(\log x)^{-A}$, by partial summation we can reduce the matters to considering the case where there exists no weight on the prime variable $p$, and only need to show that
\begin{equation*}
\sum_{\substack{d\leqslant D\\ (d,2)=1}}
\Bigg|\sum_{\substack{x^{0.3955}\leqslant a<x^{0.652}\\ (a,d)=1}}\!\!\!\mathds{1}_{\mathscr{E}}(a)
\Bigg(\sum_{\substack{p<x/a\\ ap\equiv2\!\!\!\!\!\pmod d}}1
-\frac{1}{\varphi(d)}\sum_{p<x/a}1\Bigg)\Bigg|\ll\frac{x}{(\log x)^A},
\end{equation*}
By the theorem of Pan and Ding, i.e. (\ref{Pan-Ding-mean}), the above estimate follows immediately. For the upper bound estimate of $\mathcal{R}_2$, one has
\begin{align*}
            \mathcal{R}_2
 \ll & \,\, \sum_{\substack{d\leqslant D\\ (d,2)=1}}\frac{1}{\varphi(d)}
            \sum_{\substack{x^{0.3955}\leqslant a<x^{0.652}\\ (a,d)\geqslant x^{1/10.92}}}
            \sum_{p<x/a}\gamma(ap)^{\gamma-1}
      \ll   \sum_{\substack{d\leqslant D\\ (d,2)=1}}\frac{1}{\varphi(d)}
            \sum_{\substack{x^{0.3955}\leqslant a<x^{0.652}\\ (a,d)\geqslant x^{1/10.92}}}
            a^{\gamma-1}\int_2^{\frac{x}{a}}\frac{\gamma t^{\gamma-1}}{\log t}\mathrm{d}t
                  \nonumber \\
 \ll & \,\, \frac{x^\gamma}{\log x}\sum_{d\leqslant D}\frac{1}{\varphi(d)}
            \sum_{\substack{x^{0.3955}\leqslant a<x^{0.652}\\ (a,d)\geqslant x^{1/10.92}}}\frac{1}{a}
                 \ll \frac{x^\gamma}{\log x}\sum_{d\leqslant D}\frac{1}{\varphi(d)}
            \sum_{\substack{m|d\\ m\geqslant x^{1/10.92}}}\frac{1}{m}\sum_{a_1\leqslant x^{0.652}/m}\frac{1}{a_1}
                  \nonumber \\
 \ll & \,\, \frac{x^\gamma}{\log x}\sum_{x^{1/10.92}\leqslant m\leqslant D}\frac{1}{m\varphi(m)}
            \sum_{d_1\leqslant D/m}\frac{1}{\varphi(d_1)}\sum_{a_1\leqslant x^{0.652}/m}\frac{1}{a_1}
               \ll  x^{\gamma-1/10.92}\log^2x.
\end{align*}
This completes the proof of Lemma \ref{R_1-err-mean}.
\end{proof}

\begin{lemma}\label{R_2-err-mean}
Let $\mathscr{R}_d^{(2)}$ be defined as in (\ref{R_d(2)-def}). Then we have
\begin{equation*}
 \sum_{\substack{d\leqslant D\\ (d,2)=1}}\Big|\mathscr{R}_d^{(2)}\Big|\ll \frac{x^\gamma}{(\log x)^{A}}.
\end{equation*}
\end{lemma}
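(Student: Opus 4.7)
The plan is to mirror Section \ref{Preliminary}'s proof of (\ref{suffi-2}). After substituting $\ell = ep - 2$ (with $\ell$ denoting an element of $\mathscr{L}$, not the exponent-pair component), we have
\[
\mathscr{R}_d^{(2)} = \sum_{e \in \mathscr{E}} \sum_{\substack{p \leqslant x/e \\ ep \equiv 2 \!\!\!\!\pmod d}}\bigl(\psi(-(ep+1)^\gamma) - \psi(-(ep)^\gamma)\bigr).
\]
This is precisely the structure of the inner sum on the left--hand side of (\ref{suffi-4}) with $l = 2$, outer variable $a = e$, and coefficient $g(e) = \mathds{1}_{\mathscr{E}}(e)$. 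Since $\mathds{1}_{\mathscr{E}}(e)$ counts pairs $(p_1, p_2)$ with $e = p_1p_2$ in the ranges specified in $\mathscr{E}$, it satisfies $\mathds{1}_{\mathscr{E}}(e) \leqslant \tau(e) \ll \tau_r^s(e)$, while the range $x^{0.3955} \leqslant e < x^{0.652}$ lies inside $[1, x^{1-\varepsilon}]$. Hence the hypotheses on $A_1, A_2, g$ imposed by Theorem \ref{Theorem-1} are all in force.

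Starting from this identification, I would repeat the machinery of Section \ref{Preliminary} with this particular weight. Specifically: (i) dispose of prime powers $p^k$ with $k \geqslant 2$ by the trivial bound and convert the prime sum into a $\Lambda$-weighted sum via partial summation on $\log p$; (ii) insert the truncated Fourier expansion (\ref{psi-expan}) with $H = X^{1-\gamma+\eta}$ and dispatch the $E(t, H)$ contribution exactly as in (\ref{tri-fenjie})--(\ref{E_1-upper}) by Lemma \ref{ex-arith-pair} with the trivial exponent pair $(\kappa, \ell) = (1/2, 1/2)$, using the condition $\gamma > \tfrac{1}{2} + \xi$ already guaranteed by the hypothesis; (iii) peel off the smooth factor $\phi_{h,a}(n)$ by partial summation as in (\ref{inner-sum-upper}), reducing matters to bounding the analogue of (\ref{suffi-genera}); and (iv) apply the Heath--Brown identity and then the bilinear Type I / Type II estimates of Sections \ref{se-type-II}--\ref{se-type-I}. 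In step (iv), after forming the new variable $m = ak_j$ as in (\ref{expo-decom}), Lemma \ref{Type-II-es} with $(\kappa, \ell) = A^3(\tfrac{1}{2}, \tfrac{1}{2})$, Lemma \ref{Type-I-es} with $(\kappa, \ell) = A^{36}(\tfrac{1}{2}, \tfrac{1}{2})$, and Lemma \ref{exponen-fenjie} for the $M$-range decomposition together yield $x^\gamma / (\log x)^A$ under the hypothesis on $\gamma$ stated in Theorem \ref{Theorem-1}.

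The one point requiring attention---and which I expect to be essentially the only (minor) obstacle---is the reduction from the inner prime sum to its $\Lambda$-weighted form, since (\ref{suffi-2}) is literally written with $\Lambda(n)$. This is pure bookkeeping: the contribution of prime powers $p^k$ with $k \geqslant 2$ is at most $O(x^{\gamma/2 + \varepsilon})$, and partial summation on $\log p$ produces only a logarithmic factor that is absorbed into the $X^\eta$ slack already present in (\ref{expo-fenjie}). Since $g(e) = \mathds{1}_{\mathscr{E}}(e) \ll \tau_r^s(e)$ and the range of $e$ fits inside $[1, x^{1-\varepsilon}]$, no structural change to the argument of Sections \ref{Preliminary}--\ref{se-type-I} is needed, and the desired bound $\sum_{d \leqslant D,\,(d,2) = 1} |\mathscr{R}_d^{(2)}| \ll x^\gamma / (\log x)^A$ follows.
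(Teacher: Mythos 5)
Your proposal is correct in outline, but it takes a genuinely different and more elaborate route than the paper does. You treat $\mathscr{R}_d^{(2)}$ as an instance of the bilinear mean--value machinery behind Theorem \ref{Theorem-1}: view $e\in\mathscr{E}$ as the outer variable $a$ with weight $g(e)=\mathds{1}_{\mathscr{E}}(e)\ll\tau_r^s(e)$, pass from primes to $\Lambda$-weighted sums, insert the $\psi$-expansion, and then run the Heath--Brown identity together with Lemma \ref{exponen-fenjie}, drawing on both the Type I estimate (Lemma \ref{Type-I-es}) and the Type II estimate (Lemma \ref{Type-II-es}). The paper, by contrast, never invokes the Heath--Brown identity and never touches the Type I estimate at this point. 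After the dyadic reduction and $\psi$-expansion, it observes that each $\ell\in\mathscr{L}$ with $\ell\sim X>x^{1-\eta}$ has the \emph{explicit} factorization $\ell+2=p_1p_2p_3$ with $x^{1/10.92}\leqslant p_1<x^{1/3.29}\leqslant p_2$, and proves by a short combinatorial case analysis that some subproduct of $p_1p_2p_3$ always lies in the Type II window $[X^{45/76+\eta},X^{\gamma_0-\eta}]$, where $\gamma_0=1-\frac{18}{2^{38}+17}$. This hands the problem directly to Lemma \ref{Type-II-es} with $(\kappa,\ell)=A^3(\frac12,\frac12)$ alone, so no Vaughan/Heath--Brown decomposition, no Type I sums, and no Balog--Friedlander regrouping lemma are needed. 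Your route buys generality (it is really a re-derivation of (\ref{suffi-2}) with the particular weight $\mathds{1}_{\mathscr{E}}$), while the paper's route buys economy by using the almost--prime structure already present in $\mathscr{L}$.

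One small inaccuracy: your estimate $O(x^{\gamma/2+\varepsilon})$ for the prime-power contribution is too optimistic. A direct count gives roughly
\[
\sum_{e\in\mathscr{E}}\Big(\frac{x}{e}\Big)^{1/2}\ll x^{1/2}\sum_{e\in\mathscr{E}}e^{-1/2}\ll x^{1/2}\cdot x^{0.326}/\log^2 x\ll x^{0.83},
\]
and after absorbing the extra divisor factor from the sum over $d\mid ep^k-2$ one gets $O(x^{0.83+\varepsilon})$, not $O(x^{\gamma/2+\varepsilon})$. Since $\gamma>1-\frac{0.03208}{2^{38}+17}$ this is still harmlessly $o(x^\gamma/\log^A x)$, so the bookkeeping goes through; but the stated exponent should be corrected.
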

\begin{proof}
By the definition of $\mathscr{R}_d^{(2)}$, it suffices to show that, for $X\leqslant x$, there holds
\begin{equation}\label{R_2-suffi-condi}
 \sum_{\substack{d\leqslant D\\ (d,2)=1}}\Bigg|
 \sum_{\substack{\ell\in\mathscr{L}\\ \ell\equiv0\!\!\!\!\!\pmod d\\ \ell\sim X}}
 \Big(\psi\big(-(\ell+3)^\gamma\big)-\psi\big(-(\ell+2)^\gamma\big)\Big)\Bigg|\ll \frac{x^\gamma}{(\log x)^{A}}.
\end{equation}
If $X\leqslant x^{1-\eta}$, then the left--hand side of (\ref{R_2-suffi-condi}) is
\begin{align*}
\ll & \,\, \sum_{\substack{d\leqslant D\\ (d,2)=1}}\Bigg|
           \sum_{\substack{\ell\sim X\\ \ell\equiv0\!\!\!\!\!\pmod d}}\!\!
           \big((\ell+3)^\gamma-(\ell+2)^\gamma\big)\Bigg|+\sum_{\substack{d\leqslant D\\ (d,2)=1}}
           \Bigg|\sum_{\substack{\ell\sim X\\ \ell\equiv0\!\!\!\!\!\pmod d}}\!\!\!
           \big(\big[-(\ell+2)^\gamma\big]-\big[-(\ell+3)^\gamma\big]\big)\Bigg|
                 \nonumber \\
\ll & \,\, \sum_{\ell\sim X}(\ell+3)^{\gamma-1}\tau(\ell)+\sum_{\substack{d\leqslant D\\ (d,2)=1}}
           \sum_{\substack{\ell\sim X\\ \ell\equiv0\!\!\!\!\!\pmod d\\ \ell+2=[k^{1/\gamma}]}}1
           \ll X^{\gamma+\eta/2}\ll x^{\gamma}(\log x)^{-A}.
\end{align*}
Now, we assume that $x^{1-\eta}<X\leqslant x$. By (\ref{psi-expan}), the contribution of the error term in   (\ref{psi-expan}) to the left--hand side of (\ref{R_2-suffi-condi}) is
\begin{equation}
\ll\sum_{\substack{d\leqslant D\\ (d,2)=1}}
\sum_{\substack{\ell\sim X\\ \ell\equiv 0\!\!\!\!\!\pmod d}}
\big(E((\ell+2)^\gamma,H)+E((\ell+3)^\gamma,H)\big)=E_1^*+E_2^*,
\end{equation}
say. The treatment of $E_1^*$ and $E_2^*$ is exactly the same as that of (\ref{E_1-upper}) by using Lemma \ref{ex-arith-pair} with exponent pair $(\frac{1}{2},\frac{1}{2})$, provided that (\ref{suffi-condi-1}) holds.
So we omit the process herein.

The contribution of the main term in (\ref{psi-expan}) to the left--hand side of (\ref{R_2-suffi-condi}) is
\begin{align*}
= & \,\, \sum_{\substack{d\leqslant D\\ (d,2)=1}}\Bigg|
         \sum_{\substack{\ell\in\mathscr{L}\\ \ell\equiv0\!\!\!\!\!\pmod d\\ \ell\sim X}}
         \sum_{0<h\leqslant H}\frac{e(-h(\ell+2)^\gamma)-e(-h(\ell+3)^\gamma)}{2\pi ih}\Bigg|
               \nonumber \\
\ll & \,\, \sum_{\substack{d\leqslant D\\ (d,2)=1}}\sum_{0<h\leqslant H}\frac{1}{h}\Bigg|
           \sum_{\substack{\ell\in\mathscr{L}\\ \ell\equiv0\!\!\!\!\!\pmod d\\ \ell\sim X}}
           \big(e(-h(\ell+2)^\gamma)-e(-h(\ell+3)^\gamma)\big)\Bigg|=:\Upsilon,
\end{align*}
say. Define
\begin{equation*}
\mathfrak{f}_h(\ell)=1-e\big(h((\ell+2)^\gamma-(\ell+3)^\gamma)\big).
\end{equation*}
It follows from partial summation that
\begin{align*}
          \Upsilon
 = & \,\, \sum_{\substack{d\leqslant D\\ (d,2)=1}}\sum_{0<h\leqslant H}\frac{1}{h}\Bigg|
          \sum_{\substack{\ell\in\mathscr{L}\\ \ell\equiv0\!\!\!\!\!\pmod d\\ \ell\sim X}}
          e\big(-h(\ell+2)^\gamma\big)\mathfrak{f}_h(\ell)\Bigg|
                \nonumber \\
 = & \,\, \sum_{\substack{d\leqslant D\\ (d,2)=1}}\sum_{0<h\leqslant H}\frac{1}{h}\Bigg|
          \int_X^{2X}\mathfrak{f}_h(u)\mathrm{d}\Bigg(
          \sum_{\substack{\ell\in\mathscr{L}\\ \ell\equiv0\!\!\!\!\!\pmod d\\ X<\ell\leqslant u}}
          e\big(-h(\ell+2)^\gamma\big)\Bigg)\Bigg|
                \nonumber \\
 \ll & \,\, \sum_{\substack{d\leqslant D\\ (d,2)=1}}\sum_{0<h\leqslant H}\frac{1}{h}
            \Bigg(\Big|\mathfrak{f}_h(2X)\Big|\Bigg|
            \sum_{\substack{\ell\in\mathscr{L}\\ \ell\equiv0\!\!\!\!\!\pmod d\\ \ell\sim X}}
            e\big(-h(\ell+2)^\gamma\big)\Bigg|
                 \nonumber \\
  & \,\, \qquad +\int_X^{2X}\Bigg|
         \sum_{\substack{\ell\in\mathscr{L}\\ \ell\equiv0\!\!\!\!\!\pmod d\\ X<\ell\leqslant u}}
          e\big(-h(\ell+2)^\gamma\big)\Bigg|\bigg|\frac{\partial\mathfrak{f}_h(u)}{\partial u}\bigg|
          \mathrm{d}u\Bigg)
                 \nonumber \\
 \ll & \,\, X^{\gamma-1}\times\sum_{\substack{d\leqslant D\\ (d,2)=1}}\sum_{0<h\leqslant H}
           \max_{X<u\leqslant2X}\Bigg|
           \sum_{\substack{\ell\in\mathscr{L}\\ \ell\equiv0\!\!\!\!\!\pmod d\\ X<\ell\leqslant u}}
           e\big(-h(\ell+2)^\gamma\big)\Bigg|,
\end{align*}
where we use the estimate
\begin{equation*}
\big|\mathfrak{f}_h(u)\big|\ll hu^{\gamma-1}\qquad \textrm{and}\qquad
\bigg|\frac{\partial\mathfrak{f}_h(u)}{\partial u}\bigg|\ll hu^{\gamma-2}.
\end{equation*}
Therefore, we obtain
\begin{align}\label{R_2-divi-condi}
            \sum_{\substack{d\leqslant D\\ (d,2)=1}}\Big|\mathscr{R}_d^{(2)}\Big|
 \ll & \,\, \frac{x^\gamma}{(\log x)^{A}}+\max_{\substack{x^{1-\eta}<X\leqslant x\\ X<u\leqslant2X}}X^{\gamma-1}
            \sum_{\substack{d\leqslant D\\ (d,2)=1}}\sum_{0<h\leqslant H}
            \sum_{\substack{\ell\in\mathscr{L}\\ \ell\equiv0\!\!\!\!\!\pmod d\\ X<\ell\leqslant u}}
            \delta(d,h) e\big(-h(\ell+2)^\gamma\big)
                  \nonumber \\
 \ll & \,\, \frac{x^\gamma}{(\log x)^{A}}
            +\max_{\substack{x^{1-\eta}<X\leqslant x\\ X<u\leqslant2X}}
            X^{\gamma-1}\Bigg|\sum_{\substack{\ell\in\mathscr{L}\\ X<\ell\leqslant u}}
            \sum_{0<h\leqslant H}\Theta_h(\ell)e\big(-h(\ell+2)^\gamma\big)\Bigg|,
\end{align}
where
\begin{equation*}
\Theta_h(\ell)=\sum_{\substack{d\leqslant D\\ \ell\equiv0\!\!\!\!\!\pmod d\\ (d,2)=1 }}\delta(d,h),
\qquad \textrm{and}\qquad \big|\delta(d,h)\big|=1.
\end{equation*}
For $\ell\in\mathscr{L}$ and $\ell\sim X>x^{1-\eta}$, we have $\ell+2=p_1p_2p_3\in[x^{1-\eta},x]$ with $p_1$ and $p_2$ satisfying
\begin{equation*}
 x^{1/10.92}\leqslant p_1<x^{1/3.29}\leqslant p_2<\bigg(\frac{x}{p_1}\bigg)^{1/2},
\end{equation*}
and we claim that there must be a subproduct of $p_1p_2p_3$ which lies in the interval
$[X^{\frac{45}{76}+\eta},X^{\gamma_0-\eta}]$, where
\begin{equation*}
\gamma_0=1-\frac{18}{2^{38}+17}.
\end{equation*}

Virtually, since $p_1\geqslant x^{1/10.92},p_2\geqslant x^{1/3.29}$ and $p_1p_2p_3\in[x^{1-\eta},x]$, we have
$p_i\leqslant X^{\gamma_0-\eta}$. If there exists some $p_i\in[X^{\frac{45}{76}+\eta},X^{\gamma_0-\eta}]$, then the conclusion follows.  If this case does not exist, we consider the product $p_1p_2$. Trivially, there holds
$p_1p_2<p_2^2<x/p_1\leqslant x^{1-\frac{1}{10.92}}\leqslant x^{\gamma_0-\eta}$. If $p_1p_2\in[X^{\frac{45}{76}+\eta},X^{\gamma_0-\eta}]$, then the conclusion follows. Otherwise, if $p_1p_2<X^{\frac{45}{76}+\eta}$, then $p_3=(\ell+2)(p_1p_2)^{-1}>X(X^{\frac{45}{76}+\eta})^{-1}=
X^{\frac{31}{76}-\eta}>x^{\frac{15}{38}}$, and hence $p_2p_3>x^{\frac{1}{3.29}+\frac{15}{38}}>X^{\frac{45}{76}+\eta}$. In addition, $p_2p_3=(\ell+2)p_1^{-1}\leqslant x^{1-\frac{1}{10.92}}\leqslant X^{\gamma_0-\eta}$. Above all cases, there must exist some subproduct of $p_1p_2p_3$ which lies in the interval $[X^{\frac{45}{76}+\eta},X^{\gamma_0-\eta}]$.

For $\gamma_0<\gamma<1$ and the definition of $\xi$, it is easy to see that
\begin{equation*}
  X^{\frac{61(1-\gamma)+5\xi}{4}+\eta}\leqslant X^{\frac{45}{76}+\eta}<X^{\gamma_0-\eta}\leqslant X^{\gamma-\eta},
\end{equation*}
which combined with (\ref{R_2-divi-condi}) and Lemma \ref{Type-II-es} with exponent pair $(\kappa,\ell)=A^3(\frac{1}{2},\frac{1}{2})=(\frac{1}{30},\frac{13}{15})$ yields
\begin{equation*}
\sum_{\substack{d\leqslant D\\ (d,2)=1}}\Big|\mathscr{R}_d^{(2)}\Big|\ll \frac{x^\gamma}{(\log x)^{A}}.
\end{equation*}
This completes the proof of  Lemma \ref{R_2-err-mean}.
\end{proof}

Taking $z=x^{\xi/3}$ in (\ref{S_2-upper-1}), it follows from (\ref{upper-sieve}), Lemma \ref{R_1-err-mean} and
Lemma \ref{R_2-err-mean} that
\begin{equation}\label{S_B-upp}
S(\mathscr{B},\mathscr{P},x^{\xi/3})\leqslant\mathcal{X}V(x^{\xi/3})F(3)(1+o(1)).
\end{equation}
Next, we compute the quantity $\mathcal{X}$ definitely. We write
\begin{equation}\label{S_2-X-rewri}
\mathcal{X}=\sum_{\ell\in\mathscr{L}}\gamma\ell^{\gamma-1}+\sum_{\ell\in\mathscr{L}}
\Big((\ell+3)^\gamma-(\ell+2)^\gamma-\gamma\ell^{\gamma-1}\Big).
\end{equation}
For the second term in (\ref{S_2-X-rewri}), one has
\begin{align}\label{S_2-X-err-upp}
    & \,\, \sum_{\ell\in\mathscr{L}}\Big((\ell+3)^\gamma-(\ell+2)^\gamma-\gamma\ell^{\gamma-1}\Big)
           \ll\sum_{\ell\in\mathscr{L}}\ell^{\gamma-2}
               \nonumber \\
 \ll & \,\, \sum_{\substack{x^{1/10.92}\leqslant p_1<x^{1/3.29}\leqslant p_2<(x/p_1)^{1/2}\\ p_3<x/(p_1p_2)}}
            (p_1p_2p_3-2)^{\gamma-2}
               \nonumber \\
 \ll & \,\, \sum_{\substack{x^{1/10.92}\leqslant p_1<x^{1/3.29}\leqslant p_2<(x/p_1)^{1/2}}}
            (p_1p_2)^{\gamma-2}\sum_{p_3<x/(p_1p_2)}p_3^{\gamma-2}
               \nonumber \\
 \ll & \,\, \bigg(\sum_{x^{1/10.92}\leqslant p\leqslant x}p^{\gamma-2}\bigg)^2
            \ll\bigg(\sum_{x^{1/10.92}\leqslant m\leqslant x}m^{\gamma-2}\bigg)^2
            \ll x^{2(\gamma-1)/10.92}=o(1).
\end{align}
For the first term in (\ref{S_2-X-rewri}), we have
\begin{align}\label{S_2-X-main}
   & \,\, \sum_{\ell\in\mathscr{L}}\gamma\ell^{\gamma-1}
          =\gamma\sum_{e\in\mathscr{E}}\sum_{p_3<x/e}(ep_3-2)^{\gamma-1}
                \nonumber \\
 = & \,\, \gamma \sum_{x^{1/10.92}\leqslant p_1< x^{1/3.29}}\sum_{x^{1/3.29}\leqslant p_2<(x/p_1)^{1/2}}
          \sum_{2<p_3<x/(p_1p_2)}(p_1p_2p_3-2)^{\gamma-1}
                \nonumber \\
 = & \,\, \gamma\big(1+o(1)\big)\int_{x^{1/10.92}}^{x^{1/3.29}}\int_{x^{1/3.29}}^{(\frac{x}{t_1})^{1/2}}
          \int_{2}^{\frac{x}{t_1t_2}}
          \frac{(t_1t_2t_3-2)^{\gamma-1}}{(\log t_1)(\log t_2)(\log t_3)}\mathrm{d}t_3\mathrm{d}t_2\mathrm{d}t_1
                 \nonumber \\
 = & \,\, \gamma\big(1+o(1)\big)\int_{\frac{1}{10.92}}^{\frac{1}{3.29}}\frac{\mathrm{d}\alpha_1}{\alpha_1}
          \int_{\frac{1}{3.29}}^{\frac{1-\alpha_1}{2}}\frac{\mathrm{d}\alpha_2}{\alpha_2}
          \int_{\frac{\log 2}{\log x}}^{1-\alpha_1-\alpha_2}
          \frac{(x^{\alpha_1+\alpha_2+\alpha_3}-2)^{\gamma-1}}{\alpha_3}
          \cdot x^{\alpha_1+\alpha_2+\alpha_3}\mathrm{d}\alpha_3
                 \nonumber \\
 = & \,\, \gamma\big(1+o(1)\big)\int_{\frac{1}{10.92}}^{\frac{1}{3.29}}\frac{\mathrm{d}\alpha_1}{\alpha_1}
          \int_{\frac{1}{3.29}}^{\frac{1-\alpha_1}{2}}\frac{\mathrm{d}\alpha_2}{\alpha_2}
          \int_{\frac{\log 2}{\log x}}^{1-\alpha_1-\alpha_2}
          \frac{x^{(\alpha_1+\alpha_2+\alpha_3)\gamma}}{\alpha_3}\mathrm{d}\alpha_3.
\end{align}
For the innermost integral in (\ref{S_2-X-main}), we have
\begin{align}\label{inner-expli}
       \int_{\frac{\log 2}{\log x}}^{1-\alpha_1-\alpha_2}
       \frac{x^{(\alpha_1+\alpha_2+\alpha_3)\gamma}}{\alpha_3}\mathrm{d}\alpha_3
   = & \,\, \frac{1}{\gamma \log x}\int_{\frac{\log 2}{\log x}}^{1-\alpha_1-\alpha_2}
            \frac{1}{\alpha_3}\mathrm{d}x^{(\alpha_1+\alpha_2+\alpha_3)\gamma}
                  \nonumber \\
   = & \,\, \frac{1}{\gamma \log x}\bigg(\frac{x^\gamma}{1-\alpha_1-\alpha_2}
            +O\bigg(\frac{x^\gamma}{\log x}\bigg)\bigg)
                   \nonumber \\
   = & \,\, \frac{1}{1-\alpha_1-\alpha_2}\cdot\frac{x^\gamma}{\gamma\log x}(1+o(1)).
\end{align}
From (\ref{S_2-X-rewri}), (\ref{S_2-X-err-upp}), (\ref{S_2-X-main}) and (\ref{inner-expli}), we obtain
\begin{equation}\label{S_2-X-compu-num}
 \mathcal{X}=\frac{x^\gamma(1+o(1))}{\log x}\int_{\frac{1}{10.92}}^{\frac{1}{3.29}}
             \frac{\mathrm{d}\alpha_1}{\alpha_1}
          \int_{\frac{1}{3.29}}^{\frac{1-\alpha_1}{2}}\frac{\mathrm{d}\alpha_2}{\alpha_2(1-\alpha_1-\alpha_2)}.
\end{equation}
Combining (\ref{S_2-upper-1}), (\ref{S_B-upp}), (\ref{S_2-X-compu-num}), we derive that
\begin{equation}\label{S_2-upp-final}
S_2\leqslant\frac{4\mathfrak{S}}{\xi}\Bigg(\int_{\frac{1}{10.92}}^{\frac{1}{3.29}}
\frac{\mathrm{d}\alpha_1}{\alpha_1}\int_{\frac{1}{3.29}}^{\frac{1-\alpha_1}{2}}
\frac{\mathrm{d}\alpha_2}{\alpha_2(1-\alpha_1-\alpha_2)}\Bigg)\cdot\frac{x^\gamma}{\log^2x}(1+o(1)).
\end{equation}

\subsubsection{Upper Bound Estimate for $S_3$}

By the definition of $\varrho_3(a)$, we have
\begin{align*}
          S_3
 = & \,\, \sum_{\substack{a\in\mathscr{A}\\ (a,P(x^{1/10.92}))=1\\ (a,2)=1}}
          \sum_{\substack{a=p_1p_2p_3\\ x^{1/3.29}\leqslant p_1<p_2<p_3}}1
                \nonumber \\
 = & \,\, \sum_{x^{1/3.29}\leqslant p_1<x^{1/3}}\sum_{p_1<p_2<(x/p_1)^{1/2}}
          \sum_{\substack{p=p_1p_2p_3-2\\ p_2<p_3\leqslant x/(p_1p_2)\\ p_1p_2p_3=[k^{1/\gamma}]}}1.
\end{align*}
Define
\begin{align*}
         \mathscr{E}^*
 = & \,\,\Big\{e:\,e=p_1p_2,\,x^{1/3.29}\leqslant p_1<p_2<(x/p_1)^{1/2}\Big\},
                        \nonumber \\
         \mathscr{L}^*
 = & \,\,\Big\{\ell:\,\ell=ep-2,\,e\in\mathscr{E}^*,\,ep\leqslant x\Big\},
                        \nonumber \\
         \mathscr{B}^*
 = & \,\,\Big\{\ell:\ell\in\mathscr{L}^*,\, \ell+2=[k^{1/\gamma}]\,\,\textrm{for some}\,\,k\in\mathbb{N}^+\Big\}.
\end{align*}
Obviously, we have
\begin{equation*}
 \big|\mathscr{E}^*\big|\leqslant\sum_{x^{1/3.29}\leqslant p_1<x^{1/3}}\bigg(\frac{x}{p_1}\bigg)^{1/2}
 \ll\frac{x^{2/3}}{\log x}<x^{2/3}.
\end{equation*}
In addition, for $e\in\mathscr{E}^*$, one has
\begin{equation*}
\begin{cases}
e=p_1p_2>p_1^2\geqslant x^{0.607}, \\
e=p_1p_2<p_1(x/p_1)^{1/2}=x^{1/2}p_1^{1/2}<x^{2/3}.
\end{cases}
\end{equation*}
Similar to the illustration in (\ref{L-num}), it is easy to see that the number of the elements in $\mathscr{L}^*$, which are not exceeding $x^{0.607}$, is less than $x^{2/3}$. Accordingly, we obtain
\begin{align}\label{S_3-upper-1}
S_3 \leqslant & \,\, S(\mathscr{B}^*,\mathscr{P},x^{0.607})+O(x^{2/3})
                       \nonumber \\
\leqslant & \,\, S(\mathscr{B}^*,\mathscr{P},z)+O(x^{2/3})
\end{align}
holds for $z\leqslant x^{0.607}$.  Let
$\mathscr{B}^*_d=\big\{\ell: \ell\in\mathscr{B}^*, \ell\equiv0\!\pmod d\big\}$. Then it is easy to see that
\begin{equation*}
   \big|\mathscr{B}^*_d\big|=\frac{1}{\varphi(d)}\mathcal{X}^*+\mathscr{R}_d^{*(1)}+\mathscr{R}_d^{*(2)},
\end{equation*}
where
\begin{equation*}
  \mathcal{X}^*=\sum_{\ell\in\mathscr{L}^*}\big((\ell+3)^\gamma-(\ell+2)^\gamma\big),
\end{equation*}
\begin{equation}\label{R_d^*(1)-def}
  \mathscr{R}_d^{*(1)}=\sum_{\substack{\ell\in\mathscr{L}^*\\ \ell\equiv0\!\!\!\!\!\pmod d}}
  \big((\ell+3)^\gamma-(\ell+2)^\gamma\big)
  -\frac{1}{\varphi(d)}\sum_{\substack{\ell\in\mathscr{L}^*}}\big((\ell+3)^\gamma-(\ell+2)^\gamma\big),
\end{equation}
\begin{equation}\label{R_d^*(2)-def}
  \mathscr{R}_d^{*(2)}=\sum_{\substack{\ell\in\mathscr{L}^*\\ \ell\equiv0\!\!\!\!\!\pmod d}}
  \Big(\psi\big(-(\ell+3)^\gamma\big)-\psi\big(-(\ell+2)^\gamma\big)\Big).
\end{equation}
Moreover, one can follow the same process of Lemma \ref{R_1-err-mean} and Lemma \ref{R_2-err-mean}
to establish that
\begin{equation}\label{S_3-err-condi}
  \sum_{\substack{d\leqslant x^\xi\\ (d,2)=1}}\Big|\mathscr{R}_d^{*(i)}\Big|\ll \frac{x^\gamma}{(\log x)^{A}},
  \qquad i=1,2.
\end{equation}
Now, we describe the quantity $\mathcal{X}^*$ with explicit coefficient and definite magnitude.
We write
\begin{equation}\label{S_3-X-rewri}
\mathcal{X}^*=\sum_{\ell\in\mathscr{L}^*}\gamma\ell^{\gamma-1}+\sum_{\ell\in\mathscr{L}^*}
\Big((\ell+3)^\gamma-(\ell+2)^\gamma-\gamma\ell^{\gamma-1}\Big).
\end{equation}
For the second term in (\ref{S_3-X-rewri}), one has
\begin{align}\label{S_3-X-err-upp}
    & \,\, \sum_{\ell\in\mathscr{L}^*}\Big((\ell+3)^\gamma-(\ell+2)^\gamma-\gamma\ell^{\gamma-1}\Big)
           \ll\sum_{\ell\in\mathscr{L}^*}\ell^{\gamma-2}
               \nonumber \\
 \ll & \,\, \sum_{\substack{x^{1/3.29}\leqslant p_1<p_2<(x/p_1)^{1/2}\\ p_3<x/(p_1p_2)}}
            (p_1p_2p_3-2)^{\gamma-2}
              \ll \sum_{\substack{x^{1/3.29}\leqslant p_1<p_2<(x/p_1)^{1/2}}}
            (p_1p_2)^{\gamma-2}\sum_{p_3<x/(p_1p_2)}p_3^{\gamma-2}
               \nonumber \\
 \ll & \,\, \bigg(\sum_{x^{1/3.29}\leqslant p\leqslant x}p^{\gamma-2}\bigg)^2
            \ll\bigg(\sum_{x^{1/3.29}\leqslant m\leqslant x}m^{\gamma-2}\bigg)^2
            \ll x^{2(\gamma-1)/3.29}=o(1).
\end{align}
For the first term in (\ref{S_3-X-rewri}), we have
\begin{align}\label{S_3-X-main}
   & \,\, \sum_{\ell\in\mathscr{L}^*}\gamma\ell^{\gamma-1}
          =\gamma\sum_{e\in\mathscr{E}^*}\sum_{p_3<x/e}(ep_3-2)^{\gamma-1}
                \nonumber \\
 = & \,\, \gamma\sum_{x^{1/3.29}\leqslant p_1< x^{1/3}}\sum_{p_1<p_2<(x/p_1)^{1/2}}
          \sum_{2<p_3<x/(p_1p_2)}(p_1p_2p_3-2)^{\gamma-1}
                \nonumber \\
 = & \,\, \gamma\big(1+o(1)\big)\int_{x^{1/3.29}}^{x^{1/3}}\int_{t_1}^{(\frac{x}{t_1})^{1/2}}
          \int_{2}^{\frac{x}{t_1t_2}}
          \frac{(t_1t_2t_3-2)^{\gamma-1}}{(\log t_1)(\log t_2)(\log t_3)}\mathrm{d}t_3\mathrm{d}t_2\mathrm{d}t_1
                 \nonumber \\
 = & \,\, \gamma\big(1+o(1)\big)\int_{\frac{1}{3.29}}^{\frac{1}{3}}\frac{\mathrm{d}\alpha_1}{\alpha_1}
          \int_{\alpha_1}^{\frac{1-\alpha_1}{2}}\frac{\mathrm{d}\alpha_2}{\alpha_2}
          \int_{\frac{\log 2}{\log x}}^{1-\alpha_1-\alpha_2}
          \frac{(x^{\alpha_1+\alpha_2+\alpha_3}-2)^{\gamma-1}}{\alpha_3}
          \cdot x^{\alpha_1+\alpha_2+\alpha_3}\mathrm{d}\alpha_3
                 \nonumber \\
 = & \,\, \gamma\big(1+o(1)\big)\int_{\frac{1}{3.29}}^{\frac{1}{3}}\frac{\mathrm{d}\alpha_1}{\alpha_1}
          \int_{\alpha_1}^{\frac{1-\alpha_1}{2}}\frac{\mathrm{d}\alpha_2}{\alpha_2}
          \int_{\frac{\log 2}{\log x}}^{1-\alpha_1-\alpha_2}
          \frac{x^{(\alpha_1+\alpha_2+\alpha_3)\gamma}}{\alpha_3}\mathrm{d}\alpha_3
                 \nonumber \\
 = & \,\, \frac{x^\gamma(1+o(1))}{\log x}\int_{\frac{1}{3.29}}^{\frac{1}{3}}
          \frac{\mathrm{d}\alpha_1}{\alpha_1}
          \int_{\alpha_1}^{\frac{1-\alpha_1}{2}}\frac{\mathrm{d}\alpha_2}{\alpha_2(1-\alpha_1-\alpha_2)},
\end{align}
which combined with (\ref{S_3-X-err-upp}) gives
\begin{equation}\label{S_3-X-compu-num}
 \mathcal{X}^*=\frac{x^\gamma(1+o(1))}{\log x}\int_{\frac{1}{3.29}}^{\frac{1}{3}}
             \frac{\mathrm{d}\alpha_1}{\alpha_1}
          \int_{\alpha_1}^{\frac{1-\alpha_1}{2}}\frac{\mathrm{d}\alpha_2}{\alpha_2(1-\alpha_1-\alpha_2)}.
\end{equation}
Taking $z=x^{\xi/3}$ in (\ref{S_3-upper-1}), it follows from (\ref{upper-sieve}), (\ref{S_3-err-condi}) and
(\ref{S_3-X-compu-num}) that
\begin{align}\label{S_3-upp-final}
                  S_3
 \leqslant & \,\, S(\mathscr{B}^*,\mathscr{P},x^{\xi/3})+O(x^{2/3})
 \leqslant  \mathcal{X}^*V(x^{\xi/3})F(3)(1+o(1))
                  \nonumber \\
 \leqslant & \,\, \frac{4\mathfrak{S}}{\xi}\Bigg(\int_{\frac{1}{3.29}}^{\frac{1}{3}}
 \frac{\mathrm{d}\alpha_1}{\alpha_1}\int_{\alpha_1}^{\frac{1-\alpha_1}{2}}
 \frac{\mathrm{d}\alpha_2}{\alpha_2(1-\alpha_1-\alpha_2)}\Bigg)\cdot\frac{x^\gamma}{\log^2x}(1+o(1)).
\end{align}

\subsubsection{Proof of Theorem \ref{Theorem-2}}

It follows from (\ref{lower-tran-2}), (\ref{S-lower-final}), (\ref{S_1-upper-final}), (\ref{S_2-upp-final})
and (\ref{S_3-upp-final}) that
\begin{align*}
          & \,\, \big|\big\{a:a\in\mathscr{A},\,\,a=\mathcal{P}_2\big\}\big|
                      \nonumber \\
\geqslant & \,\,\frac{\mathfrak{S}x^\gamma(1+o(1))}{\log^2x}\Bigg\{\frac{4}{\xi}\bigg(\log(10.92\xi-1)
        +\int_3^{10.92\xi-1}\frac{\mathrm{d}t_1}{t_1}\int_2^{t_1-1}\frac{\log(t_2-1)}{t_2}\mathrm{d}t_2\bigg)
                      \nonumber \\
  & \,\, -2\Bigg(\int_{\xi-\frac{3}{10.92}}^{\frac{1}{3.29}}
          \frac{\mathrm{d}\alpha}{\alpha(\xi-\alpha)}
                     +\int_{\frac{1}{10.92}}^{\xi-\frac{3}{10.92}}
          \frac{1}{\alpha(\xi-\alpha)}\bigg(1+\int_{2}^{10.92(\xi-\alpha)-1}\frac{\log(\beta-1)}{\beta}
          \mathrm{d}\beta\bigg)\mathrm{d}\alpha\Bigg)
                      \nonumber \\
  & \,\, -\frac{2}{\xi}\Bigg(\int_{\frac{1}{10.92}}^{\frac{1}{3.29}}
         \frac{\mathrm{d}\alpha_1}{\alpha_1}\int_{\frac{1}{3.29}}^{\frac{1-\alpha_1}{2}}
         \frac{\mathrm{d}\alpha_2}{\alpha_2(1-\alpha_1-\alpha_2)}\Bigg)
      -\frac{4}{\xi}\Bigg(\int_{\frac{1}{3.29}}^{\frac{1}{3}}
      \frac{\mathrm{d}\alpha_1}{\alpha_1}\int_{\alpha_1}^{\frac{1-\alpha_1}{2}}
      \frac{\mathrm{d}\alpha_2}{\alpha_2(1-\alpha_1-\alpha_2)}\Bigg)\Bigg\}.
\end{align*}
By simple numerical calculations, it is easy to see that, for
\begin{equation*}
1-\frac{0.03208}{2^{38}+17}<\gamma<1,
\end{equation*}
there holds $\xi\geqslant0.47284$ and the number in the above brackets $\{\cdot\}$ is
$\geqslant0.000109508$, which completes the proof of Theorem \ref{Theorem-2}.

\section*{Acknowledgement}

The authors would like to appreciate the referee for his/her patience in refereeing this paper.
This work is supported by the National Natural Science Foundation of China (Grant Nos. 11901566, 12001047, 11971476, 12071238), and the Fundamental Research Funds for the Central Universities (Grant No. 2022YQLX05).

\end{document}